\documentclass[11pt]{amsart}
\usepackage{setspace}
\usepackage{graphicx}
\usepackage{geometry}
\usepackage{multirow}
\usepackage{commath}
\usepackage{mdframed}
\usepackage{amsmath, amsthm}
\usepackage{thmtools}
\usepackage{bbm}
\usepackage{amssymb}
\usepackage{array}
\newcolumntype{M}[1]{>{\centering\arraybackslash}m{#1}}
\usepackage{mathtools}

\usepackage{amsthm}
\usepackage{float}
\usepackage{caption}
\usepackage{color}
\usepackage{enumerate}
\usepackage{tikz}
\usepackage{hyperref}
\hypersetup{
   colorlinks = true, 
   linkcolor = blue, 
   filecolor = magenta,      
   urlcolor = black, 
}
\usepackage{url}
\expandafter\def\expandafter\UrlBreaks\expandafter{\UrlBreaks
  \do\a\do\b\do\c\do\d\do\e\do\f\do\g\do\h\do\i\do\j
  \do\k\do\l\do\m\do\n\do\o\do\p\do\q\do\r\do\s\do\t
  \do\u\do\v\do\w\do\x\do\y\do\z\do\A\do\B\do\C\do\D
  \do\E\do\F\do\G\do\H\do\I\do\J\do\K\do\L\do\M\do\N
  \do\O\do\P\do\Q\do\R\do\S\do\T\do\U\do\V\do\W\do\X
  \do\Y\do\Z}
\usepackage{setspace}
\usepackage{systeme}
\usepackage{tabularx}
\usepackage{cite}
\usepackage{etoolbox}
\usepackage[british]{babel}
\newtheorem{theorem}{Theorem}[section]
\newtheorem{lemma}[theorem]{Lemma}
\newtheorem{cor}[theorem]{Corollary}

\newtheorem{question}[theorem]{Question}

\newtheorem{fact}[theorem]{Fact}

\newtheorem{prop}[theorem]{Proposition}

\newtheorem*{claim*}{Claim}
\newtheorem*{theorem*}{Theorem}
\newtheorem*{prop*}{Proposition}
\newtheorem*{lemma*}{Lemma}
\newtheorem*{keyobservation*}{Key Observation}
\newtheorem*{conjecture*}{Conjecture}
\newtheorem*{mainthm*}{Main Theorem}
\numberwithin{equation}{section}
\theoremstyle{definition}
\newtheorem{notation}[theorem]{Notation}
\newtheorem*{notation*}{Notation}
\newtheorem{defn}[theorem]{Definition}

\newtheorem{remark}[theorem]{Remark}

\newcommand{\tp}[0]{\mathrm{tp}}
\newcommand{\R}[0]{\mathbb{R}}
\newcommand{\Q}[0]{\mathbb{Q}}
\newcommand{\N}[0]{\mathbb{N}}
\newcommand{\Z}[0]{\mathbb{Z}}
\newcommand{\M}[0]{\mathcal{M}}
\newcommand{\Th}[0]{\text{Th}}
\newcommand{\Sym}[0]{\text{Sym}}
\newcommand{\ar}[0]{\text{ar}}
\newcommand{\monster}[0]{U}

\newcommand{\globalext}[1]{{{#1}\upharpoonright^\monster}}
\def\indep{\mathrel{\raise0.2ex\hbox{\ooalign{\hidewidth$\vert$\hidewidth\cr\raise-0.9ex\hbox{$\smile$}}}}}
\counterwithout{equation}{section}
\subjclass[2020]{03C40, 03C45.}
\keywords{Distality, triviality, arity, reducts}
\author{Mervyn Tong}
\address{Department of Pure Mathematics and Mathematical Statistics, Centre for Mathematical Sciences, Wilberforce Road, Cambridge CB3 0WB, United Kingdom}
\email{hwmt3@cam.ac.uk}
\title{Higher-arity distality and forking triviality}
\date{\today}
\begin{document}
\maketitle
\begin{abstract}
Answering a question of Goode, we show that $k$-triviality collapses to (1-)triviality among simple theories. In particular, every stable theory with quantifier elimination in a relational language of bounded arity is trivial.

We use our collapse result, along with other facts about $k$-triviality and $k$-total triviality, to generate examples of (strongly) $k$-distal theories. The collapse result immediately implies that no stable theory can be strictly $k$-distal for some $k\geq 3$, partially answering a question of Walker. Moreover, all known examples of non-distal (strongly) $k$-distal theories are $k$-ary, rendering (strong) $k$-distality moot as a $(k+1)$-ary dividing line; we give four classes of examples that are not $k$-ary. We also show that just as distality is not preserved under taking reducts, neither is (strong) $k$-distality.
\end{abstract}
\section{Introduction}
Model-theoretic classification theory has classically been centred on binary dividing lines: properties characterised by tameness for binary (partitioned) formulas. This is the case for NIP, which says that the graph of every binary formula omits some finite bipartite graph as an induced subgraph; for stability, which says that some finite half-graph is omitted in the above; as well as for distality, which says that the graph of every binary formula admits a useful kind of cell decomposition.

Recently, a programme has begun to develop higher-arity versions of these dividing lines. Indeed, higher-arity versions of NIP \cite{shelah1, shelah2, cpt, nadja1, nadja2, nadja3, terry2, chernikovtowsner} and stability \cite{terry1, terry2, terry3, chernikovtowsnerstability} have been studied in much depth, and many instructive examples of theories satisfying these notions have been found. Higher-arity distality has received less attention. Walker \cite{walker, walkerthesis} introduced the $(k+1)$-ary dividing lines of $k$-distality and strong $k$-distality, with 1-distality and strong 1-distality both equivalent to distality. No further work on these notions had emerged in the literature until very recently in \cite{artemfrancis}.

In particular, few examples of (strongly) $k$-distal theories are known that illuminate the nature of this property as a $(k+1)$-ary dividing line. The main goal of this paper is to supply these (although, as we shall see, there are some surprising by-products). Arguably the most notable omission is that every known example of a non-distal (strongly) $k$-distal theory is $k$-ary: it admits quantifier elimination in a $k$-ary relational language. It is not hard to see that every $k$-ary theory is (strongly) $k$-distal, but such examples are not instructive. Indeed, as a $(k+1)$-ary dividing line, (strong) $k$-distality is characterised by tameness for $(k+1)$-ary formulas, but in a $k$-ary theory, every $(k+1)$-ary formula is degenerate, in the sense that it is a Boolean combination of $k$-ary formulas.

We provide four classes of examples of non-distal (strongly) $k$-distal theories that are not $k$-ary; in fact, we show that they can be arbitrarily far from being $k$-ary. Say that a theory is \textit{strictly} (strongly) $k$-distal if it is (strongly) $k$-distal and not (strongly) $(k-1)$-distal.
\begin{prop}[Section \ref{subsecarity}]
    Let $k\in\N^+$.
    \begin{enumerate}[(i)]
        \item The theory of Goode's $(k+1)$-sorted labelled free pseudoplane is strictly 2-distal, strictly strongly $(2^k+1)$-distal, and not $(2^{k+1}-1)$-ary (hence not $(2^k+1)$-ary).
        \item The theory of the universal homogeneous kay-graph is strictly (strongly) $k$-distal and not $k$-ary.
        \item The theory of the Johnson graph $\mathfrak{J}(k)$ is strictly (strongly) 2-distal and not $g(k)$-ary, for some increasing unbounded function $g:\N^+\to\N^+$ with $g(2)\geq 2$.
        \item The theory of a permutation structure of Cherlin and Lachlan is strictly (strongly) 2-distal and not $l$-ary for any $l\in\N^+$.
    \end{enumerate}
\end{prop}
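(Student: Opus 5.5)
The proof splits into two halves for each of the four classes: a distality computation and a non-arity computation. For the distality half I would use the tools the abstract announces. First, the collapse of $k$-triviality to triviality in simple theories. Second, the facts relating $k$-triviality and $k$-total triviality to (strong) $k$-distality. Concretely, I expect a criterion of the following shape: a stable (or simple) theory whose forking is $k$-totally trivial, or trivial of a suitable kind, is strongly $(k+1)$-distal, or $k$-distal in Walker's sense. Strictness then comes from exhibiting an indiscernible configuration witnessing failure of $(k-1)$-distality. In a stable theory this is typically a non-constant indiscernible sequence over a set; its existence shows the theory is not distal. Higher failures come from a $(k-1)$-ary configuration in which a $k$-ary formula cuts the sequence nontrivially.

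\textbf{(i) Goode's pseudoplane.} This theory is stable and trivial but not totally trivial. I would show it is $2$-distal by directly checking Walker's definition via the forking triviality facts. Failure of distality is immediate from stability plus infinite models. Strong $(2^k+1)$-distality and its strictness I would obtain by computing the exact level of total triviality of the $(k+1)$-sorted labelled structure. The labels along paths of length up to $k$ through the sorts give $2^k$ possible label-patterns, and these produce a witness of failure at level $2^k$.

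\textbf{(ii)–(iv) Kay-graph, Johnson graph, permutation structure.} The universal homogeneous kay-graph is simple; the Johnson graph and the Cherlin–Lachlan permutation structures are stable and homogeneous with trivial forking. Here I would apply the collapse theorem and the distality criterion. Strictness comes from a generic configuration that a $(k-1)$-ary relation cannot detect. For the kay-graph this is the defining $k$-ary edge relation on an indiscernible array. For (iii) and (iv) it is simply non-distality.

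\textbf{Non-arity.} I would prove that a theory is not $l$-ary by exhibiting two $(l+1)$-tuples that have the same type on every $l$-subtuple but different complete types. For the pseudoplane, build two finite trees on $2^{k+1}$ points whose proper subconfigurations embed isomorphically but which differ in whether a path closes up through the labelled sorts. For the kay-graph, use an edge count parity on $(k+1)$-sets, which is not determined by $k$-subsets. For the Johnson graph $\mathfrak{J}(k)$, vertices are $k$-sets, and the intersection pattern of many sets is not determined by pairs; $g(k)$ grows with the number of sets needed to encode $k$-element intersections. For the Cherlin–Lachlan structure, I would use that it is not finitely homogeneous in any bounded relational language, i.e.\ it has infinitely many "irreducible" types, and then use a compactness argument.

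The main obstacle will be the exact computations in (i). Pinning down both the strict strong distality level $2^k+1$ and the non-arity bound $2^{k+1}-1$ requires a careful combinatorial analysis of forking and type-amalgamation in the labelled pseudoplane. I would first try to characterise types there by labelled path-distances in the tree.
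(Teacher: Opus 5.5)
\textbf{The step that fails is the positive half of (ii).} You plan to get strong $k$-distality of the kay-graph from Theorem \ref{collapse} plus a forking-to-distality criterion. But the only such bridges, Fact \ref{walkerequiv} and Fact \ref{artemfrancis}, are for \emph{stable} theories, and the kay-graph is unstable: it keeps the independence property of the random $k$-hypergraph. More decisively, its forking carries no information at level $k$. It is an $\omega$-categorical reduct of the random hypergraph, of SU-rank $1$ with trivial algebraic closure. Hence $a\indep_C B$ iff $a\cap B\subseteq C$, and the theory is totally trivial. A simple-theory version of your criterion would therefore make it strongly $2$-distal, contradicting the very strictness you want when $k\ge 3$.

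What is needed is a direct check, as the paper does. Write $\bar b=(b_1,\ldots,b_k)$. By quantifier elimination in $\{R\}$ it suffices to show $\mathbbm{1}_R(a,\bar b)=\mathbbm{1}_R(x,\bar b)$ for $x\in I$. Expanding $R$ in terms of $E$ gives, for any $y$,
$\mathbbm{1}_R(y,\bar b)+\mathbbm{1}_R(x,\bar b)\equiv\sum_{i=1}^k\mathbbm{1}_R(x,y,b_{\neq i})\pmod 2$,
since each term $\mathbbm{1}_E(x,y,b_{\neq i,j})$ occurs twice. For $y\in J$ the left side is $\equiv 0$ by $\bar b$-indiscernibility of $I+J$. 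Each summand on the right mentions only $b_{\neq i}$, so $b_{\neq i}$-indiscernibility of $I+a+J$ lets you replace $y$ by $a$. Your strictness witness (a single non-edge $(v_1,\ldots,v_k)$ inside an otherwise complete configuration) is the right one.

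\textbf{There are further holes.}

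For (iii) and (iv), Theorem \ref{collapse} yields at most triviality, hence $2$-distality. The ``strongly'' half needs \emph{total} triviality, which does not collapse in general (Goode's $T_n$). The paper gets it from finite $U$-rank of $\omega$-categorical $\omega$-stable theories together with Proposition \ref{collapsefiniterank}; in (iii) it starts from Proposition \ref{palacin}(ii).

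In (iv) you merely assert that forking is trivial. Since $\ar(T)=\omega$, neither Proposition \ref{palacin} nor Corollary \ref{corkary} applies, so this is the crux. The paper argues as follows: the theory is $1$-based; a non-trivial $1$-based $\omega$-categorical theory interprets an infinite vector space; $M$ is a reduct of a structure homogeneous in a finite binary language; and such structures interpret no infinite group.

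Your non-arity arguments are also incomplete:
\begin{itemize}
\item In (iv) you presuppose unbounded arity. Explicit witnesses exist: the cyclic tuple $m_i=\{\{a_i,a_{i+1}\},\{b_i,b_{i+1}\}\}$ versus the same tuple with $m_{k+1}$ replaced by $\{\{a_1,b_{k+1}\},\{b_1,a_{k+1}\}\}$.
\item In (iii), non-determination by pairs only gives $\ar(T_k)\ge 3$. Unboundedness requires constructing, for each $l$, two $l$-families of sets of a common size with equal $j$-fold intersection sizes for all $j<l$, but $l$-fold intersections of sizes $1$ and $0$. The paper does this by padding the $(l-p)$-fold intersections with fresh points, inductively on $p$.
\item In (i) the mechanism is not a path closing up but a recursive encoding. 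By freeness, $y_{i+2^n}=x_i\ast y_i$ determines $x_i$, so $\phi_{n+1}$ says the labels in $X_n$ satisfy $\phi_n$, starting from $\phi_1:=(x_1=x_2)$. Dropping any one coordinate forgets one label, after which automorphisms of the lower sorts extend.
\end{itemize}

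Finally, the exact total-triviality level in (i) need not be recomputed: it is Theorem \ref{goodeclassification}.
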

\begin{remark}
    Although there are (superstable) strongly $k$-distal theories that are not $k$-ary, we will show in forthcoming work that every NIP strongly $k$-distal theory is `locally $k$-ary', in the sense that every $(k+1)$-ary formula is `locally determined' by a conjunction of $k$-ary formulas.
\end{remark}

We also show that just as the reduct of a distal theory need not be distal, the same holds for (strongly) $k$-distal theories.
\begin{prop}[Proposition \ref{reducts}]
    For all $k\in\N^+$, $k$-distality and strong $k$-distality are not necessarily preserved under taking reducts.
\end{prop}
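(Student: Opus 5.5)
The plan is to exhibit, for each $k\in\N^+$, a (strongly) $k$-distal theory $T$ with a reduct $T'$ that is not (strongly) $k$-distal. The easiest source of (strongly) $k$-distal theories is the observation from the introduction that every $k$-ary theory is (strongly) $k$-distal. For the failure in the reduct, I will use a non-distality witness that survives at every arity, namely an indiscernible sequence that is \emph{totally} indiscernible but non-constant. Such sequences violate $k$-distality for every $k$, because one can insert a new element among the $k$ cuts in a way that destroys indiscernibility. I would check this against Walker's definition before relying on it, since it is the crux of the whole argument.

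\textbf{Construction.} Start from a known non-distal structure whose theory fails (strong) $k$-distality for all $k$. The most natural candidate is one where a single formula carries an infinite totally indiscernible set with algebraic behaviour, for instance a pure infinite set equipped with a binary relation $E$ that is an equivalence relation with two infinite classes. To get a concrete and uniform-in-$k$ example, I would take $T'$ to be the theory of the random $(k+1)$-uniform hypergraph, or alternatively its generic variant. My expectation is that this theory is not (strongly) $k$-distal, because the random $(k+1)$-hypergraph should be the prototypical failure of $(k+1)$-ary tameness; I would verify this directly by exhibiting indiscernible sequences whose $k$ cuts can be filled so that the hyperedge relation among $k$ sequence elements and the new elements varies.

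For the expansion, add a dense linear order $<$ together with enough structure to make the combined theory distal; distal theories are (strongly) $k$-distal for all $k$. A concrete route: realise the random $(k+1)$-hypergraph as a reduct of a distal structure, for example by coding the hyperedge relation definably in an o-minimal or Presburger-like setting. The cleanest approach is to take a random hypergraph $R$ on $\Q$ and observe that $(\Q,<,R)$ is not distal, so that does not work directly. Instead I would use that every countable structure is interpretable in, or a reduct of, a structure with a definable well-behaved order only in degenerate ways. Failing that, I would fall back on the simplest known example: distality of $\mathrm{Th}(\Q,<,+)$ versus its reduct to a ternary relation.

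\textbf{Main obstacle.} The main difficulty is finding a single expansion that is provably (strongly) $k$-distal while its reduct provably is not. My first attempt would be to use the paper's own non-$k$-ary examples: take $T$ to be the theory of the universal homogeneous kay-graph, which is (strongly) $k$-distal by item (ii) of the proposition above. I would then expand it by naming one more structure so that a reduct forgets the part making it tame. Dually, I would take a $k$-ary expansion of the random $(k+1)$-hypergraph, since $k$-ary theories are (strongly) $k$-distal. For example, I would take a $(k+1)$-sorted presentation in which the hyperedge relation is coded by binary functions or relations to auxiliary sorts. I expect the coding through auxiliary sorts, which keeps the expansion $k$-ary (with quantifier elimination) while its reduct is the random $(k+1)$-hypergraph, to be the key step.
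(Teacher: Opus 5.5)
\textbf{There is a genuine gap.} Your proposal never produces a verified pair consisting of a (strongly) $k$-distal theory and a reduct that fails $k$-distality, and the mechanisms it relies on do not work.

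\emph{The witnessing mechanism is false.} A non-constant totally indiscernible sequence does not witness failure of $k$-distality for $k\ge 2$. The pure set, and the equivalence relation with two infinite classes that you propose, are binary, hence strongly $2$-distal by Proposition \ref{k-aryisstronglyk-distal}. More generally, every stable trivial theory is $2$-distal by Fact \ref{walkerequiv}, yet every stable theory has such sequences.

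\emph{The random hypergraph cannot be your reduct.} The random $(k+1)$-uniform hypergraph is indeed not $k$-distal, but it fits neither of your schemes.
\begin{itemize}
\item It has IP (fixing $k-1$ parameters yields a random graph), so it is not a reduct of any distal, or even NIP, theory.
\item It is not a reduct of any $k$-ary theory, however you code through auxiliary sorts, so your proposed key step is impossible. Suppose $E(x_1,\dots,x_k,z)$ were a Boolean combination of finitely many formulas, each in at most $k$ of the variables. Then for a finite set $X$ and any $c$, the link $\{\bar x\in X^{(k)}: E(\bar x,c)\}$ would be determined by data independent of $c$ together with $O(|X|^{k-1})$ bits depending on $c$. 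But the extension axioms force all $2^{\binom{|X|}{k}}$ $k$-uniform hypergraphs on $X$ to occur as links.
\end{itemize}
The kay-graph suggestion does not identify any reduct that fails $k$-distality.

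\emph{The missing idea} is a uniform way to certify that the reduct fails $k$-distality for every $k$. The paper gets this from forking. For stable theories, Fact \ref{walkerequiv} turns failure of $(k+1)$-distality into non-$k$-triviality, i.e.\ the existence of a $(k+2)$-cycle. The paper then takes Evans' generic digraph theory $T_1'$, which is stable, $1$-based and trivial. By Goode it is totally trivial, hence strongly $2$-distal by Fact \ref{artemfrancis}. Its reduct $T_1$, obtained by forgetting orientation, is not $k$-trivial for any $k$ (by Evans, or by Theorem \ref{collapse} from non-triviality).

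\emph{Your fallback can be completed along these lines} if the ternary relation is the graph of addition.
\begin{itemize}
\item $(\Q,<,+)$ is o-minimal, hence distal, hence strongly $k$-distal for every $k$.
\item In the stable reduct $(\Q,+)$, take $\Q$-linearly independent $a_1,\dots,a_{k+1}$. Then $(a_1,\dots,a_{k+1},a_1+\cdots+a_{k+1})$ is a $(k+2)$-cycle over $\emptyset$. So $(\Q,+)$ is not $(k+1)$-distal by Fact \ref{walkerequiv}, and it is not distal since it is stable.
\end{itemize}
This route is simpler than the paper's, though it loses the feature that the expansion is stable. As written, however, you give no reason why this reduct fails, and the total-indiscernibility mechanism you propose cannot supply one.
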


We note in passing that many examples of (strongly) $k$-distal theories, although not $k$-ary, are \textit{reducts} of $k$-ary theories, and we are unable to ascertain the precise relationship between these two properties --- see Section \ref{subsecreducts}.

Perhaps surprisingly, our proof methods rely heavily on a connection with forking triviality, as studied, for instance, in \cite{trivialpursuit,goode}. Say that a simple theory $T$ is \textit{trivial} if a set of tuples is (forking) independent over a base $D$ whenever they are pairwise independent over $D$. When $T$ is strongly minimal, this is equivalent to $T$ having \textit{disintegrated} pregeometry, i.e., where $\text{acl}(A)=\bigcup_{a\in A}\text{acl}(a)$. As such, it is a central notion of geometric simplicity closely connected to the nonexistence of a definable group, as per Zilber's trichotomy principle.

In \cite{goode}, Goode extended the notion of triviality to that of \textit{$k$-triviality}. Say that a simple theory $T$ is \textit{$k$-trivial} if a set of tuples is independent over a base $D$ whenever any $k+1$ of them are independent over $D$ (so that 1-triviality is equivalent to triviality). Walker \cite[Theorem 8.16]{walker} showed that, among stable theories, $k$-triviality is equivalent to $(k+1)$-distality.

Aside from $k$-triviality, Goode also introduced the stronger notion of \textit{$k$-total triviality}. Recently, Chernikov and Westhead showed the following equivalence.
\patchcmd{\thmhead}{(#3)}{#3}{}{}
\begin{prop}[{\cite[Proposition 4.16]{artemfrancis}}]
    Let $k\in\N^+$. Among stable theories, $k$-total triviality is equivalent to strong $(k+1)$-distality.
\end{prop}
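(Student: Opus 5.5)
This is a cited result of Chernikov and Westhead; I would prove it by unwinding both definitions in a stable theory and matching them through the standard translation of indiscernible sequences into independence. Throughout, work in the monster model of a stable $T$. Strong $(k+1)$-distality is a condition about indiscernible sequences $I=I_1+I_2$ and tuples $b_1,\dots,b_{k+1}$ inserted at a cut: if each $k$-element subfamily of the $b_j$'s can be inserted keeping indiscernibility, then all of them can, even in a version where the sequence is indiscernible over a parameter set $A$. Goode's $k$-total triviality says roughly: for all $D$ and tuples $a,b_1,\dots,b_{k}$, if $a\indep_D b_j$ for each $j$ and the family is suitably independent in $k$-sized pieces, then $a\indep_D b_1\dots b_k$. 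It also has the ``total'' feature that this persists after enlarging the base, i.e.\ it holds over arbitrary $D$ rather than only $\emptyset$-definable configurations. The plan is to show that ``$b_j$ can be inserted into $I$ at the cut, keeping $I$ indiscernible over $A$'' corresponds, in a stable theory, to ``$b_j\indep_{D} I$'' for a suitable base $D$, and then to read each definition as the other.

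\textbf{Step 1: the dictionary.} Let $I$ be indiscernible and long, and let $p$ be the average type of $I$ at the cut (the global limit type, definable by stability). In a stable theory, a tuple $c$ can be inserted at the cut of $I$ over $A$ exactly when $c$ realises the appropriate restriction of $p$. Equivalently, let $D$ be the part of $I$ to the left of the cut (or a Morley sequence generating $p$). Then insertability says $\tp(c/DA)$ agrees with the nonforking extension of the limit type, and the right-hand piece $I_2$ is independent from $c$ over $D$. I would prove this using stationarity of types over models: replace the base by a model containing $I_1$, or use that $I$ is a Morley sequence over $\mathrm{acl}^{eq}$ of its initial segment. This converts simultaneous insertion of $b_1,\dots,b_{k+1}$ into independence of $I_2$ from $b_1\dots b_{k+1}$ over the base. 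Insertion of each $k$-subset converts into independence of $I_2$ from each $k$-subfamily.

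\textbf{Step 2: the two directions.} First assume $k$-total triviality and let the insertion hypotheses of strong $(k+1)$-distality hold. By Step 1 we get $I_2\indep_D b_J$ for each $|J|=k$. Total triviality, applied with $a=I_2$ and the $b_j$'s, gives $I_2\indep_D b_1\dots b_{k+1}$. Translating back yields simultaneous insertability. For the converse, suppose $k$-total triviality fails, witnessed by $D,a,b_1,\dots,b_{k+1}$. Build a Morley sequence in $\tp(a/D)$ (more precisely, in the nonforking extension over $Db_1\dots b_{k+1}$ restricted suitably), with $a$ placed as its element after the cut. By Step 1 each $k$-subfamily of the $b_j$'s can be inserted, but the whole family cannot, so strong $(k+1)$-distality fails. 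The index shift ($k$ versus $k+1$) comes from the sequence itself playing the role of the extra coordinate.

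\textbf{Main obstacle.} I expect the hard part to be Step 1 in the presence of the parameter set $A$ and the exact quantifier pattern of Goode's definition. Total triviality asks for independence over arbitrary bases, including ones that are not algebraically closed. Matching this precisely to indiscernibility over $A$ needs care with $\mathrm{acl}^{eq}$ and stationarity: forking-independence is insensitive to replacing $D$ by $\mathrm{acl}^{eq}(D)$, but indiscernibility is not. I would first attempt the argument with $D$ a model, where types are stationary. I would then reduce the general case by absorbing an $\aleph_1$-saturated model independent from everything, which can be done by extending $I$ at both ends. This is where the word ``total'' should be consumed.
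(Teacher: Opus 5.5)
Your converse direction (failure of $k$-total triviality gives failure of strong $(k+1)$-distality) is essentially the paper's proof of (ii)$\Rightarrow$(i), and it works. By Lemma~\ref{lemmamodel} you may take the base to be a model $M$ from the outset, so no absorption argument is needed there. Take $I$ a Morley sequence over $abM$ in the global non-forking extension $q$ of $\tp(a/M)$. Stationarity gives $a\models q\downharpoonright_{b_{\neq j}M}$, and total indiscernibility lets you place $a$ at the cut. Building $I$ over $abM$ also makes the sequence without $a$ indiscernible over $b_1\cdots b_{k+1}$, as the definition requires.

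You do, however, misstate both definitions. In strong $(k+1)$-distality the $b_j$ are never inserted. A single tuple $a$ sits at the cut and the $b_j$ are parameters; the hypotheses are that $I+a+J$ is indiscernible over each $b_{\neq m}$ and that $I+J$ is indiscernible over $b_1\cdots b_{k+1}$. And $k$-total triviality involves $k+1$ tuples, with hypothesis $a\indep_D b_{\neq j}$ for every $j$ and no condition among the $b_j$. That absence, not base enlargement, is what ``total'' refers to. Your Step~2 uses the correct form, but Step~1 conflates ``$c$ can be inserted at the cut'' with ``$I$ is indiscernible over $c$''; only the latter is being translated.

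The genuine gap is ``translating back'' in the forward direction. Your Step~1 claims an equivalence, but over $D=I_1$ only one direction holds. If $I_1+I_2$ is indiscernible over $b_J$, then elements of $I_2$ realise $\text{Av}(I)$, which is definable over $I_1$, so $I_2\indep_{I_1}b_J$. The converse fails: $I_1$ is not a model, types over it need not be stationary, and $I_2\indep_{I_1}b$ does not determine $\tp(I_2/I_1b)$. You correctly flag this as the main obstacle, but your remedy does not close it. Extending $I$ at both ends does not produce a model $M$ over which the right-hand part stays indiscernible over each $Mb_{\neq m}$. Arranging that (e.g.\ $M\supseteq I_1$ with $M\indep_{I_1}I_2b$) itself needs stationarity over $\mathrm{acl}^{eq}$.

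The missing idea is to use strong types directly and compare only two elements. Suppose $I$ is indiscernible over each $b_{\neq m}$ but not over $b=b_1\cdots b_{k+1}$. After passing to a sequence of blocks, choose $a,a'$ in the right part $I_2$ of $I=I_1+I_2$ with $a\not\equiv_b a'$. By Lemma~\ref{indiscstp}, $a\equiv^{\mathrm{stp}}_{I_1}a'$. Your one-way dictionary together with $k$-total triviality gives $a\indep_{I_1}b$ and $a'\indep_{I_1}b$. Fact~\ref{stpstationary} then gives $a\equiv_{I_1b}a'$, a contradiction. This proves condition (iii) of Proposition~\ref{equiv}. Applied to $I+a+J$, it yields strong $(k+1)$-distality at once, without even using that $I+J$ is indiscernible over $b_1\cdots b_{k+1}$, and it needs no model base.
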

\patchcmd{\thmhead}{#3}{(#3)}{}{}
Prior to the release of their paper, we had independently discovered a proof of this equivalence. Our proof uses different machinery to pass between notions of genericity on the side of non-forking independence (used to define $k$-total triviality) and on the side of indiscernible sequences (used to define strong $(k+1)$-distality), which we hope will shed light on more connections between the two sides, so we include our proof in Section \ref{subsecalternativeproof}.

These two equivalences allow us to transfer facts about $k$-triviality and $k$-total triviality to generate the aforementioned examples for $(k+1)$-distality and strong $(k+1)$-distality.

There is one more question of Walker concerning $k$-distality that one might hope to answer by passing between the equivalence to $(k-1)$-triviality.
\patchcmd{\thmhead}{(#3)}{#3}{}{}
\begin{question}[{\cite[Question 5.2]{walker}}]\label{qnNIP}
    Is there an NIP theory that is $k$-distal for some $k\geq 3$ but not 2-distal?
\end{question}
\patchcmd{\thmhead}{#3}{(#3)}{}{}

Restricting to stable theories, this is equivalent to asking if there is a stable theory that is $k$-trivial for some $k>1$ but not (1-)trivial. This was asked by Goode in \cite[Section 5]{goode}. Goode himself showed that $k$-triviality collapses to (1-)triviality among \textit{superstable} theories \cite[Proposition 3]{goode}. We show that this collapse holds even among simple theories, resolving Goode's question.
\begin{theorem}[Theorem \ref{collapse}]\label{introcollapse}
    Suppose $T$ is simple. For all $k\in\N^+$, $T$ is (1-)trivial if and only if it is $k$-trivial.
\end{theorem}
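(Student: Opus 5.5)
The direction ``$1$-trivial $\Rightarrow$ $k$-trivial'' is immediate. If any $k+1$ members of a family are independent over $D$, then in particular they are pairwise independent, so triviality gives independence of the whole family. For the converse I would argue by induction on $k$, proving that $k$-triviality implies $(k-1)$-triviality for every $k\geq 2$. Contrapositively: from a witness to the failure of $(k-1)$-triviality, I would build a witness to the failure of $k$-triviality.

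\textbf{Step 1 (minimal witness).} Suppose $T$ is not $(k-1)$-trivial. By finite character of forking, and by shrinking the family, I would fix a base $D$ and tuples $a_0,\dots,a_m$ that are not independent over $D$ while every $k$ of them are. Removing elements while dependence persists, I may assume every proper subfamily is independent over $D$. Then $m\geq k$. If $m\geq k+1$, the family already witnesses failure of $k$-triviality, since every $k+1$ of them form a proper subfamily and so are independent. So the only case to treat is $m=k$: a family $a_0,\dots,a_k$ that is ``$k$-wise'' independent but dependent over $D$.

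\textbf{Step 2 (enlarging the witness).} I would produce $k+2$ tuples such that every $k+1$ of them are independent over some base, but all $k+2$ are not. The plan is to split one element, say $a_k$, into two pieces via the independence theorem. Let $p(x,y_0,\dots,y_{k-1})=\mathrm{tp}(a_k,a_0,\dots,a_{k-1}/D)$. I would choose a new base $D'=De$ with $e$ independent from $a_0\cdots a_k$ over $D$, working over a model if needed so that the independence theorem applies. I would then use the independence theorem to realise two tuples $b,c$ such that:
\begin{itemize}
\item each of $b,c$ is individually ``generic'' over $D'$ together with any $k-1$ of the $a_i$;
\item jointly, $b$ and $c$ carry the forking of $a_k$ with $a_0\cdots a_{k-1}$, for instance $a_k\in\mathrm{dcl}(D'bc)$ up to interalgebraicity, with $b\indep_{D'} c$.
\end{itemize}
The candidate family is then $a_0,\dots,a_{k-1},b,c$. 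Any $k+1$ of these omit either some $a_i$ or one of $b,c$. In the first case independence should follow from $k$-wise independence of the original family together with the genericity of $b$ and $c$. In the second case it should follow from the choice of $b$ (respectively $c$) independent from $a_0\cdots a_{k-1}$. The full family is dependent because it ``contains'' $a_k$.

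\textbf{Step 3 (verification).} I would verify the required independences by transitivity and symmetry of forking in simple theories. Each check reduces to the $k$-wise independence of $a_0,\dots,a_k$ over $D$ and to the properties of $b,c$ obtained from the amalgamation.

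The main obstacle is Step 2: constructing the split $a_k\mapsto(b,c)$ in a general simple theory. In Goode's superstable argument, ranks presumably drive this. Here I would first try the independence theorem over a model $M\supseteq D$ chosen independent from the $a_i$. If the requirement that the pair $(b,c)$ jointly recover $a_k$ cannot be met exactly, I would try to weaken it to ``$bc$ forks with $a_0\cdots a_{k-1}$ over $D'$''. That weaker property is all the counterexample needs.
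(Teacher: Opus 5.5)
\textbf{There is a genuine gap at Step 2.} Your reduction in Step 1 is correct, and so is the easy direction. It is the paper's starting point: you only need to turn a $(k+1)$-cycle $(a_0,\dots,a_k)$ over $D$ into a cycle of length at least $k+2$.

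Step 2 is the entire content of the theorem, and the proposal gives no mechanism for it. The conditions you impose on $b,c$ say exactly that $(a_0,\dots,a_{k-1},b,c)$ is a $(k+2)$-cycle, so postulating them is postulating the conclusion.

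The tool you name cannot supply them. The independence theorem produces a realisation $c$ with $c\indep_M AB$. Whenever $b$ and $a_{<k}$ lie in the amalgamated data, this together with $b\indep_M a_{<k}$ gives $bc\indep_M a_{<k}$. So it can never create the dependence $bc\not\indep a_{<k}$ that you need. In a general simple theory there is also no group operation with which to make $a_k$ definable or algebraic from two independent pieces. Finally, adding an independent parameter $e$ to the base only transports the existing cycle (Corollary \ref{baseextension}) and creates nothing new.

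\textbf{The missing idea} is to replicate the \emph{other} coordinates instead of splitting $a_k$, and to use dividing along Morley sequences to force the replication to eventually produce dependence.

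Take blocks $a^{(i)}_{<k}\equiv_{Da_k} a_{<k}$ for $i<\lambda$, each independent over $Da_k$ from the earlier ones. The cycle gives $a_{<k,\neq s}\indep_D a_k$. Combining this with transitivity, any block with one coordinate removed is $D$-independent from $a_k$ together with all other blocks. There are then two cases.
\begin{enumerate}[(a)]
\item Some finite set of blocks is $D$-dependent. Take a minimal such set. By the previous observation and Lemma \ref{independencebasicprop}(ii), it is a cycle of length at least $2k\geq k+2$. Grouping coordinates gives a $(k+2)$-cycle.
\item All blocks are jointly $D$-independent. Then Fact \ref{extraction} gives a Morley sequence over $D$ that is indiscernible over $Da_k$. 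Along it, the formula witnessing $a_k\not\indep_D a_{<k}$ is realised by $a_k$, which contradicts Fact \ref{morley}.
\end{enumerate}
This is the paper's argument. The compactness step cannot be avoided, because the number of copies needed is not bounded in advance. For example, in $\mathrm{ACF}$ take $a_2$ to be the coefficients of a generic polynomial of degree $d$ and $(a_0,a_1)$ a generic point on its graph. Then any $d+1$ points on the graph that are independent over $a_2$ are still independent over $\emptyset$. So any pieces $b,c$ have to come out of this open-ended process, not from a single amalgamation.
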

Thus, if an NIP theory witnesses a positive answer to Question \ref{qnNIP}, it must be unstable.

As a corollary to Theorem \ref{introcollapse}, using results of Palac\'in in \cite{palacin}, we have that stable theories with quantifier elimination in a relational language of bounded arity are trivial, as well as an analogous statement for simple theories.
\begin{cor}[Corollary \ref{corkary}]\label{introcorkary}
    If a stable theory is $k$-ary for some $k\in\N^+$, i.e., it admits quantifier elimination in a relational language of bounded arity, then it must be trivial.

    More generally, if a simple theory is $k$-ary with $(k+1)$-complete amalgamation over models for some $k\in\N^+$, then it must be trivial.
\end{cor}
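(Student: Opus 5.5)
The plan is to reduce the corollary to Theorem \ref{introcollapse}: it suffices to show that a $k$-ary simple theory with $(k+1)$-complete amalgamation over models is $k$-trivial, since the collapse result then gives (1-)triviality. For the stable case, I will use Palac\'in's results from \cite{palacin}: stable theories have $n$-complete amalgamation over models for every $n$ (since $n$-amalgamation over models holds in stable theories by stationarity of types over models). So the first statement is a special case of the second.

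\textbf{Step 1: reduction to model bases.} Triviality of a simple theory (and $k$-triviality) can be checked over models; more precisely, if $k$-triviality fails over some set $D$, it fails over some model. I will either quote this as a standard fact in Goode's setting, or argue as follows. Take a family $a_0,\dots,a_{n}$ witnessing failure over $D$. Take $M\supseteq D$ with $M\indep_D a_0\dots a_n$. Then independence over $D$ of any subfamily transfers to independence over $M$ by transitivity. Failure over $D$ also transfers to failure over $M$. So the witness works over $M$.

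\textbf{Step 2: $k$-ary plus $(k+1)$-amalgamation gives $k$-triviality over a model $M$.} Suppose $a_0,\dots,a_n$ are such that any $k+1$ of them are independent over $M$. By induction on $n$, I may assume $a_0,\dots,a_{n-1}$ are independent over $M$, and I want $a_n\indep_M a_0\dots a_{n-1}$. Take $a_n'\models \tp(a_n/M)$ with $a_n'\indep_M a_0\dots a_{n-1}$ realising a compatible system of types. Specifically, I use complete amalgamation to build an $a_n'$ such that for each $S\subseteq\{0,\dots,n-1\}$ with $|S|\le k$, we have $\tp(a_n' a_S/M)=\tp(a_n a_S/M)$, and the whole family $a_0,\dots,a_{n-1},a_n'$ is independent over $M$. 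The data needed for this are exactly the types of $(k+1)$-subsets containing $a_n$, which are independent by hypothesis. The independent amalgamation problem is then indexed by the simplicial complex of $\le(k+1)$-subsets, and $(k+1)$-complete amalgamation, as Palac\'in shows, gives solvability of such problems over models. Since $T$ is $k$-ary, i.e., has QE in a language of arity $\le k$, the type $\tp(a_n' a_0\dots a_{n-1}/M)$ is determined by the types of $k$-element subtuples together with $M$. Hence it equals $\tp(a_n a_0\dots a_{n-1}/M)$, so $a_0,\dots,a_n$ is independent over $M$. Then Theorem \ref{introcollapse} finishes the proof.

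\textbf{Main obstacle.} The delicate point is the arity bookkeeping in Step 2. With parameters from $M$, each basic relation involves at most $k$ elements of the tuple, but the entries $a_i$ are tuples. So "$k$-ary" really gives determination by types of sub-collections of at most $k$ of the $a_i$'s over $M$. Hence only $k$-wise matching (with $a_n$ counted among them) is needed, and the amalgamation dimension must be checked to match $k+1$. I expect to lean on Palac\'in's precise formulation that $(k+1)$-complete amalgamation yields $n$-amalgamation for systems given on $(k+1)$-faces. If the indices are off by one, I will adjust using the hypothesis that $(k+1)$-subsets are independent, i.e., $k$-triviality rather than $(k-1)$-triviality. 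Either way, the collapse theorem absorbs the discrepancy.
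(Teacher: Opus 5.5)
Your reduction, which gets $m$-triviality for some $m$ from arity plus amalgamation and then applies Theorem~\ref{collapse}, is exactly the paper's route. The paper takes the $m$-triviality directly from Proposition~\ref{palacin} with $k-1$ in place of $k$ (for $k\ge 2$; no theory with infinite models is $1$-ary). A simple $k$-ary theory with $(k+1)$-complete amalgamation over models is $(k-1)$-trivial, and a stable $k$-ary theory is $(k-1)$-totally trivial, hence $(k-1)$-trivial. No reproof of Palac\'in's result is needed, and no appeal to amalgamation in stable theories either.

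The gap is in your Step~2, where you re-derive Palac\'in's result and the key step is unjustified. You prescribe $\tp(a_n'a_S/M)$ for all $|S|\le k$, that is, types on every face of size $\le k+1$ containing $n$, together with the full face $\{0,\dots,n-1\}$. As an amalgamation problem, its minimal missing faces have size $k+2$, and you must keep filling up to size $n+1$. That is $(k+2)$-amalgamation and beyond, which $(k+1)$-complete amalgamation does not give.

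The principle you attribute to Palac\'in, that $(k+1)$-complete amalgamation solves independent problems given on the $(\le k+1)$-faces, is false in general. For $k=2$ it would upgrade the independence theorem to $4$-amalgamation in every simple theory, which fails e.g.\ for the generic tetrahedron-free $3$-hypergraph. In your situation the problem is in fact solvable, but only because of $k$-arity, and you invoke arity only after the amalgamation. Also, the collapse theorem absorbs an off-by-one in the triviality index, not in the amalgamation dimension. Targeting $k$-triviality and matching $(k+1)$-faces raises the required dimension rather than lowering it.

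The clean repair is to use $k$-arity up front: only faces of size $\le k$ matter, so aim for $(k-1)$-triviality. That means ruling out a $(k+1)$-cycle $a_0,\dots,a_k$ over a model $M$, which suffices by Lemma~\ref{lemmamodel}. Its proper faces form a $\mathcal{P}^-(k+1)$-amalgamation problem. A solution is an $M$-independent tuple agreeing with $a_0\cdots a_k$ on every $k$-subset, hence of the same type over $M$ by $k$-arity, a contradiction. This is what Proposition~\ref{palacin}(i) provides, so you can simply cite it and then apply Theorem~\ref{collapse}.
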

This complements the following result of Baldwin, Freitag, and Mutchnik.
\patchcmd{\thmhead}{(#3)}{#3}{}{}
\begin{theorem}[{\cite[Section 6]{baldwinfreitagmutchnik}}]
    If a simple theory admits quantifier elimination in a finite relational language, then it must be trivial.
\end{theorem}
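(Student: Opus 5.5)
The plan is to deduce the statement from Theorem \ref{introcollapse}. Let $k$ be the maximal arity of a relation symbol in the finite relational language, so $T$ is $k$-ary. If we show that $T$ is $k$-trivial, Theorem \ref{introcollapse} gives triviality. By the second part of Corollary \ref{introcorkary}, it is in fact enough to show that $T$ has $(k+1)$-complete amalgamation over models. That corollary rests on Palac\'in's results in \cite{palacin}, which connect $k$-arity plus amalgamation to $k$-triviality. So the real content is to obtain the needed amalgamation from finiteness of the language, which a mere bound on arities does not provide.

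First I would record the structural consequences of finiteness. A theory with quantifier elimination in a finite relational language is $\omega$-categorical, and indeed its models are homogeneous structures over a finite language. For simple such theories I expect, following Koponen's analysis of simple homogeneous structures, that $T$ is supersimple of finite SU-rank and that forking is controlled by $\operatorname{acl}$ of finite sets. Using these facts I would then reduce $k$-triviality to an amalgamation problem. Let $a_0,\dots,a_n$ be tuples, every $k+1$ of which are independent over a base $D$ (taken to be a model, or $\operatorname{acl}$-closed). By quantifier elimination in a $k$-ary language, $\tp(a_0\dots a_n/D)$ is determined by the types over $D$ of the subtuples involving at most $k$ of the $a_i$. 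So it suffices to realise, over $D$, an independent family $(a'_i)$ whose restrictions to every $\le k$-subfamily agree with those of $(a_i)$. Then $(a'_i)\equiv_D(a_i)$, and the $a_i$ are independent.

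Finding such a realisation is exactly an instance of $(n+1)$-amalgamation of a compatible system of types indexed by the $\le k$-subsets. The step I expect to be the main obstacle is proving that these amalgamation problems are solvable. The independence theorem only provides $3$-amalgamation. My first attempt would exploit finiteness directly, in three steps:
\begin{enumerate}[(1)]
\item By $\omega$-categoricity there are only finitely many types of $(k+1)$-families over a finite base.
\item A Ramsey/indiscernibility argument over an independent Morley sequence should show that any compatible system of $\le k$-types with independent $(k+1)$-restrictions is consistent.
\item If a counterexample existed, one could build arbitrarily large configurations witnessing an inconsistency, which in a finite relational language would produce a $k$-dimensional dividing pattern (a tree property--style array) contradicting simplicity.
\end{enumerate}
This is in the spirit of the argument in \cite{baldwinfreitagmutchnik}. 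If it succeeds, we obtain $(k+1)$-complete amalgamation, and the proof finishes via Corollary \ref{introcorkary}.
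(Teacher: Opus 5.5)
The paper gives no proof of this statement. It imports it from Baldwin--Freitag--Mutchnik and presents it as complementary to Corollary~\ref{corkary}, because the paper's own route (Palac\'in's amalgamation hypothesis plus the collapse theorem) does not reach it. Your plan forces the statement through that route, and its pivotal claim is false: a simple theory with quantifier elimination in a finite relational language of maximal arity $k$ need not have $(k+1)$-complete amalgamation over models.

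Take Hrushovski's generic tetrahedron-free $3$-hypergraph.
\begin{enumerate}[(i)]
\item It is homogeneous in the language $\{R\}$ with $R$ ternary.
\item It is simple, indeed supersimple of SU-rank $1$ with trivial $\operatorname{acl}$, so three distinct points outside a model $M$ are $M$-independent.
\item It is not binary, so your $k$ is $3$.
\end{enumerate}
Over a model $M$, for each $3$-subset $\{i,j,k\}\subseteq[4]$ let $p_{ijk}(x_i,x_j,x_k)$ be the type of three new distinct points with $R(x_i,x_j,x_k)$ and no hyperedge meeting both $M$ and the new points. Each $p_{ijk}$ is realised by an $M$-independent triple, and these types agree on their two-variable restrictions. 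But a common realisation would be a tetrahedron. So $4$-amalgamation over models fails, and with it the $4$-complete amalgamation your argument needs. This theory is trivial, as the statement predicts, but not for the reason your plan gives. Whatever finiteness of the language is used for, it cannot be to produce higher amalgamation.

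Apart from this, the steps that would carry the real content are missing.
\begin{enumerate}[(1)]
\item Your condition in the second paragraph asks for an independent realisation with the same restrictions to every $\le k$-subfamily. For a $k$-ary theory this is literally equivalent to $k$-triviality, so step (2) restates what is to be proved. If you had it, Theorem~\ref{collapse} would finish at once, and neither amalgamation nor Corollary~\ref{corkary} would be needed.
\item Step (2) does not give $(k+1)$-complete amalgamation either. For a family of exactly $k+1$ tuples, its hypothesis already asserts that the whole family is realised and independent. Amalgamation instead concerns arbitrary compatible systems of $k$-dimensional faces.
\item Step (3), where finiteness is supposed to yield a dividing array contradicting simplicity, is only gestured at. You never specify the array, why its rows are inconsistent, or where finiteness, rather than mere bounded arity, enters. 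That is exactly the missing idea, since the bounded-arity version is left open in the paper.
\end{enumerate}
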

\patchcmd{\thmhead}{#3}{(#3)}{}{}
The previous two results leave open the following natural question.
\begin{question}
    If a simple theory admits quantifier elimination in an infinite relational language of bounded arity, must it be trivial?
\end{question}
Throughout this paper, $T$ denotes a complete theory with infinite models, and $\monster\models T$ is $\kappa$-saturated and strongly $\kappa$-homogeneous for some sufficiently large $\kappa$.
\subsection{Structure of the paper}
Section \ref{sec2} concerns forking triviality. After reviewing some basic properties of forking, we collect some facts about $k$-triviality and $k$-total triviality from the literature, and introduce a class of examples of Goode (`labelled free pseudoplanes') that separates the two notions. The section concludes with our result that $k$-triviality collapses to (1-)triviality among simple theories (Theorem \ref{collapse}).

Section \ref{sec3} concerns higher-arity distality. We first collect some facts and open questions about (strong) $k$-distality from the literature. Then, we give four classes of examples of (strongly) $k$-distal theories that are not $k$-ary, and describe another example showing the non-preservation of (strong) $k$-distality under taking reducts. The section concludes with our proof that $k$-total triviality is equivalent to strong $(k+1)$-distality among stable theories (Proposition \ref{equiv}).
\subsection{Acknowledgements}
I would like to thank Amador Martin-Pizarro for his input on strong types and triviality, Paolo Marimon for suggesting the examples in Sections \ref{subsubseckay} and \ref{subsubsecJohnson}, and Vincenzo Mantova for our discussions on Goode's examples. I am also grateful to Artem Chernikov for his invitation to the University of Maryland, our discussions on higher-arity distality, and his comments on an earlier version of this paper. Finally, I would like to thank Julia Wolf for her support throughout this project, and for her feedback on this paper.

\textit{Soli Deo Gloria.}
\subsection{Funding statement}
The author is supported by Julia Wolf's Open Fellowship from the UK Engineering and Physical Sciences Research Council (EP/Z53352X/1).
\section{Triviality of forking}\label{sec2}
In this section, we discuss notions of forking triviality from the literature, culminating in our proof that $k$-triviality collapses to (1-)triviality among simple theories (Theorem \ref{collapse}).
\subsection{Preliminaries on forking}
Let us first collect some basic properties of forking. For an introduction to forking, dividing, simplicity, and stability, we refer the reader to \cite[Chapters 7 and 8]{tentziegler}.
\begin{defn}
    Let $A\subseteq B\subseteq \monster$. For $p\in S(A)$ and $q\in S(B)$ such that $p\subseteq q$, say that $q$ is a \textit{non-forking extension of $p$ over $B$} if $q$ does not fork over $A$.
\end{defn}
\begin{prop}
    Suppose $T$ is simple. Then the following properties of non-forking independence hold for all tuples $a$ and small sets $A,B,C,D\subseteq \monster$.
    \begin{enumerate}[(i)]
        \item \textsc{(Invariance)} If $\sigma$ is an automorphism of $\monster$, then $A\indep_D B$ if and only if $\sigma(A)\indep_{\sigma(D)} \sigma(B)$.
        \item \textsc{(Finite Character)} $A\indep_D B$ if and only if $A\indep_D B_0$ for all finite $B_0\subseteq B$.
        \item \textsc{(Symmetry)} $A\indep_D B$ if and only if $B\indep_D A$.
        \item \textsc{(Existence)} For all (not necessarily small) $D'\subseteq \monster$ such that $D\subseteq D'$, $\tp(a/D)$ has a non-forking extension over $D'$.
        \item \textsc{(Monotonicity and Transitivity)} $AB\indep_D C$ if and only if $A\indep_D C$ and $B\indep_{AD} C$.
        \item \textsc{(Strong base monotonicity)} If $AB\indep_D C$, then $A\indep_D B$ if and only if $A\indep_{CD} B$.
        \item \textsc{(Extension)} If $a\indep_D B$, then there is $a'\equiv_{BD} a$ such that $a'\indep_D BC$.
    \end{enumerate}
\end{prop}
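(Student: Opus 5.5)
Most of these properties are standard in simple theories, so the plan is to cite the relevant results from \cite[Chapters 7 and 8]{tentziegler} and derive the remaining ones by short formal arguments. The starting point is Kim's theorem that in a simple theory forking equals dividing for types over arbitrary sets. This gives \textsc{Symmetry} and \textsc{Transitivity}, since $T$ is simple if and only if $\indep$ is symmetric, if and only if it is transitive.

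\textsc{Invariance} is immediate, because automorphisms preserve dividing of formulas over the appropriately moved parameters. \textsc{Finite Character} follows because forking is witnessed by a single formula $\varphi(x,b)$ with $b$ from a finite part of $B$. For the converse direction we use symmetry to move the finiteness to the right side as needed. \textsc{Existence} holds for arbitrary $D'\supseteq D$ in any theory: a type over $D$ never forks over $D$. By compactness, the partial type $\tp(a/D)$ together with the negations of all formulas over $D'$ that fork over $D$ is consistent. The point is that forking formulas form an ideal, and no formula of $\tp(a/D)$ forks over $D$. Any completion of this partial type is the required extension.

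\textsc{Monotonicity and Transitivity} in the stated form $AB\indep_D C \iff A\indep_D C$ and $B\indep_{AD}C$ is the standard left transitivity. It is obtained by applying symmetry to the right-hand monotonicity and transitivity of forking, which holds in all theories: $C\indep_D AB$ iff $C\indep_D A$ and $C\indep_{AD}B$.

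\textsc{Strong base monotonicity} is then a formal consequence of transitivity and symmetry, as follows.
\begin{itemize}
\item From $AB\indep_D C$, applying transitivity with $B$ first gives $B\indep_D C$ and $A\indep_{BD}C$.
\item If $A\indep_D B$, transitivity yields $A\indep_D BC$, hence $A\indep_{CD}B$ by transitivity again.
\item Conversely, $AB\indep_D C$ gives $A\indep_D C$. Together with $A\indep_{CD}B$, transitivity yields $A\indep_D BC$, and monotonicity gives $A\indep_D B$.
\end{itemize}

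\textsc{Extension} follows from \textsc{Existence}. Take a non-forking extension $q$ of $\tp(a/BD)$ over $BCD$, and let $a'\models q$. Then $a'\equiv_{BD}a$ and $a'\indep_{BD} C$. Combining this with $a'\indep_D B$ by transitivity gives $a'\indep_D BC$.

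The only substantial input is Kim's symmetry and transitivity theorem; everything else is bookkeeping. The expected obstacle is therefore only making sure each derivation uses the versions (left versus right) actually available.
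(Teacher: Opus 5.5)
Your overall strategy (cite Kim's results for simple theories and derive the rest formally) matches the paper, which simply refers to Tent--Ziegler, Section 7.2. However, two of your justifications are false as stated, and the first sits exactly where simplicity has to be used.

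In \textsc{Existence} you claim that in any theory a type over $D$ never forks over $D$. This fails in general. In a dense circular order, $x=x$ implies a disjunction of finitely many arcs, each of which divides over $\emptyset$. So no type over $\emptyset$ has a non-forking extension at all. Your compactness argument using the ideal of forking formulas is fine, but its input must come from simplicity. By Kim's theorem (which you state at the outset but do not use here), forking equals dividing. A consistent formula $\varphi(x,d)$ with $d$ from $D$ cannot divide over $D$, since the only sequence of realisations of $\tp(d/D)$ is the constant one. With this added, (iv) is proved.

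Similarly, the ``right-hand'' transitivity ($C\indep_D A$ and $C\indep_{AD}B$ imply $C\indep_D AB$) does not hold in all theories. Take DLO with $a<c<b$. Then $c\indep_\emptyset a$ and $c\indep_a b$, witnessed by the invariant global types at $+\infty$ and at $a^+$, yet $a<x<b$ divides over $\emptyset$. Your own first paragraph says that transitivity characterises simplicity, so it cannot be a general fact. What does hold in every theory is the right-to-left direction of (v), i.e.\ left transitivity. Since $T$ is simple, you may take right transitivity from Kim's theorem. With that, your derivations of (v), \textsc{Strong base monotonicity} and \textsc{Extension} go through.

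A minor point: the appeal to symmetry in \textsc{Finite Character} is unnecessary, because a forking formula in $\tp(A/BD)$ mentions only finitely many parameters from $B$.
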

\begin{proof}
    See \cite[Section 7.2]{tentziegler}.
\end{proof}
Given tuples $b_1, ..., b_k$ from $\monster$ and $I\subseteq [k]$, we write $b_I:=(b_i: i\in I)$.
\begin{cor}\label{baseextension}
    Suppose $T$ is simple. Let $a, b_1, ..., b_k, a', b'_1, ..., b'_k$ be tuples from $\monster$, and let $C\subseteq D\subseteq \monster$. If $\tp(a',b'_1, ..., b'_k/D)$ is a non-forking extension of $\tp(a,b_1, ..., b_k/C)$, then for all $I\subseteq [k]$, $a\indep_C b_I$ if and only if $a'\indep_D b'_I$.
\end{cor}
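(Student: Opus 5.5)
We must show: if $\tp(a',b'_1,\dots,b'_k/D)$ extends $\tp(a,b_1,\dots,b_k/C)$ and does not fork over $C$, then $a\indep_C b_I$ if and only if $a'\indep_D b'_I$. The plan is to reduce to the primed tuples over the two bases $C$ and $D$, and then apply strong base monotonicity.

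First, since $\tp(a',b'_1,\dots,b'_k/C)=\tp(a,b_1,\dots,b_k/C)$, there is an automorphism of $\monster$ fixing $C$ pointwise and sending $(a,b_1,\dots,b_k)$ to $(a',b'_1,\dots,b'_k)$. By Invariance, $a\indep_C b_I$ if and only if $a'\indep_C b'_I$. So it suffices to prove
\[
a'\indep_C b'_I \iff a'\indep_D b'_I .
\]

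Second, the non-forking hypothesis says $a'b'_1\cdots b'_k\indep_C D$. By Monotonicity, this gives $a'b'_I\indep_C D$. Now apply Strong base monotonicity with $A=a'$, $B=b'_I$, base $C$ and extra set $D$. We get $a'\indep_C b'_I$ if and only if $a'\indep_{CD} b'_I$. Since $C\subseteq D$, we have $CD=D$, so the right-hand side is exactly $a'\indep_D b'_I$.

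There are two small technical points, neither a real obstacle. One is that Strong base monotonicity is stated for small sets, so $D$ should be small, as is implicit in the paper's conventions. The other is the degenerate case $I=\emptyset$, which is trivial since independence from the empty tuple always holds. Otherwise the argument is a direct chaining of the listed properties.
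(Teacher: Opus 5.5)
Your proposal is correct and follows essentially the same route as the paper's proof. You use Invariance to pass from $a\indep_C b_I$ to $a'\indep_C b'_I$, then apply Monotonicity to $a'b'_1\cdots b'_k\indep_C D$, and finally Strong base monotonicity with $CD=D$.
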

\begin{proof}
    Let $I\subseteq [k]$; note that we have $a'b'_I\indep_C D$. Since $(a',b'_I)\equiv_C (a,b_I)$, by \textsc{Invariance} we have $a\indep_C b_I$ if and only if $a'\indep_C b'_I$, which holds if and only if $a'\indep_D b'_I$ by \textsc{Strong base monotonicity}.
\end{proof}
\begin{defn}
    Let $A\subseteq \monster$ be small and $p\in S(A)$. Say that $p$ is \textit{stationary} if, for all (not necessarily small) $B\subseteq \monster$ such that $A\subseteq B$, the type $p$ has a unique non-forking extension over $B$, which we write as $p\upharpoonright^B$.
    
    Note that $p$ is stationary if and only if it has a unique global non-forking extension.
\end{defn}
\begin{lemma}\label{stationaryinvariant}
    Let $A\subseteq \monster$ be small, and let $p\in S(A)$ be stationary. Then $\globalext{p}$ is $A$-invariant.
\end{lemma}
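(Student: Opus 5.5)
The plan is to use \textsc{Invariance} together with uniqueness of the global non-forking extension. Write $q:=\globalext{p}$ for the unique global non-forking extension of $p$. We must show that for every automorphism $\sigma$ of $\monster$ fixing $A$ pointwise, $\sigma(q)=q$, where $\sigma(q):=\{\varphi(x,\sigma(b)):\varphi(x,b)\in q\}$.

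The key steps are as follows.

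First, $\sigma(q)$ is again a complete global type, since $\sigma$ is a bijection of $\monster$ preserving formulas.

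Second, $\sigma(q)$ extends $p$. Since $\sigma$ fixes $A$ pointwise, for any $\varphi(x,a)\in p$ with $a\subseteq A$ we have $\varphi(x,a)=\varphi(x,\sigma(a))\in\sigma(q)$, because $\varphi(x,a)\in q$.

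Third, $\sigma(q)$ does not fork over $A$. Suppose some formula $\varphi(x,\sigma(b))\in\sigma(q)$ forks over $A$. Then it implies a finite disjunction of formulas $\psi_i(x,c_i)$, each dividing over $A$. Applying $\sigma^{-1}$, which also fixes $A$ pointwise, $\varphi(x,b)\in q$ implies $\bigvee_i\psi_i(x,\sigma^{-1}(c_i))$. Each $\psi_i(x,\sigma^{-1}(c_i))$ divides over $A$, because dividing is witnessed by an $A$-indiscernible sequence, and $\sigma^{-1}$ maps $A$-indiscernible sequences to $A$-indiscernible sequences. This contradicts that $q$ does not fork over $A$. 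Alternatively, one can phrase this step via \textsc{Invariance} from the preceding proposition. For realizations in a bigger monster one must be careful, so I would use the formula-level argument just given, which is self-contained.

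Finally, by stationarity $q$ is the unique global non-forking extension of $p$, so $\sigma(q)=q$. Concretely, for $b\in\monster$ we get $\varphi(x,b)\in q$ if and only if $\varphi(x,\sigma(b))\in q$, which is precisely $A$-invariance.

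The main obstacle is purely a matter of bookkeeping. The types in question are global, so they cannot be realised in $\monster$, and the paper's \textsc{Invariance} clause is stated for small sets. For this reason I would argue directly at the level of forking formulas, as in the third step, rather than through realisations.
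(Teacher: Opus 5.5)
Your proof is correct and follows the same route as the paper: apply an automorphism fixing $A$ to $\globalext{p}$, note that the image is again a global non-forking extension of $p$, and conclude by uniqueness. The paper phrases this as a contradiction from a witness $b\equiv_A b'$ and asserts without comment that the transported type does not fork. You instead argue directly and verify non-forking explicitly at the level of dividing formulas, which is a harmless and slightly more careful presentation of the same argument.
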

\begin{proof}
    Suppose $\globalext{p}$ is not $A$-invariant, so there is a formula $\phi(x;y)$ and tuples $b,b'$ from $\monster$ with $b\equiv_A b'$, such that $\phi(x;b)\in p$ and $\phi(x;b')\not\in p$. Let $\sigma$ be an automorphism of $\monster$ that fixes $A$ and sends $b$ to $b'$, and let $q:=\{\psi(x;\sigma(d)): \psi(x;d)\in \globalext{p}\}$. Then $q$ is a global non-forking extension of $p$, distinct from $\globalext{p}$ as $\phi(x;b')\in q\setminus (\globalext{p})$, contradicting stationarity.
\end{proof}
\begin{lemma}\label{typesovermodels}
    If $T$ is stable, then all types over small models are stationary.
\end{lemma}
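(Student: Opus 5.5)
This is the standard fact that in a stable theory, types over models are definable and have a unique non-forking global extension. I would prove it by combining definability of types with the characterisation of non-forking over models as heirs/coheirs.

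Fix a small model $M\preceq\monster$ and $p\in S(M)$. By stability, every type over $M$ is definable. For each formula $\phi(x;y)$ there is an $L(M)$-formula $d_p\phi(y)$ such that $\phi(x;m)\in p$ if and only if $\models d_p\phi(m)$ for all $m\in M$. The global type
\[
q:=\{\phi(x;b): b\in\monster,\ \models d_p\phi(b)\}
\]
is consistent and complete, because $M\preceq\monster$ and the relevant consistency and completeness statements are first-order properties of the defining formulas that hold in $M$. It is $M$-definable, hence $M$-invariant, and therefore does not fork over $M$. In stable theories, a global $M$-invariant type does not divide over $M$, using a Morley sequence in the type. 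This gives existence, which in any case also follows from \textsc{Existence}.

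For uniqueness, let $q'$ be any global non-forking extension of $p$; I will show $q'=q$. In a stable theory, a global type that does not fork over a model $M$ is definable over $M$. Suppose instead that $\phi(x;b)\in q'$ but $\models \neg d_p\phi(b)$. Then $q'$ is an heir of $p$: a type not forking over a model $M$ in a stable theory is an heir, because non-forking extensions are definable over $\mathrm{acl}^{eq}(M)=\mathrm{dcl}^{eq}(M)$. Being an heir, $q'$ transfers the formula $\phi(x;b)\wedge\neg d_p\phi(b)$ down to $M$. That is, there is $m\in M$ with $\phi(x;m)\in p$ and $\models\neg d_p\phi(m)$, contradicting the choice of $d_p\phi$. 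So $q'$ is determined by the $d_p\phi$, and $q'=q$.

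The main obstacle is justifying that a non-forking extension over a model is definable over $M$ itself, rather than only over $\mathrm{acl}^{eq}(M)$. Here one uses that $M$ is a model, so every imaginary algebraic over $M$ is already in $\mathrm{dcl}^{eq}(M)$, since a model realises every algebraic type. Alternatively, I would simply cite the standard result (Tent–Ziegler, Chapter 8) that in stable theories types over $\mathrm{acl}^{eq}(A)$, and in particular over models, are stationary. By the remark after the definition of stationarity, this is exactly the statement.
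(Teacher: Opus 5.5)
Your argument is correct. The paper gives no argument of its own: it simply cites \cite[Corollary 8.5.4]{tentziegler}, which is the alternative you mention in your last sentence. Your main text therefore unpacks that citation, and the unpacking is sound.

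Existence comes from the canonical global extension given by the defining scheme $d_p$. Uniqueness comes from the theorem that, in a stable theory, a global type not forking over $M$ is definable over $\mathrm{acl}^{eq}(M)$. Since $M$ is a model, $\mathrm{acl}^{eq}(M)=\mathrm{dcl}^{eq}(M)=M^{eq}$, so such a type is $M$-definable. This makes explicit that the real content lies in the characterisation of forking via definability over $\mathrm{acl}^{eq}$. The model hypothesis enters only through the identity $\mathrm{acl}^{eq}(M)=\mathrm{dcl}^{eq}(M)$.

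Two steps can be simplified. First, the heir step is superfluous. Once $q'$ is definable over $M$, its defining formulas agree with the $d_p\phi$ on $M$, so they are equivalent to them because $M\preceq\monster$, and $q'=q$ follows directly. Second, a global $M$-invariant type does not divide (hence does not fork) over $M$ in any theory, by invariance alone. No Morley sequence or stability is needed for that step.
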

\begin{proof}
    See \cite[Corollary 8.5.4]{tentziegler}.
\end{proof}
\subsection{Notions of triviality}\label{subsectriviality}
Throughout this subsection, suppose $T$ is simple.

Recall that $T$ is \textit{trivial} if, for all tuples $a, b, c$ and small sets $D$, if $a\indep_D b$, $b\indep_D c$, and  $a\indep_D c$, then $a\indep_D bc$. In \cite{goode}, Goode extended this notion to that of \textit{$k$-triviality}, which we shall build up to define in Definition \ref{defnktrivial}.
\begin{defn}
    Let $D\subseteq \monster$ be small, and let $(a_i: i\in I)$ be a sequence of tuples from $\monster$, indexed by a small linear order $I$. Say that $(a_i: i\in I)$ is \textit{independent over $D$} or \textit{$D$-independent} if, for all $i\in I$, $a_i\indep_D \{a_j: j<i\}$.
\end{defn}
\begin{lemma}\label{independencebasicprop}
    ($T$ is simple.) Let $D\subseteq \monster$ be small, and let $(a_i: i\in I)$ be a sequence of tuples from $\monster$, indexed by a small linear order $I$.
    \begin{enumerate}[(i)]
        \item That $(a_i: i\in I)$ is $D$-independent does not depend on the ordering of $I$.
        \item Let $I$ be partitioned into $J$ and $K$. Then $(a_i: i\in I)$ is $D$-independent if and only if $(a_j: j\in J)$, $(a_k: k\in K)$ are $D$-independent and
        \[\left\lbrace a_j: j\in J\right\rbrace\indep_D \left\lbrace a_k: k\in K\right\rbrace.\]
    \end{enumerate}
\end{lemma}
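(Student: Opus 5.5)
The plan is to reduce both parts to the basic forking calculus in the preceding Proposition, mainly \textsc{Monotonicity and Transitivity}, \textsc{Symmetry} and \textsc{Finite Character}. First I would reduce to finite index sets. By \textsc{Finite Character} together with monotonicity, $(a_i: i\in I)$ is $D$-independent if and only if every finite subsequence is $D$-independent. Indeed, $a_i\indep_D\{a_j:j<i\}$ holds if and only if $a_i\indep_D \{a_j: j\in F\}$ for every finite $F\subseteq\{j: j<i\}$. Conversely, if the whole sequence is independent, then any subsequence is, by monotonicity on the right (this uses \textsc{Symmetry} plus the monotonicity half of the transitivity property). So it suffices to handle finite $I$, say $I=[n]$.

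For finite $I$, I would prove by induction on $n$ the following ordering-free characterisation: $(a_1,\dots,a_n)$ is $D$-independent if and only if for every partition $I=J\sqcup K$ we have $\{a_j:j\in J\}\indep_D\{a_k:k\in K\}$ and the two pieces are themselves independent. Part (i) follows from this directly. The key inductive step is as follows. Suppose $a_n\indep_D a_{<n}$ and $a_{<n}$ is independent, and let $J\sqcup K=[n]$ with $n\in K$. By induction $a_{J}\indep_D a_{K\setminus\{n\}}$. Since $a_n\indep_D a_{<n}$, monotonicity and transitivity give $a_n\indep_{D a_{K\setminus\{n\}}} a_J$. Using symmetry and then transitivity, $a_J\indep_D a_{K\setminus\{n\}}$ and $a_J\indep_{D a_{K\setminus\{n\}}} a_n$ combine to $a_J\indep_D a_K$.

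Part (ii) then follows. For the forward direction, the displayed independence comes from the computation above, extended to infinite $J,K$ by finite character, and the sub-independence of each piece comes from monotonicity. For the converse, reorder $I$ so that $J$ precedes $K$, which is legitimate by part (i). For $k\in K$, we have $a_k\indep_{D a_J} a_{K,<k}$: this follows from $a_J\indep_D a_K$ via transitivity, giving $a_k\indep_{D a_J}a_{K,<k}$ once we also know $a_k\indep_D a_{K,<k}$ together with $a_K\indep_D a_J$. We then chain with $a_k\indep_D a_J$ to get $a_k\indep_D a_J a_{K,<k}$.

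The main obstacle is the bookkeeping in that last transitivity step. One must correctly pass from $a_{K}\indep_D a_J$ and $a_k\indep_D a_{K,<k}$ to $a_k\indep_{D a_{K,<k}} a_J$; this is exactly the right-to-left direction of the transitivity axiom applied to $a_{k}a_{K,<k}\indep_D a_J$. The transitivity axiom is stated only as a one-sided biconditional, so symmetry has to be applied carefully at each stage.
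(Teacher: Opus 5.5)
Your argument is correct, but it is organised differently from the paper's. The paper does not prove (i); it cites \cite[Corollary 7.2.18]{tentziegler}. For (ii) it reduces via \textsc{Finite Character} to finite pieces and runs two inductions: on $|K_0|$ for finite $K_0\subseteq K$ in the forward direction, and, after invoking (i), on $|J_0|$ for finite $J_0\subseteq J$ in the converse.

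You instead make the proof self-contained. After reducing to finite sequences, your induction shows that an independent $(a_1,\dots,a_n)$ has $a_J\indep_D a_K$ for every partition $J\sqcup K$, and in particular $a_i\indep_D a_{[n]\setminus\{i\}}$ for each $i$. That family of conditions is manifestly order-free and gives independence in any order by monotonicity, and this is what actually yields (i). Your ``characterisation'' as phrased is not itself order-free: the clause about the pieces refers to the ordering, and for the trivial partition it is the statement being characterised. The same computation, together with \textsc{Finite Character} on both sides, gives the forward direction of (ii). Your converse needs no induction: you order $J$ before $K$ and verify each $k\in K$ in one step. So your route buys a proof of (i) and a shorter converse, while the paper's is shorter on the page only because (i) is outsourced.

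There is one small slip in your last paragraph. $a_k\indep_{Da_{K,<k}}a_J$ comes from $a_{K,<k}a_k\indep_D a_J$ by the \emph{left-to-right} direction of \textsc{Monotonicity and Transitivity}, without using $a_k\indep_D a_{K,<k}$. You then combine it with $a_k\indep_D a_{K,<k}$ via the right-to-left direction to get $a_{K,<k}a_J\indep_D a_k$. This intermediate statement also differs from the $a_k\indep_{Da_J}a_{K,<k}$ of your previous paragraph, which is really \textsc{Strong base monotonicity}. Either route works.
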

\begin{proof}
    For (i), see \cite[Corollary 7.2.18]{tentziegler}. Let us show (ii). Suppose firstly that $(a_i: i\in I)$ is $D$-independent, from which one easily deduces that $(a_j: j\in J)$, $(a_k: k\in K)$ are $D$-independent. By \textsc{Finite Character}, it suffices to show that $\{a_j: j\in J\}\indep_D \{a_k: k\in K_0\}$ for all finite $K_0\subseteq K$; this is an easy induction on $|K_0|$. For the converse, by (i) and \textsc{Finite Character}, it suffices to show that $\{a_j: j\in J\setminus J_0\}\indep_D \{a_i: i\in J_0\cup K\}$ for all finite $J_0\subseteq J$; this is an easy induction on $|J_0|$. 
\end{proof}

In light of (i), we will say that a set $A$ of tuples from $\monster$ is \textit{independent over $D$} or \textit{$D$-independent} (for $D\subseteq \monster$ small) if some (equivalently, every) enumeration of $A$ is.
\begin{defn}\label{defnktrivial}
    Let $D\subseteq \monster$ be small. Say that a sequence $a=(a_1, ..., a_k)$ of tuples from $\monster$ is a \textit{$k$-cycle over $D$} if every proper subsequence of $a$ is $D$-independent but $a$ is not $D$-independent.

    For $k\in\N^+$, say that $T$ is \textit{$k$-trivial} if there are no $(k+2)$-cycles over any small $D\subseteq \monster$.
\end{defn}
Note that 1-triviality is equivalent to triviality. For all $k\geq 2$, if $T$ is $(k-1)$-trivial then it is $k$-trivial, so we say that $T$ is \textit{strictly $k$-trivial} if it is $k$-trivial but not $(k-1)$-trivial.

In \cite{goode}, Goode also defined the following notion of \textit{$k$-total triviality}.
\begin{defn}\label{defnktotallytrivial}
    Let $k\in\N^+$. Say that $T$ is \textit{$k$-totally trivial} if, given tuples $a, b_1, ..., b_{k+1}$ from $\monster$ and a small set $D\subseteq\monster$, if $a\indep_D b_{\neq j}$ for all $j\in [k+1]$, then $a\indep_D b_1\cdots b_{k+1}$.
\end{defn}
When $k=1$, we simply say that $T$ is \textit{totally trivial}. As with the triviality hierarchy, for all $k\geq 2$, if $T$ is $(k-1)$-totally trivial then it is $k$-totally trivial, so we say that $T$ is \textit{strictly $k$-totally trivial} if it is $k$-totally trivial but not $(k-1)$-totally trivial.

It will often be convenient to assume that the base $D$ is a small model, and the following lemma says that we can.
\begin{lemma}\label{lemmamodel}
    ($T$ is simple.) In Definitions \ref{defnktrivial} and \ref{defnktotallytrivial}, $D$ may be replaced by a small model.
\end{lemma}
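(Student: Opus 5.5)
The plan is to move the base from an arbitrary small set $D$ to a small model $M\supseteq D$ chosen independent from all the tuples over $D$, and to transfer the relevant independence statements across using Corollary \ref{baseextension}. The claim is that if there is a counterexample over some small $D$, then there is one over a small model. The converse is trivial, since a model is in particular a small set. So it suffices to produce, from a witness over $D$, a witness over a model.

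Fix a small $D$ and a putative witness: either a $(k+2)$-cycle $(a_1,\dots,a_{k+2})$ over $D$, or tuples $a,b_1,\dots,b_{k+1}$ with $a\indep_D b_{\neq j}$ for all $j$ but $a\not\indep_D b_1\cdots b_{k+1}$. Let $M_0\supseteq D$ be any small model. By \textsc{Existence}, $\tp(a_1\cdots a_{k+2}/D)$ (respectively $\tp(a b_1\cdots b_{k+1}/D)$) has a non-forking extension over $M_0$. Realise it by a tuple $a'_1\cdots a'_{k+2}$, and choose an automorphism fixing $D$ that sends this tuple to the original one; the image $M$ of $M_0$ is then a small model containing $D$ with $\tp(a_1\cdots a_{k+2}/M)$ non-forking over $D$. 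Equivalently, one can directly use \textsc{Extension} to get $M\equiv_D M_0$ with $a_1\cdots a_{k+2}\indep_D M$.

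The main step is to check that every independence statement appearing in the definitions transfers between $D$ and $M$. For total triviality this is exactly Corollary \ref{baseextension} with $C=D$ and base $M$: for each $I\subseteq[k+1]$, we have $a\indep_D b_I$ if and only if $a\indep_M b_I$. Taking $I=[k+1]\setminus\{j\}$ and $I=[k+1]$ gives the witness over $M$.

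For $k$-triviality, independence of a set of tuples is a conjunction of statements of the form $a_i\indep_D \{a_j: j\in J\}$, by Lemma \ref{independencebasicprop}(i). Each such statement transfers by Corollary \ref{baseextension}, applied with $a:=a_i$ and the $b$'s being the remaining tuples; this is legitimate because a non-forking extension of the whole type restricts to one of any subtuple, by \textsc{Monotonicity}. Thus each proper subsequence is $D$-independent if and only if it is $M$-independent, and likewise for the whole sequence, so the cycle over $D$ is a cycle over $M$.

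I expect the only delicate point to be the setup rather than the transfer itself. One must arrange the extension so that the \emph{original} tuples, not copies of them, are independent from the model; otherwise Corollary \ref{baseextension} is applied to the wrong objects. The conjugation by an automorphism over $D$ handles this. Since the base may be enlarged to a model in either direction of the equivalence, both definitions may be stated over small models.
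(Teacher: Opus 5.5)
Your argument is correct and is essentially the paper's: extend $\tp(a/D)$ non-forkingly to a small model $M\supseteq D$ and transfer every independence condition via Corollary \ref{baseextension}. The one difference is that the paper simply takes the realisation $a'$ of the extension as the new witness, since Corollary \ref{baseextension} is stated precisely to compare $a$ over $C$ with the copy $a'$ over $D$; your conjugation by an automorphism over $D$ is therefore harmless but unnecessary.
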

\begin{proof}
    Suppose $T$ is not $k$-trivial, witnessed by a $(k+2)$-cycle $a=(a_1, ..., a_{k+2})$ over $D$. Let $M\models T$ be a small model containing $D$. By \textsc{Existence}, there is a non-forking extension $q\in S(M)$ of $\tp(a/D)$; let $a'=(a'_1, ..., a'_{k+2})\models q$. By Lemma \ref{baseextension}, $a'$ is a $k$-cycle over $M$.
    
    The corresponding claim for $k$-total triviality can be similarly proven.
\end{proof}
It is easy to see that if $T$ is $k$-totally trivial, then it is $k$-trivial. Goode \cite{goode} produced the following examples to show that the converse fails.

For $n\in\N^+$, let $T_n$ be the following $L_n$-theory in $n$ sorts $X_1, ..., X_n$, where $L_n$ consists of function symbols $g_i: X_{i-1}\times X_i\to X_i$ for all $i\in [n]\setminus \{1\}$ (so $L_1=\emptyset$). The axioms of $T_n$ say that, for all $i\in [n]\setminus \{1\}$, $g_i$ induces a free action of $F(X_{i-1})$ --- the free group generated by $X_{i-1}$ --- on $X_i$. That is, for all $i\in [n]\setminus \{1\}$, we have the following.
\begin{enumerate}[(i)]
    \item For all $a\in X_{i-1}$, $g_i(a, \cdot): X_i\to X_i$ is a bijection; we denote this function by $a\ast \cdot$, and its inverse by $a^{-1}\ast \cdot$.
    \item We extend the definition of $\ast$ as follows. For $a_1, ..., a_k\in X_{i-1}$, $\varepsilon_1, ..., \varepsilon_k\in \{-1,1\}$, and $b\in X_i$, set
    \[(a_1^{\varepsilon_1}\cdots a_k^{\varepsilon_k})\ast b:=a_1^{\varepsilon_1}\ast(a_2^{\varepsilon_2}\ast(\cdots(a_k^{\varepsilon_k}\ast b)\cdots)).\]
    This defines an action of $F(X_{i-1})$ on $X_i$. The axioms of $T_n$ say that this action is \textit{free}, that is, every non-trivial element of $F(X_{i-1})$ fixes no point of $X_i$.
\end{enumerate}

We also let $L_\omega:=\bigcup_{n\in\N^+}L_n$ and $T_\omega$ be the $L_\omega$-theory given by $\bigcup_{n\in\omega} T_n$.

The reader should visualise a model $(X_1, ..., X_n)$ of $T_n$ as follows. For each $i\in [n]\setminus \{1\}$, consider $X_i$ as the vertex set of the Schreier graph of the action of $F(X_{i-1})$ on $X_i$. The connected components of this graph correspond to the orbits of this action. Since the action is free, given any two (possibly identical) connected components $Y_1, Y_2$ with base points $b_1\in Y_1$ and $b_2\in Y_2$, any permutation of $X_{i-1}$ induces an isomorphism between $Y_1$ and $Y_2$ sending $b_1$ to $b_2$.

Observe that each Schreier graph is an infinitely branching forest, known to model theorists as a `free pseudoplane'. Thus, we may view $T_n$ as an $n$-sorted `labelled free pseudoplane', where in the Schreier graph on $X_i$ for $i\in [n]\setminus \{1\}$, the edges are labelled by elements of $X_{i-1}$ and their inverses in a `free' way.

With this picture in mind, it is an easy exercise to check that the axioms of $T_n$ determine a complete theory: any two models of $T_n$ are isomorphic by the discussion above. Thus, we will henceforth assume that $T_n$ is complete. Goode showed the following.
\patchcmd{\thmhead}{(#3)}{#3}{}{}
\begin{theorem}[{\cite[Section 2]{goode}}]\label{goodeclassification}
    For all $n\in\N^+$, the theory $T_n$ is superstable, trivial, and strictly $2^{n-1}$-totally trivial. Hence, the theory $T_\omega$ is superstable, trivial, and not $k$-totally trivial for any $k\in\N^+$.
\end{theorem}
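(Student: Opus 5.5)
The plan is to prove everything by induction on $n$, starting from an explicit description of types and forking in $T_n$. For $n=1$ the theory $T_1$ is the theory of an infinite set, and all claims are immediate. For the inductive step, I view a model of $T_n$ as a model of $T_{n-1}$ on the sorts $X_1,\dots,X_{n-1}$, together with a sort $X_n$ on which $F(X_{n-1})$ acts freely. The completeness argument above, in which any permutation of $X_{n-1}$ induces isomorphisms between orbits, upgrades to a back-and-forth system. From it I would extract quantifier elimination after adding the inverse functions $a^{-1}\ast\cdot$, and the following description of definable closure. For a set $A$, the set $\mathrm{dcl}(A)$ is obtained by closing the $T_{n-1}$-part under $\mathrm{dcl}$, and then adding to $X_n$ all points $w\ast b$ with $b\in A\cap X_n$ and $w$ a word over $\mathrm{dcl}(A)\cap X_{n-1}$. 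Moreover $\mathrm{acl}=\mathrm{dcl}$.

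Types over a model $M$ then fall into two kinds. The first kind consists of types realised in $X_n$ by points $w\ast m$ with $m\in M$ and $w$ a word over new elements of $X_{n-1}$; such a type is determined by $\mathrm{tp}$ of the letters of $w$ over $M$ and by the orbit of $m$ modulo $F(M\cap X_{n-1})$. The second kind are the types of points lying in a new orbit. Counting these over $|M|$-sized models, using superstability of $T_{n-1}$ by induction, gives superstability of $T_n$.

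Next I would characterise forking. The natural guess is that $A\indep_M B$ holds if and only if $\mathrm{dcl}(MA)\cap\mathrm{dcl}(MB)=M$ and this persists at every sort level. This can be checked through the $X_n$-orbit structure. A point $a=w\ast m$ forks with $B$ exactly when the letters of $w$, read as a reduced word, are not independent from $B$ over $M$ in $T_{n-1}$, or when $a$ and $B$ share a new orbit. I would verify this using the Kim--Pillay style criteria, or by exhibiting the non-forking extensions as the ``free'' amalgams. Triviality then follows. If $a,b,c$ are pairwise independent over $M$, then any $\mathrm{dcl}$-connection between $a$ and $bc$ must pass through a single reduced word. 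Such a connection localises to $b$ or to $c$ using the induction hypothesis on the letters, since trivial theories are closed under this kind of free extension.

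For strict $2^{n-1}$-total triviality, the lower bound comes from a doubling construction. In $X_2$, take $a=(c_1c_2)\ast m$ with $c_1,c_2\in X_1$ new and independent. Then $a\indep_M c_1$ and $a\indep_M c_2$, but $a$ forks with $c_1c_2$, which witnesses failure of total triviality; this gives $k=2$ for $n=2$. Inductively, each letter $c_i\in X_{n-1}$ is replaced by a point $(d_{i,1}d_{i,2})\ast m_i$ built one level down. This produces $a\in X_n$ that is independent from any $2^{n-1}-1$ of the $2^{n-1}$ bottom parameters, but forks with all of them together.

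The upper bound is the main obstacle. I need to show that if $a\indep_M b_{\neq j}$ for each $j\in[2^{n-1}+1]$, then $a\indep_M b_1\cdots b_{2^{n-1}+1}$. The approach I would try first is to attach to a witness of forking a minimal ``support'' in $\{b_j\}$, and to show inductively that at level $n$ this support has size at most $2^{n-1}$. Each letter of the reduced word expressing $a$ needs at most $2^{n-2}$ of the $b_j$ by induction, and the reduced word can be cut to two letters whose combination already witnesses the dependence. Making this cutting rigorous, namely that a dependence on a long word is already witnessed by a bounded subword, is where I expect the real work to be. The statement for $T_\omega$ then follows: superstability and triviality are local to finitely many sorts, and the witnesses for $T_n$ embed into $T_\omega$, so $T_\omega$ is not $k$-totally trivial for any $k\in\N^+$.
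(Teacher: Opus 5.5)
The paper gives no proof of this statement; it is quoted from Goode. Your outlines for superstability and triviality are plausible. The total-triviality part, however, has a genuine gap, and its root is your description of $\mathrm{dcl}$.

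You take the part of $\mathrm{dcl}(A)$ below $X_n$ to be the closure of the part of $A$ below $X_n$. This omits the mechanism that drives the whole hierarchy. If $b,b'\in A\cap X_n$ lie in a common orbit, then all letters of the unique reduced $u$ with $u\ast b=b'$ lie in $\mathrm{dcl}(A)\cap X_{n-1}$. In general they do not lie in the closure of $A\cap(X_1\cup\cdots\cup X_{n-1})$.

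This is exactly why your lower-bound witness fails. With $m\in M$ and $a=(c_1c_2)\ast m$, freeness makes $c_1$ the unique $y\in X_1$ with $\exists z\in X_1\,(y\ast(z\ast m)=a)$. So $c_1\in\mathrm{dcl}(Ma)\setminus M$ and $a\not\indep_M c_1$. This even contradicts the forking criterion you state yourself.

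Correct witnesses run the other way round. In $T_2$, take $c\in X_1\setminus M$, $d_1\in X_2$ in an orbit disjoint from $M$, and $d_2:=c\ast d_1$. Right translation $w\ast d_1\mapsto wc\ast d_1$ on the orbit of $d_1$, identity elsewhere, is an automorphism over $Mc$ sending $d_1$ to $d_2$. Hence $c\indep_M d_1$ and $c\indep_M d_2$, whereas $c\in\mathrm{dcl}(d_1d_2)$. The doubling then goes upwards: replace each $d_i$ by a pair $e,\ d_i\ast e$ with $e$ in a fresh orbit of $X_3$, and so on. The result is that $c\in X_1$ is definable from $2^{n-1}$ points of $X_n$ while staying independent from every proper subfamily.

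The upper bound, which you flag as the main obstacle, is not established, and your heuristic puts the factor $2$ in the wrong place. It does not come from cutting the word of $a$ down to two letters. Those letters form a single tuple in $X_{n-1}$, and whether this tuple forks is not decided letter by letter.

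Instead, write $B$ for the set of components of the $b_j$. Over $M$ and the lower-sort components of the $b_j$, $\mathrm{dcl}(MB)\cap X_{n-1}$ is generated by the letters of words connecting two points of $(M\cup B)\cap X_n$ that lie in a common orbit. Each such generator therefore depends on at most two of the $b_j$.

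What you need is a correct description of types and forking over $MB$, including for tuples $a$ spread across several sorts. That description should reduce forking of $a$ with $b_1\cdots b_{k+1}$ to forking at level $n-1$ with these ``pair parameters''. Then $2^{n-2}$-total triviality of $T_{n-1}$ yields a witness using at most $2^{n-1}$ of the $b_j$.

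As it stands, neither half of ``strictly $2^{n-1}$-totally trivial'' is proved. Consequently the failure of $k$-total triviality for $T_\omega$, which rests on the lower-bound witnesses, is not established either.
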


Thus, $k$-triviality and total $k$-triviality are not equivalent, even among superstable theories. However, Goode showed the following collapse in superstable theories with finite $U$-ranks.
\begin{prop}[{\cite[Sections 1 and 2]{goode}}]\label{collapsefiniterank}
    Let $T$ be a superstable theory with finite $U$-ranks. Then the following are equivalent.
    \begin{enumerate}[(i)]
        \item $T$ is trivial.
        \item $T$ is totally trivial.
        \item $T$ is $k$-trivial for some $k\in\N^+$.
        \item $T$ is $k$-totally trivial for some $k\in\N^+$.
    \end{enumerate}
\end{prop}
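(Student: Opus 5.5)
The statement is Goode's Proposition \ref{collapsefiniterank}: for superstable theories with finite $U$-ranks, triviality, total triviality, $k$-triviality and $k$-total triviality (for some $k$) all coincide. I would prove it by establishing the implications
\[
\text{(ii)}\Rightarrow\text{(i)}\Rightarrow\text{(iii)},\qquad \text{(ii)}\Rightarrow\text{(iv)}\Rightarrow\text{(iii)},
\]
and then closing the circle with the two substantive implications (iii)$\Rightarrow$(i) and (i)$\Rightarrow$(ii).

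\emph{The easy implications.} Total triviality implies triviality. Indeed, given pairwise independence of $a,b,c$ over $D$, apply $1$-total triviality with $b_1=b$, $b_2=c$: we have $a\indep_D b$ and $a\indep_D c$, so $a\indep_D bc$. Next, $(k-1)$-triviality implies $k$-triviality, and $k$-total triviality implies $k$-trivial; both are noted in the text. This gives (ii)$\Rightarrow$(i)$\Rightarrow$(iii) and (ii)$\Rightarrow$(iv)$\Rightarrow$(iii).

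\emph{(iii)$\Rightarrow$(i).} Let $k$ be least with $T$ $k$-trivial, and suppose $k\geq 2$. Failure of $(k-1)$-triviality gives a $(k+1)$-cycle $(a_1,\dots,a_{k+1})$ over some $D$, which by Lemma \ref{lemmamodel} we may take to be a model $M$. The plan is to amalgamate copies of this cycle into a $(k+2)$-cycle, contradicting $k$-triviality. Finite rank is the lever here. Choose the cycle with $U(a_{k+1}/M)$ minimal, and replace $a_{k+1}$ by a tuple that is interalgebraic with a suitable element. Then take $a_{k+1}'\equiv_{Ma_1\cdots a_{k-1}} a_{k+1}$ with $a_{k+1}'\indep_{Ma_1\cdots a_{k-1}} a_ka_{k+1}$, and consider the sequence $(a_1,\dots,a_{k-1},a_k,a_{k+1},a'_{k+1})$, or a merged variant of it. Rank additivity in finite $U$-rank should show that exactly the whole family is dependent while every proper subfamily is independent. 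I expect this step to be the main obstacle: arranging that every proper subsequence remains independent requires a careful choice that minimises ranks, and probably the use of canonical bases.

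\emph{(i)$\Rightarrow$(ii).} Suppose $T$ is trivial but $a\indep_M b$ and $a\indep_M c$ while $a\not\indep_M bc$. Induct on $U(a/M)$, which is finite. Decompose $a$ through a tuple $a_0\in\mathrm{acl}(a)$ with $U(a_0/M)$ of rank one, using a coordinatisation by rank-one types available in finite rank. Triviality makes forking among rank-one elements disintegrated, so $a_0\indep_M b,\ c$ yields $a_0\indep_M bc$ by a pairwise argument. One still has to check that $b\indep_M c$ can be arranged, or else replace $b$ by a realisation over which this holds, using extension. Then reduce to the base $Ma_0$ and apply induction to $a$ over $Ma_0$, using Corollary \ref{baseextension} to transfer the independences. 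This part is likely routine given the rank-one analysis.
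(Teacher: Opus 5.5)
\textbf{Where the difficulty lies.} The paper quotes this proposition from Goode and gives no proof of its own. Your proposal places the difficulty in the opposite spot from where it actually lies.

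\textbf{On (iii)$\Rightarrow$(i).} This implication needs no rank hypothesis at all: it is exactly Theorem \ref{collapse}, which holds for every simple theory, so you can simply cite it. Your own sketch would not work as written. The sequence $(a_1,\dots,a_{k+1},a'_{k+1})$ contains the original cycle $(a_1,\dots,a_{k+1})$ as a proper subsequence that is not $M$-independent, so it can never be a $(k+2)$-cycle, and the ``merged variant'' is never specified. The paper's mechanism is different. It takes many copies of $(b_1,\dots,b_{k+1})$ that are independent over $Da$, uses the absence of larger cycles to force the whole family to be $D$-independent, and then contradicts dividing via Fact \ref{morley}. The larger cycle lives among copies of the $b$'s, not among the original tuple plus a copy of one member.

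\textbf{The genuine gap is in (i)$\Rightarrow$(ii).} This is exactly where finite rank is needed, yet you call it routine. Your step ``$a_0\indep_M b$ and $a_0\indep_M c$ with $U(a_0/M)=1$ give $a_0\indep_M bc$ by a pairwise argument'' is total triviality for a rank-one $a_0$, and triviality does not yield it. Disintegration of rank-one pregeometries concerns $a_0\in\mathrm{acl}(MB)$ for $B$ a set of \emph{rank-one} elements, not arbitrary tuples $b,c$. Nor can you ``arrange $b\indep_M c$ by extension'': $b\not\indep_M c$ is part of $\tp(a_0bc/M)$, and replacing $b$ destroys $a_0\not\indep_M bc$.

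Goode's $T_2$ gives a concrete counterexample to your step; it is superstable and trivial by Theorem \ref{goodeclassification}. Take $a\in X_1\setminus M$, $b\in X_2$ in an orbit disjoint from $M$, and $c:=a\ast b$. Then $U(a/M)=1$, $a\indep_M b$ and $a\indep_M c$, yet $a\in\mathrm{dcl}(bc)$. So any correct proof must exploit that $b$ and $c$, not just $a$, have finite rank, and your argument never does.

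\textbf{The missing idea is one-basedness.}
\begin{enumerate}[(1)]
    \item Triviality makes every $U$-rank-one type disintegrated, hence locally modular.
    \item In a superstable theory with all $U$-ranks finite, this forces $T$ to be one-based (a standard result going back to Hrushovski).
    \item Put $E:=\mathrm{acl}^{\mathrm{eq}}(Mb)\cap\mathrm{acl}^{\mathrm{eq}}(Mc)$. One-basedness gives $b\indep_E c$.
    \item From $a\indep_M b$ and $a\indep_M c$ you get $a\indep_M E$, $a\indep_E b$ and $a\indep_E c$.
    \item Triviality over $E$ then yields $a\indep_E bc$, and hence $a\indep_M bc$.
\end{enumerate}
This is Goode's Proposition 8 (stable, one-based and trivial implies totally trivial), which the paper itself invokes in proving Proposition \ref{reducts}. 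It also shows that $T_2$ is not one-based, which is precisely how $T_2$ escapes. With this route, your reduction from $a$ to a rank-one coordinate $a_0$ becomes unnecessary.
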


Without the assumption of finite $U$-ranks, conditions (i) and (iii) are still equivalent. (Indeed, observe that, as $n$ increases, $T_n$ goes up the total triviality hierarchy but remains (1-)trivial.)
\begin{prop}[{\cite[Proposition 3]{goode}}]
    Let $T$ be superstable. For all $k\in\N^+$, $T$ is (1-)trivial if and only if it is $k$-trivial.
\end{prop}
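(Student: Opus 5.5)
The forward direction is immediate: if $T$ is trivial and $(a_1,\dots,a_{k+2})$ is a sequence over $D$ all of whose proper subsequences are $D$-independent, then in particular every pair is independent. Induction on the length, using triviality (the case of three tuples) together with Lemma \ref{independencebasicprop}(ii), then shows the whole sequence is $D$-independent. So there are no $(k+2)$-cycles, and the real content is the converse. I would prove the contrapositive: if $T$ is superstable and not trivial, then for every $k$ there is a $(k+2)$-cycle. By Lemma \ref{lemmamodel}, I may work throughout over a small model $M$, and types over $M$ are stationary by Lemma \ref{typesovermodels}.

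Start from a $3$-cycle $(a,b,c)$ over $M$. I would choose it with a minimality condition that superstability makes available. Since $c\not\indep_M ab$, the value $U(c/Mab)$ is strictly below $U(c/M)$ for some ordinal ranks. Among all $3$-cycles over models, I would pick one minimising $U(c/M)$; I expect to need to replace $c$ by a suitable element of its forking-dependence set, arranging that $\tp(c/M)$ is regular of rank $\omega^\alpha$. The point of regularity is that forking with $c$ is all-or-nothing: any set either is independent from $c$ or dominates it.

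The key step is a \emph{composition} construction that lengthens cycles. Given a $(m+2)$-cycle $(a_1,\dots,a_{m+1},c)$ in which $c$ is the regular element, I would take a conjugate $3$-cycle $(c',d,e)$ with $c'=c$, amalgamated independently over $Mc$ (using \textsc{Existence}, \textsc{Extension} and stationarity). I then claim that $(a_1,\dots,a_{m+1},d,e)$ is an $(m+3)$-cycle. The verification splits into two parts.
\begin{enumerate}[(a)]
\item Each proper subsequence is independent. Dropping $d$ or $e$ leaves the remaining entry independent of $c$, hence of everything else, by the independent amalgamation over $c$ and transitivity. Dropping some $a_i$ leaves $a_{\neq i}$ independent from $c$, and this combines with $de\indep_{Mc}a_{\le m+1}$ to give independence.
\item The whole sequence is dependent. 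Here I use that $a_1\cdots a_{m+1}$ forks with the regular type of $c$ and so dominates it. Likewise $de$ forks with $c$. Domination over independent sets then forces $a_{\le m+1}\not\indep_M de$.
\end{enumerate}
Iterating this $k$ times from the initial $3$-cycle yields a $(k+2)$-cycle, contradicting $k$-triviality.

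I expect step (b) to be the main obstacle, namely making the dependence survive the composition. Without regularity, $a_{\le m+1}$ and $de$ could each fork with $c$ only along "different parts" of $\tp(c/M)$ and end up independent. The place where superstability genuinely enters is therefore the preliminary reduction to a cycle whose pivot $c$ has a regular type, for instance by decomposing $\tp(c/M)$ via weight or $U$-rank and passing to a regular component that still witnesses non-triviality. If that reduction proves awkward, my fallback is to minimise $U(c/M)$ directly and argue that a failure of (b) would produce a $3$-cycle whose pivot has strictly smaller rank, which contradicts minimality.
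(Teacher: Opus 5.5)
\textbf{The gap is in the iteration.} Your forward direction and your check of (a) are fine. Step (b) is correct when $\tp(c/M)$ has weight one. Your fallback also proves (b) when $U(c/M)$ is minimal among members of $3$-cycles over arbitrary small bases: if $a_{\le m+1}\indep_M de$, then $(c,d,e)$ is a $3$-cycle over $Ma_{\le m+1}$ with $U(c/Ma_{\le m+1})<U(c/M)$.

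Step (b) needs a hypothesis on the \emph{pivot}, but the composition deletes the pivot. The new cycle $(a_1,\dots,a_{m+1},d,e)$ consists of the old non-pivot members together with $d$ and $e$. These realize the types of the two non-pivot members of your initial $3$-cycle, and your reduction controls only the pivot. So you obtain a $4$-cycle and then have no member at which (b) is justified.

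The hypothesis cannot simply be dropped at later stages. In $\mathrm{ACF}$, let $z=(z_1,z_2)$ be generic over $M$, let $x$ be generic over $Mz$, and put $y=z_1x+z_2$. Then $(x,y,z)$ is a $3$-cycle over $M$. Gluing it along $z$ with an $Mz$-independent copy $(x',y',z)$ gives an $M$-independent $4$-tuple $(x,y,x',y')$, because $M(x,y,x',y')=M(z,x,x')$ has transcendence degree $4$.

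Separately, the preliminary reduction to a regular pivot is only announced, and it is not routine. A naive splitting of $c$ into regular pieces can lose the forking with $ab$. For example, in a vector space take $a,b$ independent generics and $c=(c_1,a+b-c_1)$ with $c_1$ generic over $Mab$. Then $(a,b,c)$ is a $3$-cycle, yet each coordinate of $c$ is independent from $ab$ over $M$.

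\textbf{What a repair needs.} You need a gluing gadget that always leaves a good member behind. One option is a $3$-cycle of realizations of a single regular type; showing such a cycle exists in every non-trivial superstable theory is itself a real reduction. Another option, in your rank-minimising version, is a cycle with two members of minimal rank, which can be manufactured as follows.
\begin{enumerate}[(1)]
\item Glue $(c,d,e)$ to an $Md$-independent copy $(c',d,e')$ along $d$.
\item If $(c,e,c',e')$ is not a $4$-cycle, then $(c',d,e')$ is a $3$-cycle over $Mce$ with $U(c'/Mce)=U(c/M)$ but $U(d/Mce)<U(d/M)$. Repeat step (1) over the new base.
\item Since $U(d/\cdot)$ strictly drops at each failure, this terminates in a $4$-cycle with two members of minimal rank and the same type over the current base.
\item Gluing copies of this $4$-cycle at a member of that type always leaves another such member, so the construction iterates.
\end{enumerate}

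\textbf{Comparison with the paper.} The paper never builds long cycles. It deduces the statement from Theorem \ref{collapse}, which holds for all simple theories. That proof assumes $(k+1)$-triviality and a $(k+2)$-cycle $(a,b_1,\dots,b_{k+1})$. It then builds a long $D$-independent family of $Da$-conjugates of $(b_1,\dots,b_{k+1})$, with independence forced by $(k+1)$-triviality through a minimal counterexample. Finally it applies Fact \ref{morley} to an extracted Morley sequence. No ranks, weight or regular types are used.
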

Goode asks if the assumption of superstability can be weakened to stability.
\begin{question}[{\cite[Section 5]{goode}}]\label{goodequestionbody}
    Among stable theories, is (1-)triviality equivalent to $k$-triviality for all $k\in\N^+$?
\end{question}
We resolve this question positively in Theorem \ref{collapse}, even among simple theories.

Towards this, recall the following classical characterisation of dividing in simple theories. Given a small set $D\subseteq \monster$, we say that a sequence of tuples from $\monster$ is a \textit{Morley sequence over $D$} if it is $D$-indiscernible and $D$-independent.
\patchcmd{\thmhead}{(#3)}{#3}{}{}
\begin{fact}[{\cite[Proposition 7.2.14]{tentziegler}}]\label{morley}
    ($T$ is simple.) Let $D\subseteq \monster$ be small and $\pi(x,y)$ be a partial type over $D$. Suppose there is a Morley sequence $(b_i: i<\omega)$ over $D$ such that $\{\pi(x;b_i):i<\omega\}$ is consistent. Then $\pi(x,b_0)$ does not divide over $D$.
\end{fact}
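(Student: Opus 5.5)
The plan is to reduce to a single formula, realise the whole family by some $c$, and then show $c\indep_D b_0$. Since $\models\varphi(c,b_0)$ and $\tp(c/Db_0)$ will not divide over $D$, it follows that $\varphi(x,b_0)$ does not divide over $D$. The key point is to arrange the indiscernibility so that a finite-satisfiability argument yields the needed independence.

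First, the reduction. Suppose towards a contradiction that $\pi(x,b_0)$ divides over $D$. By compactness some finite conjunction $\varphi(x,b_0)$ of formulas of $\pi$ divides over $D$. It therefore suffices to show that if $\{\varphi(x,b_i):i<\omega\}$ is consistent along a Morley sequence $(b_i:i<\omega)$ over $D$, then $\varphi(x,b_0)$ does not divide over $D$.

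Next, stretch the sequence. By compactness and $D$-indiscernibility, replace the sequence by a Morley sequence over $D$ indexed by $\omega^*+1$ (a reversed copy of $\omega$ followed by one last point), written $J^\frown b_0$ with $J$ infinite. Consistency of $\{\varphi(x,b)\colon b\in J b_0\}$ is preserved, since it is a property of the type over $D$ of the sequence. Realise all these formulas by some $c$. Then apply the standard Ramsey and compactness extraction: the extracted sequence realises the same type over $D$ (so it is still Morley over $D$) and $c$ still satisfies every $\varphi(x,b)$. After an automorphism over $Dc$ we may therefore assume that $J^\frown b_0$ is indiscernible over $Dc$.

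The key step is to show $b_0\indep_{DJ} c$. Because $J^\frown b_0$ is indiscernible over $Dc$ and $J$ has no last element, every formula in $\tp(b_0/DJc)$ is satisfied by some element of $J$. That is, $\tp(b_0/DJc)$ is finitely satisfiable in $J$. A type finitely satisfiable in $J\subseteq DJ$ does not divide over $DJ$. Indeed, if $\psi(y,e)$ is in it and $(e_i)$ is $DJ$-indiscernible with $e_0=e$, then a witness $b\in J$ of $\psi(y,e)$ satisfies $\psi(b,e_i)$ for all $i$ by indiscernibility over $J$. The same argument applies over every extension of $DJc$, so the type does not fork over $DJ$. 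Thus $b_0\indep_{DJ} c$.

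To conclude, $b_0\indep_D J$ holds because the sequence is Morley over $D$. \textsc{Monotonicity and Transitivity} then gives $b_0\indep_D Jc$, hence $b_0\indep_D c$, and \textsc{Symmetry} gives $c\indep_D b_0$. Since $\models\varphi(c,b_0)$, the formula $\varphi(x,b_0)$ lies in a type over $Db_0$ that does not fork over $D$, so it does not divide over $D$. This is the desired contradiction.

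I expect the main obstacle to be the extraction step. One must check carefully that the Ramsey and compactness extraction preserves simultaneously the Morley property over $D$ and the realisation by $c$. One must also check that the index order is chosen so that $b_0$ is a limit of $J$ (no last element of $J$). Otherwise finite satisfiability fails.
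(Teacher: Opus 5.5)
The overall strategy works, but the order type you chose breaks the key step. A reversed copy of $\omega$ already has a last element, so $\omega^*+1\cong\omega^*$. Your $J$ therefore has a last element $j$, and $b_0$ is its immediate successor, not a limit of $J$. Finite satisfiability of $\tp(b_0/DJc)$ in $J$ then genuinely fails. Suppose some $L(D)$-formula $\theta(u,v)$ holds exactly on increasing pairs of the sequence, as happens for Morley sequences in unstable simple theories such as the random graph. Then $\theta(j,y)\in\tp(b_0/DJc)$ has no solution in $J$.

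The fix is to index by $\omega+1$. Then $J=(b_n)_{n<\omega}$ has no last element, indiscernibility over $Dc$ gives finite satisfiability in $J$, and the Morley condition at index $\omega$ is exactly $b_\omega\indep_D J$. After extraction you also need no automorphism (and if you used one, it would have to be over $D$, moving $c$, not over $Dc$). Just work with the extracted sequence, whose last term has the same type over $D$ as the original $b_0$.

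With that repair, your argument is valid relative to the forking calculus the paper lists in Section 2.1. Note, though, that it uses \textsc{Symmetry} twice. The first use is the final flip. The second is to get $b_\omega\indep_D Jc$ from $b_\omega\indep_D J$ and $b_\omega\indep_{DJ} c$: the paper's \textsc{Transitivity} varies the left-hand side, and this base transitivity fails without symmetry (e.g.\ in dense linear orders).

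The paper itself only cites Tent--Ziegler. In the usual development this statement is Kim's lemma, and \textsc{Symmetry} is deduced from it. So your coheir argument cannot serve as its foundational proof; a standard proof must avoid symmetry. A symmetry-free argument runs as follows:
\begin{enumerate}[(i)]
\item Stretch the Morley sequence so it is indexed by the reverse of $\kappa=|T|^+$, and realise $\bigcup_i\pi(x,b_i)$ by $c$.
\item By local character, there is a set $I_0$ of at most $|T|$ terms of the sequence $I$ such that $\tp(c/DI)$ does not fork over $DI_0$.
\item Pick a term $b_\gamma$ preceding all of $I_0$. The Morley condition plus left transitivity give $I_0\indep_D b_\gamma$.
\item Hence any $D$-indiscernible sequence starting at $b_\gamma$ can be moved over $Db_\gamma$ to a $DI_0$-indiscernible one. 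Along it $\pi$ is consistent, because $\tp(c/DI_0b_\gamma)$ does not divide over $DI_0$.
\end{enumerate}
Your route is shorter and more transparent but presupposes symmetry. The other uses only local character and symmetry-free properties of forking, which is what allows symmetry to be derived afterwards. That is also where a reversed order genuinely belongs: there the chosen term must precede a small set, whereas in your argument it must be a limit of the terms before it.
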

\patchcmd{\thmhead}{#3}{(#3)}{}{}
We will need the following Ramsey fact, attributed to Shelah.
\patchcmd{\thmhead}{(#3)}{#3}{}{}
\begin{fact}[{\cite[Lemma 7.2.12]{tentziegler}}]\label{extraction}
    For all small $D\subseteq \monster$, there is a cardinal $\lambda$ such that the following holds. Let $(b_i: i<\lambda)$ be a sequence of tuples from $\monster$. Then there is a $D$-indiscernible sequence $(c_j: j<\omega)$ such that, for all $j_1<\cdots<j_n<\omega$, there are $i_1<\cdots<i_n<\lambda$ such that $b_{i_1}\cdots b_{i_n}\equiv_D c_{j_1}\cdots c_{j_n}$.
\end{fact}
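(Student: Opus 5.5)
The statement is a purely combinatorial extraction result, so the plan is the standard Erdős–Rado argument; simplicity of $T$ plays no role. Set $\kappa:=2^{|T|+|D|}$, which bounds the number of complete types over $D$ in any fixed finite number of variables, and take $\lambda:=\beth_{\kappa^+}$. The tool is the Erdős–Rado theorem in the form $\beth_{n-1}(\mu)^+\to(\mu^+)^n_\mu$. For each $n$ and each infinite cardinal $\theta$, I would use it in the following consequence: any sequence of length $\beth_{n}(\theta)$ contains an increasing subsequence of length $\theta^+$ on which all increasing $n$-tuples have the same type over $D$, provided $\theta\geq\kappa$. Here colourings are by types over $D$, so at most $\kappa$ colours.

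The plan is to construct, by induction on $n\geq 1$, complete types $q_n(x_1,\dots,x_n)$ over $D$ and cofinal sets $S_n\subseteq\kappa^+$ with $S_{n+1}\subseteq S_n$. For every $\xi\in S_n$ we want an increasing subsequence $B_{n,\xi}$ of $(b_i)_{i<\lambda}$ of length at least $\beth_\xi$ that is $n$-homogeneous of type $q_n$. We also require $B_{n+1,\xi}\subseteq B_{n,\xi}$, so that the homogeneity from earlier stages is preserved; for this it is convenient to let $B_{n,\xi}$ have length $\beth_{\xi+\omega}$ minus a shift, or simply to keep the index slack large.

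For the inductive step, fix $\xi\in S_n$ and apply Erdős–Rado to $B_{n,\xi}$, which has length at least $\beth_{n+1}(\beth_\xi)$ after the slack is arranged. This yields a subsequence of length $\beth_\xi^+$ that is $(n+1)$-homogeneous, with some type $r_\xi$. There are at most $\kappa$ possible values of $r_\xi$ and $\kappa^+$ is regular, so some value $q_{n+1}$ occurs for cofinally many $\xi$; those $\xi$ form $S_{n+1}$. The types cohere automatically: the restriction of $q_{n+1}$ to any $n$ of its variables, in increasing order, is $q_n$, because the homogeneous set lies inside $B_{n,\xi}$. The main bookkeeping obstacle is managing the cardinal slack so that each stage still has enough length to apply Erdős–Rado again. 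I would handle this by indexing with $\xi+\omega\cdot$ levels, or by choosing $S_n$ among limit ordinals and using $\beth_{\xi+k}$ for the relevant $k$.

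Finally, let $\Sigma$ be the set of formulas in variables $(y_j)_{j<\omega}$ over $D$ consisting of $q_n(y_{j_1},\dots,y_{j_n})$ for all $n$ and all $j_1<\dots<j_n$. Finite subsets of $\Sigma$ are satisfied by increasing tuples from any $B_{n,\xi}$ with $n$ large, so $\Sigma$ is consistent, and saturation of $\monster$ gives a realisation $(c_j)_{j<\omega}$. This sequence is $D$-indiscernible, since every increasing $n$-tuple has type $q_n$. Moreover, each $q_n$ is realised by some increasing $n$-tuple $b_{i_1}\cdots b_{i_n}$ from the original sequence, which is exactly the required conclusion.
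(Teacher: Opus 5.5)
You are following the right route. The paper gives no proof of its own and simply cites Tent--Ziegler, whose argument is the one you outline: colour increasing tuples by their types over $D$, apply Erd\H{o}s--Rado, build coherent types $q_n$ using the regularity of $\kappa^+$, and finish by compactness. Your pigeonhole step, the coherence argument and the final realisation of $\Sigma$ are fine. The problem is the bookkeeping, which you flag but do not resolve. As you set it up, it does not close.

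You keep the index $\xi$ fixed and require $B_{n+1,\xi}\subseteq B_{n,\xi}$. Applying Erd\H{o}s--Rado to $B_{n,\xi}$ of size $\beth_{n+1}(\beth_\xi)$ returns $B_{n+1,\xi}$ of size only $\beth_\xi^+$. That is far short of the $\beth_{n+2}(\beth_\xi)$ you need to repeat the step at the same index. In general, each application lowers the guaranteed size strictly, from $\beth_n(\theta)^+$ down to $\theta^+$. So any fixed slack attached to $\xi$, whether $\beth_{\xi+k}$ or $\beth_{\xi+\omega}$, is used up after finitely many stages. No way of ``keeping the slack large'' along a fixed index survives all $n$.

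The fix is to drop the fixed-index nesting, which you do not actually need. Coherence of $q_{n+1}$ with $q_n$ only requires the new homogeneous set to lie inside \emph{some} $q_n$-homogeneous subsequence. Proceed as follows:
\begin{enumerate}[(i)]
    \item For each $\eta$ with $\kappa\le\eta<\kappa^+$ (so that $\beth_\eta\geq\kappa$), pick $\xi\in S_n$ with $\xi\ge\eta+n+1$. Then $B_{n,\xi}$ has size at least $\beth_{\eta+n+1}=\beth_{n+1}(\beth_\eta)$.
    \item Erd\H{o}s--Rado yields an $(n+1)$-homogeneous subsequence of $B_{n,\xi}$ of size $\beth_\eta^+$. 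Its type $r_\eta$ restricts to $q_n$ on increasing $n$-subtuples.
    \item Pigeonhole over $\eta$ then gives $q_{n+1}$ exactly as you describe.
\end{enumerate}
A subsequence of a $q_n$-homogeneous sequence is again $q_n$-homogeneous. Hence ``for cofinally many $\xi$'' already gives ``for every $\xi<\kappa^+$'', and the sets $S_n$ and the condition $S_{n+1}\subseteq S_n$ can be discarded. The base case $n=1$ is plain pigeonhole, since $\operatorname{cf}(\beth_{\kappa^+})=\kappa^+>\kappa$. With this re-indexing your argument is complete. Your choice $\kappa=2^{|T|+|D|}$ presupposes finite tuples; for tuples of infinite length $\mu$, use $2^{|T|+|D|+\mu}$ instead.
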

\patchcmd{\thmhead}{#3}{(#3)}{}{}
We are ready to prove our main result.
\begin{theorem}\label{collapse}
    ($T$ is simple.) For all $k\in\N^+$, $T$ is (1-)trivial if and only if it is $k$-trivial.
\end{theorem}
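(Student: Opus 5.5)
The direction ``trivial $\Rightarrow$ $k$-trivial'' is already noted after Definition \ref{defnktrivial}, so only the converse needs work. I would argue by contraposition: assuming $T$ is not trivial, I would produce, for each $k$, a $(k+2)$-cycle over some small model.

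First, a reduction. If there is an $m$-cycle for some $m\geq k+2$, then $T$ is not $k$-trivial. Indeed, if $T$ were $k$-trivial, then by induction on the size of subsequences (using Lemma \ref{independencebasicprop}) every subsequence of length $k+2, k+3, \ldots$ would be independent, so the whole sequence would be too. Hence it suffices to show the following: if there is an $n$-cycle over a small model $M$ (available by Lemma \ref{lemmamodel}), then there is an $(n+1)$-cycle over some small model. Starting from a $3$-cycle coming from non-triviality, this gives cycles of every length.

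The lengthening step would go as follows. Let $(a_1,\dots,a_n)$ be an $n$-cycle over $M$. Put $e:=a_1\cdots a_{n-1}$, so that $e$ is $M$-independent but $a_n\not\indep_M e$. Choose a Morley sequence $(a_n^i: i<\omega)$ over $M$ in $\tp(a_n/M)$ whose members are in addition independent from $e$ in a controlled way. Using Fact \ref{extraction}, I would make this sequence indiscernible over $Me$ while preserving the finitely many independence and dependence relations involved; these are type-definable conditions over $M$, so by Corollary \ref{baseextension} they transfer along non-forking extensions.

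From this sequence I would form a new candidate cycle in which one coordinate is split into two pieces. For example, replace $a_n$ by a pair $a_n^0, a_n^1$ and replace $a_{n-1}$ by a suitable combination with $a_n^0 a_n^1$, so that the failure of independence is only witnessed once all $n+1$ pieces are present. Fact \ref{morley} enters here. If all $n$-element subfamilies were independent but the sequence were not Morley-consistent in the right way, then the forking formula witnessing $a_n\not\indep_M e$ would be consistent along a Morley sequence, contradicting dividing. Conversely, the dependence of the whole tuple should follow by transferring $a_n\not\indep_M e$ along the indiscernible sequence.

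The main obstacle is this splitting step. One must arrange simultaneously that every proper subfamily of the new $(n+1)$-tuple is independent and that the full tuple is dependent. Doing this without ranks, as in Goode's superstable argument, is the crux. My first attempt would be to take the base to be $M$ extended by one element of the Morley sequence. Strong base monotonicity would then let me move between $M$ and $Ma_n^0$, and the dividing characterisation of Fact \ref{morley} would guarantee that the dependence does not disappear in the limit.
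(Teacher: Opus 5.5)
There is a genuine gap. Your outer reduction is sound: the easy direction, the fact that an $m$-cycle with $m\ge k+2$ already refutes $k$-triviality, and hence that it suffices to turn any $n$-cycle over $M$ into a strictly longer cycle (exact length $n+1$ is not needed). But that lengthening step is the entire content of the theorem, and you do not supply it. What you do sketch points the wrong way.

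You copy the single coordinate $a_n$ over $e=a_1\cdots a_{n-1}$.
\begin{itemize}
\item If the copies realise $\tp(a_n/Me)$, an $Me$-indiscernible Morley sequence of them over $M$ cannot exist. A formula of $\tp(e/Ma_n)$ dividing over $M$ would be consistent along it, contradicting Fact \ref{morley}.
\item If instead you make the copies independent from $e$, they no longer carry the dependence $a_n\not\indep_M e$.
\item Copies of one coordinate carry no cycle structure anyway. In an infinite $\mathbb{F}_2$-vector space with $a_1,a_2$ independent over $M$ and $a_3=a_1+a_2$, we have $a_3\in\mathrm{dcl}(Ma_1a_2)$, so $a_3$ is its own only copy over $Me$.
\end{itemize}
Finally, ``replacing $a_{n-1}$ by a suitable combination with $a_n^0a_n^1$'' has no meaning in a general simple theory beyond concatenation, which does not increase the number of pieces. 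So no candidate $(n+1)$-tuple is ever specified, let alone checked.

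The missing idea is to copy the complementary tuple $e$ over the single coordinate $a_n$, and to use Fact \ref{morley} as an obstruction rather than a construction.
\begin{itemize}
\item Fix $\phi(x;e)\in\tp(a_n/Me)$ dividing over $M$.
\item Build blocks $e^{(i)}=(a^{(i)}_1,\dots,a^{(i)}_{n-1})$ for $i<\lambda$, each realising a non-forking extension of $\tp(e/Ma_n)$ to $Ma_n\cup\{e^{(i')}:i'<i\}$.
\item For each $s\in[n-1]$, $\{a_r:r\in[n-1]\setminus\{s\}\}\indep_M a_n$, since this is a proper subsequence of the cycle. Transitivity then shows inductively that each $\{a^{(i)}_r:r\in[n-1]\setminus\{s\}\}$ is independent over $M$ from $a_n$ together with all the other blocks.
\item Suppose the flattened family $(a^{(i)}_s)$ were $M$-independent. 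Extracting an $Ma_n$-indiscernible sequence via Fact \ref{extraction} would give a Morley sequence over $M$ along which $\phi(x;\cdot)$ is realised by $a_n$, contradicting Fact \ref{morley}.
\item So some finite set of blocks $(e^{(i)}:i\in I)$ is dependent. Take $|I|$ minimal; necessarily $|I|\ge 2$. By Lemma \ref{independencebasicprop}(ii), deleting any single $a^{(i_0)}_s$ leaves an independent family.
\item You therefore obtain a cycle of length $(n-1)|I|\ge 2(n-1)>n$.
\end{itemize}
This is the paper's argument, phrased there as a contradiction with $(k+1)$-triviality, and it plugs directly into your reduction.
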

\begin{proof}
    We will show that, for all $k\geq 1$, if $T$ is not $k$-trivial then it is not $(k+1)$-trivial.

    Fix $k\geq 1$, and suppose $T$ is $(k+1)$-trivial but not $k$-trivial. Thus, we may fix a $(k+2)$-cycle $(a, b_1, ..., b_{k+1})$ over some small $D\subseteq \monster$. Since $a\not\indep_D b_1\cdots b_{k+1}$, there is an $L(D)$-formula $\phi(x; y_1, ..., y_{k+1})$ such that $\models \phi(a; b_1, ..., b_{k+1})$ and $\phi(x;b_1, ..., b_{k+1})$ divides over $D$.

    Let $\lambda$ be a cardinal given by Fact \ref{extraction} for the set $Da\subseteq \monster$. We prove the following claim.
    \begin{claim*}
        There are tuples $b^{(i)}_1, ..., b^{(i)}_{k+1}$ for $i<\lambda$, where $(b_1^{(0)}, ..., b^{(0)}_{k+1})=(b_1, ..., b_{k+1})$, such that the following conditions hold for all $j<\lambda$.
        \begin{enumerate}[(i)]
            \item The sequence $(b^{(i)}_1, ..., b^{(i)}_{k+1}: i<j+1)$ is $D$-independent (where, for each $i<j+1$ and $s\in [k+1]$, $b^{(i)}_s$ is treated as an individual tuple).
            \item For all $e<j+1$ and $s\in [k+1]$,
            \[\left\lbrace b^{(e)}_r: r\in [k+1]\setminus\{s\}\right\rbrace\underset{D}{\indep} a \left\lbrace b^{(i)}_1, ..., b^{(i)}_{k+1}: i<j+1, i\neq e\right \rbrace.\]
            \item For all $i<j+1$, $(b^{(i)}_1, ..., b^{(i)}_{k+1})\equiv_{Da} (b^{(0)}_1, ..., b^{(0)}_{k+1})$.
        \end{enumerate}
    \end{claim*}
    Before proving the claim, let us show how it can be used to prove the statement. Fix tuples $b^{(i)}_1, ..., b^{(i)}_{k+1}$ for $i<\lambda$ as in the claim. By Fact \ref{extraction}, there is a $Da$-indiscernible sequence $((c^{(j)}_1, ..., c^{(j)}_{k+1}): j<\omega)$ --- where, for each $j<\omega$, $(c^{(j)}_1, ..., c^{(j)}_{k+1})$ is treated as one element of the sequence --- such that, for all $n<\omega$, there are $i_1<\cdots<i_n<\lambda$ such that
    \[\left(b^{(i_1)}_1, ..., b^{(i_1)}_{k+1}, ..., b^{(i_n)}_1, ..., b^{(i_n)}_{k+1}\right)\equiv_{Da} \left(c^{(1)}_1, ..., c^{(1)}_{k+1}, ..., c^{(n)}_1, ..., c^{(n)}_{k+1}\right).\]
    In particular, by condition (i) and \textsc{Invariance}, $((c^{(j)}_1, ..., c^{(j)}_{k+1}): j<\omega)$ is $D$-independent, so is a Morley sequence over $D$. By condition (iii), $\models \phi(a; c^{(j)}_1, ..., c^{(j)}_{k+1})$ for all $j<\omega$, and so by Fact \ref{morley} we have that $\phi(x;c^{(0)}_1, ..., c^{(0)}_{k+1})$ does not divide over $D$. But now $(c^{(0)}_1, ..., c^{(0)}_{k+1})\equiv_D (b_1, ..., b_{k+1})$, and so $\phi(x;b_1, ..., b_{k+1})$ does not divide over $D$, which is a contradiction.

    Let us now prove the claim, constructing $b^{(j)}_1, ..., b^{(j)}_{k+1}$ by induction on $j<\lambda$. When $j=0$, set $(b_1^{(0)}, ..., b^{(0)}_{k+1})=(b_1, ..., b_{k+1})$. Since $(b_1, ..., b_{k+1})$ is $D$-independent, condition (i) is satisfied. For all $s\in [k+1]$, we have that $(a, b_1, ..., b_{s-1}, b_{s+1}, ..., b_{k+1})$ is $D$-independent, so condition (ii) is satisfied. Condition (iii) is trivially satisfied.

        Suppose now that $j>0$ and we have defined $b^{(i)}_1, ..., b^{(i)}_{k+1}$ for $i<j$. Let $q$ be a non-forking extension of $\tp(b^{(0)}_1, ..., b^{(0)}_{k+1}/Da)$ over $Da\cup\{b_1^{(i)}, ..., b^{(i)}_{k+1}: i<j\}$, and let $(b_1^{(j)}, ..., b^{(j)}_{k+1})\models q$. Let us check that the three conditions are satisfied.

        Condition (iii) is trivially satisfied. For condition (ii), let us first consider the case that $e=j$; fix $s\in [k+1]$. Firstly, observe that since $(b_1^{(j)}, ..., b^{(j)}_{k+1})\models q$ and $q$ does not fork over $Da$,
        \[\left\lbrace b^{(j)}_r: r\in [k+1]\setminus\{s\}\right\rbrace\underset{Da}{\indep} \left\lbrace b^{(i)}_1, ..., b^{(i)}_{k+1}: i< j\right \rbrace.\]
        Since $\{b^{(0)}_r: r\in [k+1]\setminus\{s\}\}\indep_D a$, we have $\{b^{(j)}_r: r\in [k+1]\setminus\{s\}\}\indep_D a$ by \textsc{Invariance}. By \textsc{Transitivity} and \textsc{Symmetry}, we obtain
        \[\left\lbrace b^{(j)}_r: r\in [k+1]\setminus\{s\}\right\rbrace\underset{D}{\indep} a\left\lbrace b^{(i)}_1, ..., b^{(i)}_{k+1}: i< j\right \rbrace.\]
        
        This establishes condition (ii) in the case $e=j$. Suppose now $e<j$, and fix $s\in [k+1]$. Firstly, observe that since $(b_1^{(j)}, ..., b^{(j)}_{k+1})\models q$ and $q$ does not fork over $Da$,
        \[\left\lbrace b^{(j)}_1, ..., b^{(j)}_{k+1}\right\rbrace\underset{Da}{\indep} \left\lbrace b^{(i)}_1, ..., b^{(i)}_{k+1}: i< j, i\neq e\right \rbrace\cup \left\lbrace b^{(e)}_r: r\in [k+1]\setminus \{s\}\right \rbrace,\]
        and so by \textsc{Monotonicity},
        \[\left\lbrace b^{(j)}_1, ..., b^{(j)}_{k+1}\right\rbrace\underset{Da\left\lbrace b^{(i)}_1, ..., b^{(i)}_{k+1}: i< j, i\neq e\right\rbrace}{\indep} \left\lbrace b^{(e)}_r: r\in [k+1]\setminus \{s\}\right\rbrace.\]
        By the induction hypothesis,
         \[\left\lbrace b^{(e)}_r: r\in [k+1]\setminus\{s\}\right\rbrace\underset{D}{\indep} a\left\lbrace b^{(i)}_1, ..., b^{(i)}_{k+1}: i< j, i\neq e\right \rbrace.\]
         By \textsc{Transitivity} and \textsc{Symmetry}, we obtain
        \[\left\lbrace b^{(e)}_r: r\in [k+1]\setminus\{s\}\right\rbrace\underset{D}{\indep} a \left\lbrace b^{(i)}_1, ..., b^{(i)}_{k+1}: i<j+1, i\neq e\right \rbrace,\]
        establishing condition (ii) in the case $e<j$.
        
        It remains to verify condition (i); suppose it fails. By \textsc{Finite Character}, there is a finite subset $I\subseteq j+1$ such that $(b^{(i)}_1, ..., b^{(i)}_{k+1}: i\in I)$ is not $D$-independent; fix such $I$ with $|I|$ minimal. Since $(b^{(0)}_1, ..., b^{(0)}_{k+1})$ is $D$-independent and $(b_1^{(i)}, ..., b^{(i)}_{k+1})\equiv_D (b^{(0)}_1, ..., b^{(0)}_{k+1})$ for all $i<j+1$, we have by \textsc{Invariance} that $(b^{(i)}_1, ..., b^{(i)}_{k+1})$ is $D$-independent for all $i<j+1$. In particular, $|I|\geq 2$.
        
        We will show that every proper subsequence of $(b^{(i)}_1, ..., b^{(i)}_{k+1}: i\in I)$ is $D$-independent. Fix $e\in I$ and $s\in [k+1]$. By condition (ii), which we have established,
        \[\left\lbrace b^{(e)}_r: r\in [k+1]\setminus\{s\}\right\rbrace\underset{D}{\indep} \left\lbrace b^{(i)}_1, ..., b^{(i)}_{k+1}: i\in I\setminus \{e\}\right \rbrace.\]
        By minimality of $|I|$, $(b^{(i)}_1, ..., b^{(i)}_{k+1}: i\in I\setminus \{e\})$ is $D$-independent. Since $(b^{(e)}_r: r\in [k+1]\setminus\{s\})$ is also $D$-independent, by Lemma \ref{independencebasicprop}(ii), $(b^{(i)}_1, ..., b^{(i)}_{k+1}: i\in I)\setminus \{b^{(e)}_s\}$ is $D$-independent.
        
        Thus, every proper subsequence of $(b^{(i)}_1, ..., b^{(i)}_{k+1}: i\in I)$ is $D$-independent. Since $(k+1)|I|\geq 2(k+1)\geq k+3$ and $T$ is $(k+1)$-trivial, $(b^{(i)}_1, ..., b^{(i)}_{k+1}: i\in I)$ must be $D$-independent, which is a contradiction. Thus, condition (i) is satisfied, concluding the proof.
\end{proof}
\subsection{Arity}
Recall that a theory is \textit{$k$-ary} if it admits quantifier elimination in a $k$-ary relational language, or equivalently, if every formula $\phi(x_1, ..., x_l)$ is equivalent to a Boolean combination of formulas involving at most $k$ of the variables $x_1, ..., x_l$. Palac\'in proved the following proposition, which says that $(k+1)$-ary theories are (totally) $k$-trivial under suitable assumptions.
\patchcmd{\thmhead}{(#3)}{#3}{}{}
\begin{prop}[{\cite[Section 3]{palacin}}]\label{palacin}
    Let $k\in\N^+$, and suppose $T$ is $(k+1)$-ary.
    \begin{enumerate}[(i)]
        \item If $T$ is simple and has $(k+2)$-complete amalgamation over models (see \cite[Definition 2.2]{palacin}), then $T$ is $k$-trivial. If $T$ is also supersimple, then $T$ is trivial.
        \item If $T$ is stable, then $T$ is $k$-totally trivial (and hence $k$-trivial).
    \end{enumerate}
\end{prop}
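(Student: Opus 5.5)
This is Palacín's result, cited from \cite[Section 3]{palacin}. I will sketch how I would reprove it from scratch, working over a small model $M$ by Lemma \ref{lemmamodel}.

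\emph{Part (ii).} Fix tuples $a, b_1, \ldots, b_{k+1}$ with $a\indep_M b_{\neq j}$ for every $j\in[k+1]$. The goal is $a\indep_M b_1\cdots b_{k+1}$.

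By Lemma \ref{typesovermodels}, $p:=\tp(a/M)$ is stationary. Using \textsc{Existence}, I choose $a'\models p$ with $a'\indep_M b_1\cdots b_{k+1}$. It then suffices to show $a\equiv_{Mb_1\cdots b_{k+1}} a'$.

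Take a formula $\phi(x;y_1,\ldots,y_{k+1})$ over $M$. Split each tuple of variables into single variables; the variables of $x$ may be coordinates of $a$ repeated across different conjuncts. Since $T$ is $(k+1)$-ary, $\phi$ is a Boolean combination of formulas in at most $k+1$ single variables, with parameters from $M$. Any such formula mentions at most $k+1$ of the single variables. So it either omits every variable of some $y_j$, or it involves variables from all $k+1$ blocks $y_1,\ldots,y_{k+1}$ and none from $x$.

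In the second case the formula holds of $(a,\bar b)$ exactly when it holds of $(a',\bar b)$, since $x$ does not occur. In the first case the formula lives on $x, b_{\neq j}$. There both $a$ and $a'$ are independent from $b_{\neq j}$ over $M$ and realise the stationary type $p$. Hence $\tp(a/Mb_{\neq j})=\tp(a'/Mb_{\neq j})=p\upharpoonright^{Mb_{\neq j}}$.

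So every atom of the Boolean combination agrees on $(a,\bar b)$ and $(a',\bar b)$, and therefore so does $\phi$. This gives $a\equiv_{M\bar b}a'$, and so $a\indep_M\bar b$. Finally, $k$-total triviality implies $k$-triviality, as the paper already notes.

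\emph{Part (i).} Stationarity is unavailable here, so $(k+2)$-complete amalgamation over models plays its role. Suppose $(a_1,\ldots,a_{k+2})$ is a $(k+2)$-cycle over $M$. Every proper subfamily is $M$-independent, so the types $\tp(a_{\neq j}/M)$ form a compatible system on the boundary of the $(k+1)$-simplex.

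The first step is to apply $(k+2)$-complete amalgamation to obtain an $M$-independent tuple $(a'_1,\ldots,a'_{k+2})$ with $a'_{\neq j}\equiv_M a_{\neq j}$ for every $j$. This needs the amalgamation to be taken over the functor built from $\operatorname{acl}$-closures of the $Ma_I$; I expect checking that this fits Palacín's definition to be the main technical obstacle.

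Next, I apply $(k+1)$-arity to single coordinates, as in part (ii). Any formula in at most $k+1$ single variables omits all coordinates of some $a_j$, so it is determined by the matched type of $a_{\neq j}$. Hence $(a_1,\ldots,a_{k+2})\equiv_M(a'_1,\ldots,a'_{k+2})$, the original tuple is independent, and we have a contradiction. So $T$ is $k$-trivial.

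For the supersimple addendum, I would use that in a supersimple theory $k$-triviality collapses to triviality. This follows from Theorem \ref{collapse} directly, for any simple $T$.
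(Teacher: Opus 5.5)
The paper does not prove this proposition; it is quoted from \cite[Section 3]{palacin}, so there is no internal argument to compare against. Your self-contained reconstruction is correct.

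In (ii), your case split is exhaustive: an atom meeting all $k+1$ blocks $y_1,\dots,y_{k+1}$ has no variable left for a coordinate of $x$. Also, $a'\indep_M b_{\neq j}$ follows from $a'\indep_M b_1\cdots b_{k+1}$ by \textsc{Monotonicity}. So stationarity (Lemma \ref{typesovermodels}) gives $a\equiv_{Mb_{\neq j}}a'$, exactly as you say.

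In (i), the step you flag as the main obstacle is routine under the standard formulation (independent systems of algebraically closed sets over a model). The system $s\mapsto\operatorname{acl}(Ma_s)$ for $s\subsetneq[k+2]$, with inclusion maps, is automatically coherent because it comes from a single tuple. Each face is independent precisely because every proper subsequence of a $(k+2)$-cycle is $M$-independent. Take a solution and normalise it by an automorphism so that $M$ is fixed pointwise; by functoriality it gives $a'_i$ with $a'_{\neq j}\equiv_M a_{\neq j}$. The solution need not respect extra coincidences between the sets $\operatorname{acl}(Ma_{\neq j})$ inside $\monster$, but you never use them. Only the $(k+2)$-amalgamation instance of $(k+2)$-complete amalgamation is needed, and the base must be a model, which Lemma \ref{lemmamodel} supplies.

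Finally, deducing the supersimple clause from Theorem \ref{collapse} is legitimate, since that proof does not rely on this proposition. It also shows that supersimplicity is superfluous, which is exactly how the paper obtains Corollary \ref{corkary}(i).
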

\patchcmd{\thmhead}{#3}{(#3)}{}{}
We can now strengthen this as follows.
\begin{cor}\label{corkary}
    \begin{enumerate}[(i)]
        \item Suppose $T$ is simple and, for some $k\in\N^+$, $T$ is $k$-ary and has $(k+1)$-complete amalgamation over models. Then $T$ is trivial.
        \item If $T$ is stable and $k$-ary for some $k\in\N^+$ (i.e., admits quantifier elimination in a relational language of bounded arity), then $T$ is trivial.
    \end{enumerate}
\end{cor}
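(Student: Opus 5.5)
The corollary should follow by combining Palac\'in's Proposition \ref{palacin} with the collapse Theorem \ref{collapse}. The only bookkeeping issue is the index shift: Proposition \ref{palacin} applies to $(k+1)$-ary theories and yields $k$-triviality, whereas our hypothesis is that $T$ is $k$-ary. So we distinguish the case $k=1$ from the case $k\geq 2$.

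For (i), first suppose $k\geq 2$. Then $T$ is $((k-1)+1)$-ary, and it has $((k-1)+2)$-complete amalgamation over models, since $(k-1)+2=k+1$. Proposition \ref{palacin}(i), applied with $k-1\in\N^+$ in place of $k$, gives that $T$ is $(k-1)$-trivial. Since $T$ is simple, Theorem \ref{collapse} then gives that $T$ is $1$-trivial, i.e., trivial.

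Now suppose $k=1$. A $1$-ary theory is also $2$-ary, because any Boolean combination of formulas in at most one variable is a Boolean combination of formulas in at most two variables. The difficulty is that applying Proposition \ref{palacin}(i) with $k=1$ would require $3$-complete amalgamation, while we are only given $2$-complete amalgamation. I would avoid this by proving triviality directly. In a unary theory, $\tp(a/Db)$ is determined by $\tp(a/D)$, $\tp(b/D)$ and the equalities between coordinates of $a$ and coordinates of $b$ (together with elements of $D$). From this I expect that $a\indep_D b$ holds exactly when $\mathrm{acl}(Da)\cap\mathrm{acl}(Db)=\mathrm{acl}(D)$, and that $\mathrm{acl}(Da)$ consists of $D$, the coordinates of $a$, and the unary-definable finite sets. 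Hence forking is controlled by shared coordinates, which is evidently trivial: if $a\indep_D b$, $a\indep_D c$ and $b\indep_D c$, then $a$ shares no new coordinate with $bc$, so $a\indep_D bc$. Alternatively, one can check that $2$-complete amalgamation over models already implies $3$-complete amalgamation for unary theories: amalgamating three types with pairwise agreement, and with no ternary constraints in the language, reduces to choosing equalities consistently. This reduction lets us apply Proposition \ref{palacin}(i) in all cases.

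For (ii), if $T$ is stable and $k$-ary, it is also $(k+1)$-ary, so Proposition \ref{palacin}(ii) applies with no amalgamation hypothesis and shows that $T$ is $k$-totally trivial, hence $k$-trivial. Stable theories are simple, so Theorem \ref{collapse} gives that $T$ is trivial. This route needs no case split. (For $k\geq 2$ one could instead use $(k-1)$ in place of $k$, but this is unnecessary.)

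The main obstacle is the $k=1$ edge case in (i), which requires either the direct unary triviality argument or the reduction from $2$- to $3$-complete amalgamation. Everything else is a routine index shift followed by Theorem \ref{collapse}.
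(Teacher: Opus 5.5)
Your proposal is correct and is essentially the argument the paper intends, which is left implicit as a direct consequence of Palac\'in's result. For (i) you apply Proposition \ref{palacin}(i) at index $k-1$ to get $(k-1)$-triviality; for (ii) your passage through $(k+1)$-arity and $k$-total triviality works equally well; in both cases you then collapse to triviality via Theorem \ref{collapse}.

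The $k=1$ case of (i), which you single out as the main obstacle, needs none of your sketched unary arguments. Under the paper's convention, equality counts as a binary formula, so no theory with infinite models is $1$-ary (the paper notes that the pure set has arity $2$), and the case is vacuous. Even if you allow equality in a unary language, a $1$-ary theory is $2$-ary. Proposition \ref{palacin}(i) at index $1$ then requires only $3$-complete amalgamation over models, which is the independence theorem and holds in every simple theory.
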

\section{Higher-arity distality}\label{sec3}
As mentioned in the introduction, there is a close connection between notions of forking triviality and notions of higher-arity distality. In this section, we discuss the latter. There are important gaps in the literature on examples of theories satisfying higher-arity distality, and the main goal of this section is to fill them.

Let us first recall Simon's definition of distality in \cite{simondistal}.
\begin{defn}
    Say that $T$ is \textit{distal} if either of the following equivalent conditions hold.
    \begin{enumerate}[(i)]
        \item \textit{(`Internal characterisation')} Let $I_0, I_1, I_2$ be infinite sequences of tuples from $\monster$. Let $a_1, a_2$ be tuples from $\monster$. If $I_0+I_1+a_2+I_2$ and $I_0+a_1+I_1+I_2$ are indiscernible, then $I_0+a_1+I_1+a_2+I_2$ is indiscernible.
        \item \textit{(`External characterisation')} Let $I, J$ be infinite sequences of tuples from $\monster$. Let $a$ be a tuple from $\monster$, and let $B\subseteq \monster$ be small. If $I+a+J$ is indiscernible and $I+J$ is indiscernible over $B$, then $I+a+J$ is indiscernible over $B$.
    \end{enumerate}
\end{defn}
\begin{remark}
\begin{enumerate}[(i)]
    \item By compactness, the sequences in the previous definition may be assumed to be dense and/or without endpoints.
    \item In \cite{simondistal}, Simon defined distality via the internal characterisation, and then showed that the external characterisation was equivalent. That the external implies the internal is easy, but the converse implication is non-trivial. In particular, Simon's proof relied on the assumption that $T$ is NIP (an assumption made throughout the paper). It was only in a much later paper \cite{walker} that a proof, due to Chernikov, appeared of the fact that the internal characterisation already implies NIP, so the assumption was in fact superfluous.
\end{enumerate}
\end{remark}

Walker \cite{walker} generalised the two characterisations of distality to higher arity as follows. Recall that given tuples $b_1, ..., b_k$ from $\monster$ and $I\subseteq [k]$, we write $b_I:=(b_i: i\in I)$; for $i\in [k]$, we also write $b_{\neq i}:=b_{[k]\setminus \{i\}}$.
\begin{defn}\label{defnkdistal}
    Let $k\in\N^+$. Say that $T$ is \textit{$k$-distal} if the following holds.

    Let $I_0, ..., I_{k+1}$ be infinite sequences of tuples from $\monster$. Let $a_0, ..., a_k$ be tuples from $\monster$. If $I_0+a_0+I_1+a_1+\cdots+I_{m-1}+I_m+\cdots+a_k+I_{k+1}$ is indiscernible for all $m\in [k+1]$, then $I_0+a_0+I_1+a_1+\cdots+I_k$ is indiscernible.
\end{defn}
\begin{defn}\label{defnstronglykdistal}
    Let $k\in\N^+$. Say that $T$ is \textit{strongly $k$-distal} if the following holds.

    Let $I, J$ be infinite sequences of tuples from $\monster$. Let $a, b_1, ..., b_k$ be tuples from $\monster$. If $I+a+J$ is indiscernible over $b_{\neq m}$ for all $m\in [k]$ and $I+J$ is indiscernible over $b_1\cdots b_k$, then $I+a+J$ is indiscernible over $b_1\cdots b_k$.
\end{defn}
\begin{remark}\label{rmkrename}
    \begin{enumerate}[(i)]
        \item By compactness, in Definitions \ref{defnkdistal} and \ref{defnstronglykdistal}, the sequences may be assumed to be dense and/or without endpoints.
        \item In Definition \ref{defnstronglykdistal}, the tuples $b_1, ..., b_{k+1}$ may be replaced by small sets.
    \end{enumerate}
\end{remark}

It is easy to check that, for all $k\geq 2$, if $T$ is (strongly) $(k-1)$-distal then it is (strongly) $k$-distal, so we say that $T$ is \textit{strictly (strongly) $k$-distal} if it is (strongly) $k$-distal but not (strongly) $(k-1)$-distal.

It is also easy to check that, for all $k\in\N^+$, if $T$ is strongly $k$-distal then it is $k$-distal. The converse holds for $k=1$, as both notions are equivalent to distality, but was open for $k\geq 2$.
\patchcmd{\thmhead}{(#3)}{#3}{}{}
\begin{question}[{\cite[Question 5.2]{walker}}]\label{questioncoincide}
    Let $k\geq 2$. Do $k$-distality and strong $k$-distality coincide?
\end{question}
\patchcmd{\thmhead}{#3}{(#3)}{}{}
This was resolved negatively by Chernikov and Westhead in \cite{artemfrancis}. Their argument uses the fact that $k$-distality and strong $k$-distality respectively correspond to $(k-1)$-triviality and $(k-1)$-total triviality among stable theories. The latter correspondence was proven by them, and the former by Walker.
\patchcmd{\thmhead}{(#3)}{#3}{}{}
\begin{fact}[{\cite[Theorem 8.16]{walker}}]\label{walkerequiv}
    Let $T$ be a stable theory. For all $k\in\N^+$, $T$ is $k$-trivial if and only if it is $(k+1)$-distal.
\end{fact}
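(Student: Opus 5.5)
The plan is to prove both directions by translating indiscernibility of the relevant sequences into forking independence. In a stable theory this is possible because of a standard fact. If $I$ is an infinite indiscernible sequence, dense and without endpoints (Remark \ref{rmkrename}(i)), then the average type $\mathrm{Av}(I/B)$ over any small $B$ is the unique non-forking extension of $\mathrm{Av}(I/I)$. Moreover, $\mathrm{Av}(I/I)$ is stationary, being definable over $I$ and based on any infinite subsequence.

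The key lemma I would prove first is a dictionary between the two sides. Let $I+J$ be indiscernible, dense and without endpoints, and let $B$ be a small set. Then $I+a+J$ is indiscernible over $B$ if and only if $a\models \mathrm{Av}(I/IJB)$. Equivalently, $a\models\mathrm{Av}(I/I)$, the sequence $I+J$ is indiscernible over $B$, and $a\indep_{IJ} B$.
\begin{itemize}
\item[$\Leftarrow$] Use stationarity and the fact that a realisation of the average type can be inserted into a cut of an indiscernible sequence without breaking indiscernibility.
\item[$\Rightarrow$] Use that indiscernible sequences in stable theories are indiscernible sets. By finite satisfiability in $I$ and $J$, the type of $a$ over $IJB$ is then the average type.
\end{itemize}
I would also use the internal form: for a skeleton of sequences $I_0,\dots,I_{k+2}$, inserting $a_m$ into its cut preserves indiscernibility exactly when $a_m$ realises the average type over the remaining data. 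By Lemma \ref{lemmamodel} we may harmlessly enlarge the base to a model containing the sequences.

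\emph{$k$-trivial $\Rightarrow$ $(k+1)$-distal.} Take $I_0,\dots,I_{k+2}$ and $a_0,\dots,a_{k+1}$ as in Definition \ref{defnkdistal} (with $k+1$ in place of $k$). Let $D$ be the union of the sequences. By the dictionary, the indiscernibility with $a_m$ removed gives that $\{a_i: i\neq m\}$ is a family of realisations of the respective average types that is $D$-independent. Since this holds for every $m$, every $(k+1)$-element subfamily of $(a_0,\dots,a_{k+1})$ is $D$-independent. Hence $k$-triviality (absence of $(k+2)$-cycles) gives that the whole family is $D$-independent. Then each $a_m$ realises the unique non-forking extension of its average type over $D\cup\{a_i:i\neq m\}$. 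Inserting the $a_i$ one at a time with the dictionary yields full indiscernibility.

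\emph{$(k+1)$-distal $\Rightarrow$ $k$-trivial.} Suppose $(a_0,\dots,a_{k+1})$ is a $(k+2)$-cycle over a small model $M$. By Lemma \ref{lemmamodel}, each $\tp(a_i/M)$ is stationary (Lemma \ref{typesovermodels}). To fit everything into a single sequence, I would pad the tuples. Let $p$ be the type over $M$ of a tuple $(x_0,\dots,x_{k+1})$ whose coordinates are $M$-independent realisations of $\tp(a_i/M)$. Build a Morley sequence in $p$ over $M\cup\{a_0,\dots,a_{k+1}\}$, cut into $k+3$ dense pieces $I_0,\dots,I_{k+2}$. At the $m$-th cut insert $a'_m$, namely a realisation of $p$ whose $m$-th coordinate is $a_m$ and whose other coordinates are chosen generically, independent from everything else.

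Then, via the dictionary over $M$:
\begin{itemize}
\item pairwise-missing-one independence of the cycle gives indiscernibility with any one $a'_m$ removed;
\item the failure of full independence gives a formula over $M$ witnessing non-indiscernibility of the complete sequence.
\end{itemize}
This contradicts $(k+1)$-distality.

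The main obstacle I expect is the dictionary lemma, specifically its forward direction. One must show that indiscernibility of $I+a+J$ over $B$ forces $a\indep_{IJ}B$ together with the correct type. This needs the set-indiscernibility of stable sequences and the definability of $\mathrm{Av}(I)$ over $I$. A secondary technical point is checking, in the second direction, that the padding really preserves the "all but one" indiscernibility conditions; I would verify this using Lemma \ref{independencebasicprop}(ii).
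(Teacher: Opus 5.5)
The paper gives no proof of this statement; it is quoted from Walker. Judged on its own, your argument is correct. It is the natural analogue of the paper's proof of Proposition \ref{equiv}:
\begin{itemize}
\item your $(k+1)$-distal $\Rightarrow$ $k$-trivial direction mirrors (ii)$\Rightarrow$(i) there (a cycle over a model, a Morley sequence in a stationary type, and a contradiction with distality);
\item your converse mirrors (i)$\Rightarrow$(iii) (average types plus stationarity).
\end{itemize}
You assert stationarity of $\mathrm{Av}(I/I)$ outright. The paper instead works over an infinite initial segment $I_0$ and gets the required uniqueness from Lemma \ref{indiscstp} and Fact \ref{stpstationary}; that is one way of proving the stationarity you use.

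The genuinely new ingredient relative to Proposition \ref{equiv} is your padding. There only $a$ is inserted and the $b_j$ are parameters, whereas Definition \ref{defnkdistal} forces every member of the cycle into one sequence. Your bookkeeping goes through. Write $\mathcal I:=I_0+\cdots+I_{k+2}$ (in both directions) and $\bar a:=a_0\cdots a_{k+1}$. Take $\mathcal I$ Morley in $p$ over $M\bar a$, and choose the padding family $E$ $M$-independent with $E\indep_M \bar a\mathcal I$. Then Lemma \ref{independencebasicprop}(ii) makes $\mathcal I\cup\{a'_i:i\neq m\}$ an $M$-independent family of realisations of the stationary type $p$, hence an $M$-indiscernible set. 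Conversely, $M$-indiscernibility of the full sequence would give $a'_0\cdots a'_{k+1}\equiv_M c_0\cdots c_{k+1}$ for increasing $c_j\in\mathcal I$. That would make $(a_0,\dots,a_{k+1})$ $M$-independent, a contradiction.

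A few repairs are needed.

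\emph{The dictionary is misstated.} Its right-hand side must include that $I+J$ is $B$-indiscernible. The second form should read $a\models\mathrm{Av}(I/IJ)$ (or $a\indep_I JB$) rather than $a\models\mathrm{Av}(I/I)$ together with $a\indep_{IJ}B$: every $a\in J$ satisfies the latter pair.

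\emph{The forward direction is not the obstacle you expect.} For $\bar d\subseteq IJB$, $a$ agrees on $\phi(x;\bar d)$ with every $c\in I$ beyond the $I$-part of $\bar d$, and completeness of $\mathrm{Av}(I)$ finishes the argument.

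\emph{The real care is needed with several inserted tuples.}
\begin{itemize}
\item To get $a_i\models\mathrm{Av}(\mathcal I/D\cup\{a_j:j\neq i,m\})$, apply the dictionary with $I$ the entire initial segment before $a_i$ of the sequence with $a_m$ deleted; its average type is $\mathrm{Av}(\mathcal I)$.
\item To start the one-at-a-time reinsertion, you need $\mathcal I$ indiscernible over $a_1\cdots a_{k+1}$. This is where set-indiscernibility, applied to the $a_0$-deleted sequence, is used.
\end{itemize}

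\emph{Parameters.} Definition \ref{defnkdistal} is parameter-free. In the second direction, append to every tuple the finite part of $M$ occurring in the formula that witnesses the failure of indiscernibility.
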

\patchcmd{\thmhead}{#3}{(#3)}{}{}
\patchcmd{\thmhead}{(#3)}{#3}{}{}
\begin{fact}[{\cite[Proposition 4.16]{artemfrancis}}]\label{artemfrancis}
    Let $T$ be a stable theory. For all $k\in\N^+$, $T$ is $k$-totally trivial if and only if it is strongly $(k+1)$-distal.
\end{fact}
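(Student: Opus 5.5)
We prove the two directions separately. In both we use that for stable $T$ types over small models are stationary (Lemma \ref{typesovermodels}), with $M$-invariant global non-forking extensions (Lemma \ref{stationaryinvariant}). This stationarity is what lets us pass between non-forking independence and Morley sequences, i.e.\ indiscernible sequences. By Lemma \ref{lemmamodel} we may take the base of any failure of $k$-total triviality to be a small model $M$. By Remark \ref{rmkrename}(ii), the parameters in strong $(k+1)$-distality may be small sets, so we may absorb $M$ into the $b_m$'s.

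\emph{Strong $(k+1)$-distality implies $k$-total triviality.} Suppose $a, b_1, \dots, b_{k+1}$ and a model $M$ witness a failure of $k$-total triviality: $a\indep_M b_{\neq j}$ for all $j\in[k+1]$, but $a\not\indep_M b_1\cdots b_{k+1}$. Let $p=\tp(a/M)$, with global non-forking extension $\globalext{p}$.
\begin{enumerate}[(1)]
\item Choose $J$ to be a Morley sequence in $\globalext{p}$ over $Mab_1\cdots b_{k+1}$, and then $I$ a Morley sequence in $\globalext{p}$ over $MaJb_1\cdots b_{k+1}$. Write $I+a+J$ with $I$ ordered to come first, as usual for Morley sequences of invariant types.
\item For each $m$, $a\indep_M b_{\neq m}$ and stationarity give that $a$ realises $\globalext{p}\upharpoonright Mb_{\neq m}$. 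Hence $I+a+J$ is a Morley sequence of $\globalext{p}$ over $Mb_{\neq m}$, so it is indiscernible over $Mb_{\neq m}$. By construction, $I+J$ is indiscernible over $Mb_1\cdots b_{k+1}$.
\item Elements of $I$ realise $\globalext{p}\upharpoonright Mb_1\cdots b_{k+1}$. Since $a\not\indep_M b_1\cdots b_{k+1}$, the element $a$ does not. So $I+a+J$ is not indiscernible over $Mb_1\cdots b_{k+1}$.
\end{enumerate}
Replacing $b_1$ by $Mb_1$, this violates strong $(k+1)$-distality.

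\emph{$k$-total triviality implies strong $(k+1)$-distality.} Suppose $I+a+J$ is indiscernible over $b_{\neq m}$ for each $m\in[k+1]$ and $I+J$ is indiscernible over $b:=b_1\cdots b_{k+1}$. By compactness we take $I$ dense, without endpoints and very long, and $J$ likewise. We aim to show $I+a+J$ is indiscernible over $b$. The idea is to use the average type of $I$ as the notion of genericity. In a stable theory $\mathrm{Av}(I/\monster)$ is definable over $I$ and does not fork over $I$. Indiscernibility over $b_{\neq m}$ means $a$ realises $\mathrm{Av}(I/Ib_{\neq m})$, and so $a\indep_{I} b_{\neq m}$. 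We must work over a base where these average types are stationary, so we take $D=\mathrm{acl}^{eq}(I)$, or a model $M\supseteq I$ chosen so that $I$ remains Morley over $M$ in the relevant types. Applying $k$-total triviality then gives $a\indep_D b$. Stationarity now forces $\tp(a/Db)=\mathrm{Av}(I/Db)$, which is exactly $\tp(c/Db)$ for $c\in I$.

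\emph{Main obstacle.} The hard step is concluding indiscernibility of the whole of $I+a+J$ over $b$, not merely that $a$ has the right type over $Ib$: we must also control the interaction with $J$. I would handle $J$ symmetrically, using the average type of $J$ taken in reversed order, which is also available in a stable theory since indiscernible sequences there are indiscernible sets. Concretely, I would show that every finite configuration $\bar c\, a\, \bar d$ with $\bar c\subseteq I$ and $\bar d\subseteq J$ has the same type over $b$ as a configuration lying entirely inside $I+J$. I would argue by induction on the length of $\bar d$, each time applying total triviality with base enlarged by the already-placed elements. Corollary \ref{baseextension} is the tool for transferring the independences to the enlarged base. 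Ensuring the base-enlargement preserves the hypotheses $a\indep b_{\neq m}$ is where I expect the real care is needed. If it fails, the fallback is to replace $I$ by a Morley sequence over a model $M$ realising the same Ehrenfeucht--Mostowski type, and run the argument over $M$.
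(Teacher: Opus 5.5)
Your first direction (strong $(k+1)$-distality $\Rightarrow$ $k$-total triviality) is correct, and it is essentially the paper's proof of (ii)$\Rightarrow$(i) in Proposition \ref{equiv}. One small point: you realise $a$, then $J$, then $I$, so the Morley order is $a+J+I$ (or its reverse). Placing $a$ between $I$ and $J$ therefore relies on total indiscernibility of Morley sequences in a stable theory, not on a convention.

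The converse direction has a genuine gap. First, the claim ``$\tp(a/Db)=\mathrm{Av}(I/Db)$, which is exactly $\tp(c/Db)$ for $c\in I$'' is false as written, because $I\subseteq D$ makes $\tp(c/Db)$ realised. What $a\indep_I b$ plus stationarity actually gives is that, for finite $\bar c\subseteq I$, $\tp(a/\bar c b)=\tp(e/\bar c b)$ for $e\in I$ above $\bar c$. That only shows $I+a$ is $b$-indiscernible. The mixed tuples $\bar c\,a\,\bar d$ with $\bar d\subseteq J$ are exactly your ``main obstacle''. Your induction on $|\bar d|$ is only sketched, and you yourself doubt that the independence hypotheses survive the base change, so the argument is not finished.

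The missing idea is to put $J$ into the base, after which no induction is needed:
\begin{enumerate}[(1)]
\item Since $T$ is stable, $I+a+J$ is an indiscernible \emph{set} over each $b_{\neq m}$. So $a$ realises the restriction of $\mathrm{Av}(I+J)$ to $IJb_{\neq m}$.
\item This global type is definable over $IJ$, so $a\indep_{IJ} b_{\neq m}$. Moreover $\tp(a/IJ)$ is stationary: its global non-forking extensions are $\mathrm{Aut}(\monster/IJ)$-conjugate, and $\mathrm{Av}(I+J)$ is $IJ$-invariant.
\item $k$-total triviality now gives $a\indep_{IJ} b$, so $\tp(a/IJb)$ is the restriction of $\mathrm{Av}(I+J)$.
\item Since $I+J$ is $b$-indiscernible, $\bar c\,a\,\bar d\equiv_b \bar c\,e\,\bar d$ for any $e\in I$ above $\bar c$.
\end{enumerate}

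The paper's route is different again, and cleaner. It proves condition (iii) of Proposition \ref{equiv} for the single sequence $K=I+a+J$. Suppose $K$ is not $b$-indiscernible. After grouping into blocks, one gets $c,c'$ in a final segment $K_1$ of $K=K_0+K_1$ with $c\not\equiv_b c'$. Both realise $\mathrm{Av}(K)$ over $K_0b_{\neq m}$, so both are independent from each $b_{\neq m}$ over $K_0$, hence from $b$ by $k$-total triviality. They have the same strong type over $K_0$ by Lemma \ref{indiscstp}, so Fact \ref{stpstationary} gives $c\equiv_{K_0b}c'$, a contradiction. This argument never separates $I$ from $J$, and it does not even use that $I+J$ is $b$-indiscernible.
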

Before the release of \cite{artemfrancis}, we independently discovered a proof of Fact \ref{artemfrancis}, which we include in Section \ref{subsecalternativeproof}.

Using Facts \ref{walkerequiv} and \ref{artemfrancis}, Chernikov and Westhead use Goode's examples from Section \ref{subsectriviality} to give a negative answer to Question \ref{questioncoincide}.
\begin{cor}[{\cite[Corollary 4.23]{artemfrancis}}]
    For all $n\in\N^+$, the theory $T_n$ is superstable, strictly 2-distal, strictly strongly $(2^{n-1}+1)$-distal. The theory $T_\omega$ is superstable, strictly 2-distal, and not strongly $k$-distal for any $k\in\N^+$.
\end{cor}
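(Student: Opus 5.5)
The corollary follows by translating Goode's classification, Theorem \ref{goodeclassification}, through the two equivalences Fact \ref{walkerequiv} and Fact \ref{artemfrancis}. Both equivalences apply because every $T_n$ and $T_\omega$ is superstable, hence stable.

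\textbf{2-distality.} By Theorem \ref{goodeclassification}, $T_n$ and $T_\omega$ are (1-)trivial. Fact \ref{walkerequiv} with $k=1$ then gives that they are 2-distal.

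\textbf{Not distal.} Here we need that $T_n$ (for $n\geq 2$) and $T_\omega$ are not 1-distal. Two routes are available.
\begin{enumerate}[(i)]
\item Since 1-distality is distality, and distal theories have no infinite stable part, no superstable theory with infinite models is distal. This is the standard fact that a stable distal theory has only finite models, or at least that indiscernible sequences in a stable distal theory are constant. This settles $T_n$ for every $n\geq 1$, and also $T_\omega$.
\item Alternatively, one could try to see directly that strong 1-distality fails. Under a hypothetical extension of Fact \ref{artemfrancis} to $k=0$, this would correspond to $0$-total triviality, i.e.\ $a\indep_D b$ for all $a,b$, which fails in any theory with a non-algebraic type.
\end{enumerate}
I would use route (i), citing that stable distal theories are trivial in the sense of having only finite models.

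\textbf{Strict strong $(2^{n-1}+1)$-distality.} Theorem \ref{goodeclassification} says $T_n$ is strictly $2^{n-1}$-totally trivial. Fact \ref{artemfrancis} with $k=2^{n-1}$ gives that $T_n$ is strongly $(2^{n-1}+1)$-distal. Since $T_n$ is not $(2^{n-1}-1)$-totally trivial, Fact \ref{artemfrancis} with $k=2^{n-1}-1$ gives that it is not strongly $2^{n-1}$-distal.

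The case $n=1$ is a degenerate edge case. Here $2^{n-1}-1=0$, so this argument does not apply, and we need that $T_1$ is not strongly 1-distal. Since strong 1-distality is distality, this is again the non-distality of infinite stable theories. This also covers $T_1$ being strictly 2-distal.

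\textbf{$T_\omega$.} Theorem \ref{goodeclassification} says $T_\omega$ is not $k$-totally trivial for any $k\in\N^+$. Fact \ref{artemfrancis} then gives that it is not strongly $(k+1)$-distal for any $k\geq 1$. Non-distality, from route (i), rules out strong 1-distality as well.

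\textbf{Main obstacle.} The only non-bookkeeping point is the non-distality needed for strictness at the bottom level. Everything else is a direct substitution into the cited equivalences.
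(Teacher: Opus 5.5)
Your proposal is correct and is essentially the paper's argument: Goode's classification (Theorem \ref{goodeclassification}) is transferred through Fact \ref{walkerequiv} and Fact \ref{artemfrancis}, with superstability guaranteeing that both equivalences apply. Your route (i) uses the standard fact that a stable theory with infinite models is not distal, so it is neither $1$-distal nor strongly $1$-distal. This correctly supplies the bottom-level strictness for $2$-distality, for the case $n=1$, and for strong $1$-distality of $T_\omega$, which the paper leaves implicit; it remarks elsewhere only that stable theories such as the pure set cannot be distal.
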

Note that the theory $T_\omega$ is $k$-distal for all $k\geq 2$, thus separating $k$-distality from strong $k$-distality for all $k\geq 2$.

Other than Goode's theories, the only examples in the literature of non-distal (strongly) $k$-distal theories appear in Walker's work \cite{walker, walkerthesis}. Curiously, he is unable to find NIP theories that are strictly $k$-distal for some $k\geq 3$.
\begin{question}[{\cite[Question 5.2]{walker}}]\label{questionNIP}
    Let $k\geq 3$. Is there an NIP theory that is strictly $k$-distal?
\end{question}
\patchcmd{\thmhead}{#3}{(#3)}{}{}
Note that Goode's examples are NIP and strictly \textit{strongly} $k$-distal for some $k\geq 3$, but they are (strictly) 2-distal. Using Theorem \ref{collapse} and Fact \ref{walkerequiv}, we partially answer Question \ref{questionNIP}.
\begin{cor}
    Suppose $T$ is stable. For all $k\geq 2$, $T$ is 2-distal if and only if it is $k$-distal.

    In particular, if there is an NIP theory that is strictly $k$-distal for some $k\geq 3$ (thus resolving Question \ref{questionNIP} positively), it must be unstable.
\end{cor}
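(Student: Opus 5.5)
The plan is to derive this directly from Fact \ref{walkerequiv} and Theorem \ref{collapse}, with no new model theory. Fix a stable theory $T$ and $k\geq 2$.

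First, I translate $k$-distality into a triviality statement. By Fact \ref{walkerequiv} applied with $k-1\in\N^+$, $T$ is $k$-distal if and only if $T$ is $(k-1)$-trivial. Applying the same fact with $1$ in place of $k-1$, $T$ is $2$-distal if and only if $T$ is $1$-trivial. Since stable theories are simple, Theorem \ref{collapse} applies to $T$ and tells us that $(k-1)$-triviality is equivalent to $1$-triviality. Chaining these equivalences gives
\[
2\text{-distal}\iff 1\text{-trivial}\iff (k-1)\text{-trivial}\iff k\text{-distal},
\]
which proves the first assertion. The case $k=2$ is trivial in any case.

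For the second assertion, suppose $T$ is NIP and strictly $k$-distal for some $k\geq 3$. Then $T$ is $k$-distal but not $(k-1)$-distal. Since $(k-1)$-distality implies $k$-distality and $2$-distality implies $(k-1)$-distality for $k-1\geq 2$, the theory $T$ is in particular not $2$-distal. If $T$ were stable, the first part would force $T$ to be $2$-distal, which is a contradiction. Hence $T$ must be unstable.

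There is no real obstacle here. The only care needed is with the index shift in Fact \ref{walkerequiv}, namely that $(k-1)$-triviality corresponds to $k$-distality, and with checking that the hypothesis $k\geq 2$ keeps $k-1\in\N^+$, so that both the fact and Theorem \ref{collapse} are applicable.
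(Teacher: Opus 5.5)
Your proposal is correct and matches the paper's argument: you chain Fact \ref{walkerequiv} (with the index shift, so that $(k-1)$-triviality corresponds to $k$-distality) with Theorem \ref{collapse}, using that stable theories are simple. Your derivation of the second assertion from the monotonicity of the $k$-distality hierarchy is also sound.
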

There is another important gap in the literature on (strongly) $k$-distal examples.
\subsection{Arity}\label{subsecarity}
Recall that a theory is \textit{$k$-ary} if it admits quantifier elimination in a $k$-ary relational language. All examples in \cite{walker} of non-distal (strongly) $k$-distal theories are $k$-ary. It is easy to see that every $k$-ary theory is strongly $k$-distal; this was already shown in \cite{walker}, but we include a proof for completeness.
\patchcmd{\thmhead}{(#3)}{#3}{}{}
\begin{prop}[{\cite[Corollary 4.14]{walker}}]\label{k-aryisstronglyk-distal}
    Let $k\in \N^+$. If $T$ is $k$-ary, then $T$ is strongly $k$-distal (and hence $k$-distal).
\end{prop}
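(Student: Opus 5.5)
We need to prove that a $k$-ary theory is strongly $k$-distal. The plan is to use quantifier elimination to reduce indiscernibility of $I+a+J$ over $b_1\cdots b_k$ to statements about formulas mentioning at most $k$ of the relevant points, and then check each such statement using the hypotheses.

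First, the setup. Let $I,J$ be infinite sequences, which by Remark \ref{rmkrename}(i) we take to be dense without endpoints. Let $a,b_1,\dots,b_k$ be such that $I+a+J$ is indiscernible over $b_{\neq m}$ for each $m\in[k]$, and $I+J$ is indiscernible over $B:=b_1\cdots b_k$.

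Write $K:=I+a+J$. We must show that any two increasing tuples $\bar c,\bar c'$ from $K$ of the same length $n$ satisfy $\bar c\equiv_B\bar c'$. By $k$-arity, $\tp(\bar c/B)$ is determined by the truth values of atomic formulas $R(z_1,\dots,z_l)$ with $l\leq k$, each $z_j$ either a coordinate of some $c_i$ or a coordinate of some $b_m$. Since $R$ involves at most $k$ variables, it mentions at most $k$ ``blocks'' among $c_1,\dots,c_n,b_1,\dots,b_k$. So it suffices to show the following: for any $p\le k$ indices $i_1<\dots<i_p$ and any subset $S\subseteq[k]$ with $p+|S|\le k$, the tuples $(c_{i_1},\dots,c_{i_p})$ and $(c'_{i_1},\dots,c'_{i_p})$ have the same quantifier-free type over $b_S$ in this relational language. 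Consistency of the whole type then follows by QE, since types are determined by these atomic data.

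Next comes the case analysis on $p$ and $S$.

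If $p\ge1$, then $|S|\le k-1$, so $S\subseteq[k]\setminus\{m\}$ for some $m$. Since $K$ is indiscernible over $b_{\neq m}$, the two subtuples agree in type over $b_S$.

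If $p=0$, the formula involves no element of $K$, so its truth value is the same for both tuples. (If $S=[k]$ then $p=0$ necessarily.)

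Thus every atomic formula in at most $k$ variables takes the same value on $\bar c B$ and $\bar c' B$, which gives $\bar c\equiv_B \bar c'$. Notably, the hypothesis that $I+J$ is indiscernible over $B$ is not even needed here. This is plausible, since $k$-ary relations cannot see $k$ of the $b$'s together with an element of the sequence.

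The only delicate point is the bookkeeping: a $k$-ary relation is applied to single coordinates, whereas the $b_m$ and $c_i$ are tuples. One should count the blocks actually hit by the relation's variables rather than the lengths of the tuples. With that convention the pigeonhole argument above is immediate, and I expect no real obstacle. Strong $k$-distality implies $k$-distality, as noted earlier in the paper, which gives the parenthetical claim.
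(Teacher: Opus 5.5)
Your proof is correct and is essentially the paper's argument: by $k$-arity, every piece of the relevant type that mentions an element of the sequence mentions at most $k-1$ of the blocks $b_1,\dots,b_k$, so it is controlled by indiscernibility over some $b_{\neq m}$. The only difference is bookkeeping. The paper fixes $\phi\in L(I_0J_0)$ and compares $a$ with some $a'\in I$ lying after $I_0$, which implicitly needs $I+J$ to be indiscernible over $b_1\cdots b_k$ to finish; you compare arbitrary increasing tuples of $I+a+J$ directly, and so correctly observe that this hypothesis is superfluous for $k$-ary theories.
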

\patchcmd{\thmhead}{#3}{(#3)}{}{}
\begin{proof}
    Let $I, J$ and $a, b_1, ..., b_k$ be as in the hypothesis of Definition \ref{defnstronglykdistal}. Let $\phi(x, y_1, ..., y_k)\in L(I_0J_0)$ for some finite $I_0\subseteq I$, $J_0\subseteq J$. Fixing any $a'\in I$ occurring after every element of $I_0$, we show that $\phi(a, b_1, ..., b_k)$ is equivalent to $\phi(a', b_1, ..., b_k)$.
    
    Since $T$ is $k$-ary, $\phi(x, y_1, ..., y_k)$ is equivalent in $T$ to a Boolean combination of some \linebreak$\psi_1(x, y_1, ..., y_k), ..., \psi_l(x, y_1, ..., y_k)\in L(I_0J_0)$, where each $\psi_j$ either has no $x$-dependence or no $y_i$-dependence for some $i\in [k]$. Write $\tau(\psi_1, ..., \psi_l)$ for this Boolean combination. We have that $\phi(a, b_1, ..., b_k)$ is equivalent to $\tau(\psi_1(a, b_1, ..., b_k), ..., \psi_l(a, b_1, ..., b_k))$, which is in turn equivalent to $\tau(\psi_1(a', b_1, ..., b_k), ..., \psi_l(a', b_1, ..., b_k))$ by indiscernibility. But this is equivalent to $\phi(a', b_1, ..., b_k)$ as required.
\end{proof}
As mentioned in the introduction, to gain a deeper understanding of (strong) $k$-distality, it is essential to find non-distal (strongly) $k$-distal theories that are not $k$-ary. We give four classes of examples; we are grateful to Paolo Marimon for suggesting the second and third ones to us. It will be convenient to talk about the precise \textit{arity} of a theory.
\begin{defn}
    The \textit{arity} of the theory $T$, written $\ar(T)$, is the least $k\in\omega$ such that $T$ is $k$-ary, if such $k$ exists, and $\omega$ otherwise.
\end{defn}
So, for $k\in\N^+$, $T$ is $k$-ary if and only if $\ar(T)\leq k$. We seek (strongly) $k$-distal theories $T$ with $\ar(T)> k$.
\subsubsection{Goode's labelled free pseudoplanes}
Our first example has already been introduced: they are Goode's labelled free pseudoplanes $T_n$ from Section \ref{subsectriviality}. Recall that $T_n$ is superstable, (1-)trivial, and strictly $2^{n-1}$-totally trivial, and hence strictly 2-distal and strictly strongly $(2^{n-1}+1)$-distal by Facts \ref{walkerequiv} and \ref{artemfrancis}. Let $(X_1, X_2, ...)\models T_\omega$ be $\kappa$-saturated and strongly $\kappa$-homogeneous for some sufficiently large $\kappa$.
\begin{prop}\label{Tnnotkary}
    For all $n\in \N^+$, there is a formula $\phi_n(\bar{x})=\phi_n(x_1, ..., x_{2^n})\in L_n$ and a sequence $\bar{a}=(a_1, ..., a_{2^n})$ of elements of $X_n$, such that $\bar{a}\models \phi_n(\bar{x})$ and
    \[\bigcup_{i=1}^{2^n}\tp_{L_n}(\bar{a}_{\neq i}/\emptyset)\not\vdash \phi_n(\bar{x}).\]
    In particular, $\phi_n$ is not a Boolean combination of (parameter-free) $L_n$-formulas in less than $2^n$ variables, and so $\ar(T_n)>2^n-1$.
\end{prop}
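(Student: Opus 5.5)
We argue by induction on $n$, defining $\phi_n$ recursively so that the failure of $(2^n-1)$-variable types to determine $\phi_n$ is inherited from level $n-1$. For $n=1$ let $\phi_1(x_1,x_2)$ be $x_1=x_2$, with $\bar a=(a,a)$ for any $a\in X_1$. Since $L_1=\emptyset$, each $\tp_{L_1}(\bar a_{\neq i}/\emptyset)$ is just the type of a single element. These types are also realised by $(a,a')$ with $a'\neq a$, so the union does not imply $\phi_1$.

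For $n\geq 2$ we set
\[\phi_n(x_1,\dots,x_{2^n}) := \exists y_1,\dots,y_{2^{n-1}}\in X_{n-1}\Big(\phi_{n-1}(y_1,\dots,y_{2^{n-1}})\wedge \textstyle\bigwedge_{i} y_i\ast x_{2i-1}=x_{2i}\Big).\]
By induction there are tuples $\bar b,\bar b'$ in $X_{n-1}$ with $\bar b\models\phi_{n-1}$ and $\bar b'\models\neg\phi_{n-1}$. We may also assume $\bar b_{\neq i}\equiv_{L_{n-1}}\bar b'_{\neq i}$ for every $i$; this is exactly what "$\bigcup_i\tp(\bar b_{\neq i})\not\vdash\phi_{n-1}$" gives, after realising the union of types together with $\neg\phi_{n-1}$. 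By saturation, choose $c_1,\dots,c_{2^{n-1}}\in X_n$ in pairwise distinct $F(X_{n-1})$-orbits. Put $a_{2i-1}=a'_{2i-1}=c_i$, $a_{2i}=b_i\ast c_i$ and $a'_{2i}=b'_i\ast c_i$. Then $\bar a\models\phi_n$, witnessed by $\bar b$.

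To see that $\bar a'\not\models\phi_n$, note that any witness must satisfy $y_i\ast c_i=b'_i\ast c_i$. Freeness of the action then forces $y_i^{-1}b'_i$ to be the trivial word, so $y_i=b'_i$. Hence the witnesses are exactly $\bar b'$, and these fail $\phi_{n-1}$.

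It remains to show $\bar a_{\neq j}\equiv_{L_n}\bar a'_{\neq j}$ for each $j$. Say $j\in\{2i-1,2i\}$. Then the $i$-th pair is broken, and the remaining coordinates only involve $\bar b_{\neq i}$ (resp.\ $\bar b'_{\neq i}$) and the $c_k$. By the induction hypothesis and strong homogeneity of the $L_{n-1}$-reduct, there is an automorphism $\sigma$ of $(X_1,\dots,X_{n-1})$ sending $\bar b_{\neq i}$ to $\bar b'_{\neq i}$. We extend $\sigma$ to $X_n$ orbit by orbit. On the orbit of each $c_k$ with $k\neq i$, set $\sigma(w\ast c_k)=\sigma(w)\ast c_k$ for $w\in F(X_{n-1})$. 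On the orbit of $c_i$, fix the single surviving element $e\in\{c_i,b_i\ast c_i\}$ and set $\sigma(w\ast e)=\sigma(w)\ast e'$, where $e'$ is the corresponding element of $\bar a'$; note $e'=e$ if $j=2i$, and $e'=b'_i\ast c_i$ if $j=2i-1$. On all other orbits, use any bijection of the remaining orbits with themselves (all have the same size in the saturated model) and extend equivariantly. Freeness makes each orbit a regular $F(X_{n-1})$-set, so these maps are well defined, bijective, and respect every $g_m$. Thus $\sigma$ is an $L_n$-automorphism sending $\bar a_{\neq j}$ to $\bar a'_{\neq j}$. Therefore $\bar a'$ realises $\bigcup_j\tp_{L_n}(\bar a_{\neq j}/\emptyset)$ but not $\phi_n$, which proves the displayed non-implication. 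The arity bound follows: were $\phi_n$ equivalent to a Boolean combination of formulas in fewer than $2^n$ variables, each such formula would be decided by some $\tp(\bar a_{\neq j})$, so the union of these types would imply $\phi_n$.

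The main obstacle is this last step: lifting automorphisms of the lower sorts to $X_n$ while controlling where the chosen points go. The induction statement must therefore be phrased for $L_{n-1}$-types over $\emptyset$ in the full sorted structure, so that $\sigma$ exists by homogeneity. The orbit-by-orbit equivariant extension is the delicate check, in particular its compatibility with the relabelling of the edge labels by $\sigma$.
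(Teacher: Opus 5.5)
Your proof is correct and follows the paper's argument essentially step for step: the same recursive formula $\phi_n$ (with pairs indexed $(2i-1,2i)$ instead of the paper's $(i,i+2^{n})$), base points in pairwise distinct orbits, freeness to force the witness to be $\bar b'$, and extension of an automorphism of the lower sorts to $X_n$. Your explicit orbit-by-orbit equivariant extension just spells out the step the paper leaves as an easy exercise.
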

\begin{proof}
    Induct on $n\in \N^+$. For $n=1$, let $\phi_1(x_1, x_2):= (x_1=x_2)$; it is clear that, for all $a\in X_1$, $(a,a)\models \phi_1(x_1, x_2)$ satisfies the statement.

    Suppose, for some $n\in \N^+$, that such $\phi_n(\bar{x})=\phi_n(x_1, ..., x_{2^n})$ and $\bar{a}=(a_1, ..., a_{2^n})$ are given. Let $\phi_{n+1}(y_1, ..., y_{2^{n+1}})$ be the formula
    \[\exists x_1, ..., x_{2^n}\left(\phi_n(x_1, ..., x_{2^n})\wedge\bigwedge_{i=1}^{2^n}y_{i+2^n}=x_i\ast y_i\right).\]
    Fix any $b_1, ..., b_{2^n}\in X_{n+1}$ in pairwise distinct orbits. For $i\in [2^n]$, let $b_{i+2^n}:=a_i\ast b_i$. Writing $\bar{b}:=(b_1, ..., b_{2^{n+1}})$, we have that $\bar{b}\models \phi_{n+1}(\bar{y})$. Since
    \[\bigcup_{i=1}^{2^n}\tp_{L_n}(\bar{a}_{\neq i}/\emptyset)\not\vdash \phi_n(\bar{x}),\]
    we may fix $\bar{a'}=(a'_1, ..., a'_{2^n})$ such that $\bar{a'}$ satisfies the left hand side above but not $\phi_n(\bar{x})$. Now, for $i\in [2^n]$, let $b'_i:=b_i$ and $b'_{i+2^n}:=a'_i\ast b'_i$; write $\bar{b'}:=(b_1, ..., b_{2^{n+1}})$. Then $\bar{b'}\not\models \phi_{n+1}(\bar{y})$: since the $F(X_n)$-action is free, the unique tuple $\bar{x}=(x_1, ..., x_{2n})$ such that $y_{i+2^n}=x_i\ast y_i$ for all $i\in [2^n]$ is $\bar{a'}$, and $\bar{a'}\not\models \phi_n(\bar{x})$. We claim that
    \[b'\models \bigcup_{i=1}^{2^{n+1}}\tp_{L_{n+1}}(\bar{b}_{\neq i}/\emptyset).\]
    
    Indeed, fix $j\in [2^{n+1}]$; we claim that $\tp_{L_{n+1}}(\bar{b}_{\neq j}/\emptyset)=\tp_{L_{n+1}}(\bar{b'}_{\neq j}/\emptyset)$. Let $i\in [2^n]$ be such that $i\equiv j\mod 2^n$. Since $\bar{a'}_{\neq i}\models \tp_{L_n}(\bar{a}_{\neq i}/\emptyset)$, there is an $L_n(\emptyset)$-automorphism $\tau: X_1\cup\cdots\cup X_n\to X_1\cup\cdots\cup X_n$ sending $\bar{a}_{\neq i}$ to $\bar{a'}_{\neq i}$. Now, in $\bar{b}_{\neq j}$, the entries $b_r$ and $b_{r+2^n}$ belong to the same orbit for all $r\in [2^n]\setminus \{i\}$, and every other pair of entries belongs to different orbits; the same is true for $\bar{b'}_{\neq j}$. For all $r\in [2^n]\setminus \{i\}$, $a_r\ast b_r=b_{r+2^n}$ and $\tau(a_r)\ast b'_r=a'_r\ast b'_r=b'_{r+2^n}$; it is an easy exercise to extend $\tau$ to an $L_{n+1}(\emptyset)$-automorphism $\sigma: X_1\cup\cdots\cup X_{n+1}\to X_1\cup\cdots\cup X_{n+1}$ sending $\bar{b}_{\neq j}$ to $\bar{b'}_{\neq j}$, showing that $\tp_{L_{n+1}}(\bar{b}_{\neq j}/\emptyset)=\tp_{L_{n+1}}(\bar{b'}_{\neq j}/\emptyset)$ as required.
    \end{proof}
For $n\geq 2$, we have that $T_n$ is strictly 2-distal and strictly strongly $(2^{n-1}+1)$-distal, with $\ar(T_n)>2^n-1\geq 2^{n-1}+1>2$.
\subsubsection{Universal homogeneous kay-graph}\label{subsubseckay}
Fix $k\in\N^+\setminus\{1\}$. Let $\text{RG}_k$ be the theory of the random $k$-uniform hypergraph (see \cite[Section 5.1]{walker} for a precise description of this theory), in the language consisting of a unique $k$-ary relation symbol. Let $(V,E)$ be any sufficiently saturated model of $\text{RG}_k$. Define a reduct $(V,R)$ of this model as follows, where $R$ is a $(k+1)$-ary relation symbol. For $v_1, ..., v_{k+1}\in V$, let $V\models R(v_1, ..., v_{k+1})$ if and only if
\[\abs{\left\lbrace i\in [k+1]: V\models E(v_{\neq i})\right\rbrace}\equiv 1\mod 2,\]
where $E(v_{\neq i})$ is shorthand for $E(v_1, ..., v_{i-1}, v_{i+1}, ..., v_{k+1})$. Equivalently,
\[\mathbbm{1}_R(v_1, ..., v_{k+1})\equiv \sum_{i=1}^{k+1}\mathbbm{1}_E(v_{\neq i})\mod 2.\]

Let $T_k:=\Th((V,R))$; this is known as the theory of the \textit{universal homogeneous kay-graph}. By \cite[Proposition 3.3]{kaygraphs}, $T_k$ admits quantifier elimination, so $\ar(T_k)=k+1$.
\begin{prop}
    The theory $T_k$ is strictly (strongly) $k$-distal.
\end{prop}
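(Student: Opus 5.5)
Two things have to be shown: that $T_k$ is strongly $k$-distal (hence $k$-distal), and that it is not $(k-1)$-distal (hence not strongly $(k-1)$-distal). Proposition \ref{k-aryisstronglyk-distal} does not apply directly, since $\ar(T_k)=k+1$. Instead, I would use the fact that $(V,R)$ is a reduct of the $k$-ary structure $(V,E)$ and exploit the explicit parity formula defining $R$.

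\emph{Strong $k$-distality.} Take $I,J,a,b_1,\dots,b_k$ as in Definition \ref{defnstronglykdistal}, working in $(V,R)$; by Remark \ref{rmkrename} I may assume $I,J$ are dense without endpoints and the $b_i$ are finite tuples. By quantifier elimination for $T_k$, it suffices to show that every atomic $R$-formula with parameters from $b_1\cdots b_k$ and finitely many elements of $I\cup J$ takes the same truth value at $a$ as at an element $a'\in I$ chosen late enough. An atomic instance in which $a$ (or its coordinate) is absent, or in which some $b_m$ is absent, is already handled by the hypotheses: indiscernibility of $I+a+J$ over $b_{\neq m}$, together with indiscernibility of $I+J$ over $b_1\cdots b_k$. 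The remaining case is an instance $R(x,c,d_1,\dots,d_j)$ with $x$ a coordinate of $a$, $c$ a parameter from $I\cup J$, and the $d$'s coming from all $k$ of the $b_m$. Since there are only $k+1$ slots, this case forces $j=k$, so each $b_m$ contributes exactly one coordinate and at most one element of $I\cup J$ appears. In that case I would use the trick of varying the element of the sequence: compare $R(a,c,\bar d)$ with $R(a',c,\bar d)$ by inserting auxiliary elements of $I+J$ and using the parity identity. For instance, $R(a,c,\bar d)\oplus R(a,c',\bar d)$ is determined by $R$-instances in which $a$ occurs together with $c,c'$, so at most $k-1$ of the $d$'s appear. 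I have not checked this identity in the reduct language carefully.

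\emph{Main obstacle.} Making this parity argument honest inside the reduct is the step I expect to be hardest. If it resists, I would fall back on a different route: realise $(V,R)$-indiscernible sequences as $(V,E)$-indiscernible sequences (after expanding and using Ramsey/compactness), apply Proposition \ref{k-aryisstronglyk-distal} to $\mathrm{RG}_k$, which is $k$-ary, and then take the reduct. The difficulty with the fallback is that indiscernibility over the $b_{\neq m}$ in the reduct need not lift to indiscernibility in $E$ simultaneously for all $m$. The expansion must therefore be chosen carefully, modifying $E$ by switching, which is exactly the freedom that $R$ forgets.

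\emph{Failure of $(k-1)$-distality.} I would construct a counterexample to Definition \ref{defnkdistal} for $k-1$ directly in the random hypergraph. Take an $E$-indiscernible sequence and $k$ inserted points $a_0,\dots,a_{k-1}$ such that every $R$-configuration missing one of the $a_m$ agrees with the sequence, while $R(a_0,\dots,a_{k-1},c)$ differs from the corresponding instance for sequence elements. Genericity (the extension axioms of $\mathrm{RG}_k$) lets me prescribe $E$ on the relevant $k$-sets so that the parity sum flips only when all $a_m$ are present. Quantifier elimination for $T_k$ then reduces every indiscernibility check to $R$-atomic data.
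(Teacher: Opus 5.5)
Your lower-bound half matches the paper's construction in outline. Concretely: take $J=I_0+a_0+I_1+\cdots+I_k$ of distinct points on which every $k$-set is an $E$-edge except $\{a_0,\dots,a_{k-1}\}$. Then each $J\setminus\{a_m\}$ carries the complete $k$-hypergraph, so it is indiscernible. A $(k+1)$-set containing all the $a_m$ has $k$ rather than $k+1$ edges among its $k$-subsets, so $J$ is not indiscernible.

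The gap is in the strong $k$-distality half. The one step that carries all the content is left unverified, and as written it targets the wrong configuration. Your slot count is off by one. Since $R$ is $(k+1)$-ary, an instance containing a coordinate $x$ of $a$ and a coordinate $d_i$ of each $b_i$ has no room for any parameter from $I\cup J$; the expression $R(x,c,d_1,\dots,d_k)$ has $k+2$ arguments. The only residual case is $R(x,d_1,\dots,d_k)$, to be compared with $R(x',d_1,\dots,d_k)$ for the corresponding coordinate $x'$ of $a'\in I$. Read as a well-formed instance, your sample identity for $R(a,c,\bar d)\oplus R(a,c',\bar d)$ has $|\bar d|=k-1$, so some $b_m$ is absent and the hypotheses already cover it. You also never say how any identity combines with the two hypotheses.

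What is needed is the cocycle identity $\sum_{l=0}^{k+1}\mathbbm{1}_R(w_{\neq l})\equiv 0\pmod 2$ for any $w_0,\dots,w_{k+1}$. It holds because expanding each term via $E$ counts every $k$-subset twice. It is a statement purely about $R$, which settles your worry about working inside the reduct. Applied to $(u,u',d_1,\dots,d_k)$ it reads
\[\mathbbm{1}_R(u,\bar d)+\mathbbm{1}_R(u',\bar d)\equiv\sum_{i=1}^k\mathbbm{1}_R(u,u',d_{\neq i})\pmod 2.\]
The positions in the sequences then matter. Fix $y\in J$ with corresponding coordinate $z$.
\begin{enumerate}[(i)]
    \item Indiscernibility of $I+J$ over $b_1\cdots b_k$ makes the left side vanish for $(u,u')=(x',z)$.
    \item Indiscernibility of $I+a+J$ over $b_{\neq i}$ gives $(a',y)\equiv_{b_{\neq i}}(a',a)$, both being increasing pairs. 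So each summand for $(x',z)$ equals the one for $(x',x)$.
    \item Hence the left side also vanishes for $(x',x)$, i.e.\ $R(x,\bar d)\leftrightarrow R(x',\bar d)$.
\end{enumerate}
This is the paper's proof, written there for singletons.

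Your fallback does not substitute for this. The paper notes that the proof of Proposition~\ref{k-aryisstronglyk-distal} does not pass to reducts of $k$-ary theories, and it leaves open whether such reducts are always strongly $k$-distal. You also give no way to choose a switching of $E$ that is indiscernible over every $b_{\neq m}$ and over $b_1\cdots b_k$ simultaneously.
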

\begin{proof}
Firstly, we show that $T_k$ is not $(k-1)$-distal. Indeed, let $v_1, ..., v_k\in V$ and $I_0, ..., I_k$ be infinite sequences of elements of $V$, such that:
\begin{enumerate}[(i)]
    \item The sequence $J:=I_0+v_1+I_1+v_2+\cdots+I_k$ consists of pairwise distinct elements;
    \item $V\models \neg E(v_1, ..., v_k)$;
    \item $V\models E(x_1, ..., x_k)$ for every subsequence $(x_1, ..., x_k)$ of $J$ distinct from $(v_1, ..., v_k)$.
\end{enumerate}
The reader is invited to check that $J$ is not indiscernible but $J\setminus \{v_i\}$ is indiscernible for all $i\in [k]$, showing that $T_k$ is not $(k-1)$-distal.

Next, we show that $T_k$ is strongly $k$-distal. By quantifier elimination, it suffices to check that if $I,J$ are infinite sequences of singletons and $a, b_1, ..., b_k$ are singletons, such that $I+a+J$ is indiscernible over $b_{\neq i}$ for all $i\in [k]$ and $I+J$ is indiscernible over $b_1\cdots b_k$, then $I+a+J$ is indiscernible over $b_1\cdots b_k$. By quantifier elimination, it suffices to fix $x\in I$ and show that $\mathbbm{1}_R(a, b_1, ..., b_k)=\mathbbm{1}_R(x, b_1, ..., b_k)$. Observe that for all $y\in a+J$,
\[\mathbbm{1}_R(y, b_1, ..., b_k)=\mathbbm{1}_R(x, b_1, ..., b_k)\Leftrightarrow \sum_{i=1}^k \mathbbm{1}_E(y, b_{\neq i})\equiv \sum_{i=1}^k \mathbbm{1}_E(x, b_{\neq i}) \mod 2,\]
and
\begin{align*}
    &\sum_{i=1}^k \mathbbm{1}_E(y, b_{\neq i})+ \sum_{i=1}^k\mathbbm{1}_E(x, b_{\neq i})\\
    &\equiv \sum_{i=1}^k \mathbbm{1}_E(y, b_{\neq i})+ \sum_{i=1}^k\mathbbm{1}_E(x, b_{\neq i})+2\sum_{1\leq i<j\leq k}\mathbbm{1}_E(x, y, b_{\neq i, j})\mod 2\\
    &\equiv \sum_{i=1}^k\mathbbm{1}_R(x, y, b_{\neq i})\mod 2.
\end{align*}

Fix any $y\in J$. Since $I+J$ is $b_1\cdots b_k$-indiscernible, $\mathbbm{1}_R(y, b_1, ..., b_k)=\mathbbm{1}_R(x, b_1, ..., b_k)$, so the argument above shows that $\sum_{i=1}^k\mathbbm{1}_R(x, y, b_{\neq i})\equiv 0\mod 2$. Since $I+a+J$ is $b_{\neq i}$-indiscernible for all $i\in [k]$, $\mathbbm{1}_R(x, y, b_{\neq i})=\mathbbm{1}_R(x, a, b_{\neq i})$ for all $i\in [k]$, and so $\sum_{i=1}^k\mathbbm{1}_R(x, a, b_{\neq i})\equiv 0\mod 2$. The argument above now shows that $\mathbbm{1}_R(a, b_1, ..., b_k)=\mathbbm{1}_R(x, b_1, ..., b_k)$ as required.
\end{proof}
\subsubsection{Johnson graph}\label{subsubsecJohnson}
For $k\in\N^+$, the \textit{Johnson graph} $\mathfrak{J}(k)$ is the graph on vertex set $\N^{(k)}$ (that is, the set of subsets of $\N$ of size $k$) such that for all $x,y\in \N^{(k)}$, $(x,y)$ is an edge if and only if $|x\cap y|=k-1$. Consider this as a structure in the language of graphs. We describe a finite relational language in which $\mathfrak{J}(k)$ is homogeneous.

For $i,j\in\N$ with $i\geq 2$ and $0\leq j\leq k-1$, let $E_{ij}$ be an $i$-ary relation symbol interpreted as
\[E_{ij}(x_1, ..., x_i):\Leftrightarrow |x_1\cap\cdots \cap x_i|=j,\]
so that $E_{2, k-1}$ is the graph relation.

Note that all such $E_{ij}$ are definable in $\mathfrak{J}(k)$. Indeed, for $j\in\N$ with $0\leq j\leq k-1$, and $x_1, x_2\in\N^{(k)}$,
\[|x_1\cap x_2|=j\Leftrightarrow |x_1\cap x_2|<j+1\wedge \exists y \left(|x_1\cap y|=j+1\wedge |x_2\cap y|=k-1\right),\]
so by downward induction on $j$, $E_{2j}$ is definable in $\mathfrak{J}(k)$ for all $0\leq j\leq k-1$. Now, for $i,j\in\N$ with $i\geq 3$ and $0\leq j\leq k-1$, and $x_1, ..., x_i\in \N^{(k)}$, we claim that for $N:={ki\choose j+1}+1$,
\begin{align*}
    &|x_1\cap\cdots\cap x_i|=j\Leftrightarrow |x_1\cap\cdots\cap x_i|<j+1\\
    &\phantom{adsfdsafhsadfhsadfjkd}\wedge \exists y_1, ..., y_N \left(\bigwedge_{1\leq p<q\leq N}|y_p\cap y_q|=j\wedge \bigwedge_{p=1}^N\bigwedge_{l=1}^i|x_l\cap y_p|=j\right).
\end{align*}
Assuming the claim, for each $i\in\N$ with $i\geq 3$, we have by downward induction on $j$ that $E_{ij}$ is definable in $\mathfrak{J}(k)$ for all $0\leq j\leq k-1$.

Let us argue the claim. The forward direction is straightforward; let $y_1, ..., y_N$ witness the backward direction, and suppose $|x_1\cap\cdots\cap x_i|\leq j-1$. For all $p\in [N]$, $y_p$ intersects each of $x_1, ..., x_i$ in a set of size $j$, so $y_p$ must contain at least $j+1$ elements from $x_1\cup\cdots \cup x_i$. By choice of $N$ and the pigeonhole principle, there are distinct $p,q\in [N]$ such that $y_p, y_q$ contain the same $j+1$ elements from $x_1\cup\cdots \cup x_i$, contradicting the fact that $|y_p\cap y_q|=j$.\footnote{The following quicker proof of the definability of $E_{ij}$, albeit using heavier machinery, was suggested to us by Bertalan Bodor. Let $\mathfrak{J}^+(k)$ be the expansion of $\mathfrak{J}(k)$ by $E_{ij}$ for all $i,j\in\N$ with $i\geq 2$ and $0\leq j\leq k-1$. By \cite[Example 1.2]{thomashypergraphs}, $\mathfrak{J}(k)$ and $\mathfrak{J}^+(k)$ have the same automorphism group ($\Sym(\N)$ acting in the natural way). Both structures are interpretable in $(\N, =)$, so are $\omega$-categorical. Thus, they are reducts of each other.}

Let $L_k:=\{E_{ij}: i,j\in\N, 2\leq i\leq k+1, 0\leq j\leq k-1\}$.
\begin{prop}
    For all $k\in\N^+$, $\mathfrak{J}(k)$ is homogeneous in the language $L_k$.
\end{prop}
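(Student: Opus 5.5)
We prove homogeneity by showing that whenever two finite tuples $\bar{x}=(x_1,\dots,x_n)$ and $\bar{y}=(y_1,\dots,y_n)$ of vertices of $\mathfrak{J}(k)$ satisfy the same quantifier-free $L_k$-formulas, there is a permutation $\pi\in\Sym(\N)$ whose induced action on $\N^{(k)}$ sends $x_l$ to $y_l$ for all $l$. This induced action consists of automorphisms of $\mathfrak{J}(k)$, and the map $\bar{x}\mapsto\bar{y}$ is then a partial isomorphism that extends to an automorphism, which is exactly homogeneity in $L_k$. Since the $E_{ij}$ are definable in $\mathfrak{J}(k)$ (shown above), every automorphism of $\mathfrak{J}(k)$ preserves them, so no further compatibility check is needed.

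The key object is the Venn diagram of the finite family $x_1,\dots,x_n$. For each nonempty $S\subseteq[n]$, define the atom
\[A_S(\bar{x}):=\bigcap_{l\in S}x_l\setminus\bigcup_{l\notin S}x_l.\]
These atoms are pairwise disjoint finite subsets of $\N$. A permutation $\pi$ with $\pi(x_l)=y_l$ for all $l$ exists if and only if $|A_S(\bar{x})|=|A_S(\bar{y})|$ for every nonempty $S$. Indeed, given equal atom sizes, choose bijections $A_S(\bar{x})\to A_S(\bar{y})$. Their union is a bijection between the finite sets $\bigcup_l x_l$ and $\bigcup_l y_l$, which extends to a permutation of $\N$ because both complements are infinite. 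This permutation maps each $x_l=\bigcup_{S\ni l}A_S(\bar{x})$ onto $y_l$. So it suffices to show that the quantifier-free $L_k$-type of $\bar{x}$ determines all atom sizes.

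By inclusion–exclusion, the atom sizes are determined by the intersection sizes $m_T:=|\bigcap_{l\in T}x_l|$ for nonempty $T\subseteq[n]$. Explicitly,
\[|A_S|=\sum_{T\supseteq S}(-1)^{|T\setminus S|}m_T.\]
If $|T|=1$ then $m_T=k$. If $2\le|T|\le k+1$, then $m_T$ is read off from which $E_{|T|,j}$ holds of $(x_l)_{l\in T}$. One caveat: when $m_T=k$ no $E_{|T|,j}$ with $j\le k-1$ holds, but then all $x_l$ with $l\in T$ are equal, which is recorded by equality in the quantifier-free type. Thus the type determines $m_T$ for all $|T|\le k+1$.

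The main obstacle is the case $|T|>k+1$, since $L_k$ has no relation symbols of arity above $k+1$. The plan is to show that for $|T|\ge k+2$ the value $m_T$ is determined by the $m_{T'}$ with $T'\subsetneq T$, $|T'|\le k+1$. Write $I:=\bigcap_{l\in T}x_l$. If $|I|=j>0$, take any $l_0\in T$. Each $l\in T\setminus\{l_0\}$ has $x_l\cap x_{l_0}\supseteq I$, and the sets $x_{l_0}\setminus x_l$ are subsets of the $k$-set $x_{l_0}$. The intersection of $x_{l_0}$ with the other $x_l$ equals the complement in $x_{l_0}$ of $\bigcup_l(x_{l_0}\setminus x_l)$. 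A union of subsets of a $k$-element set is already attained by at most $k$ of them, since each new set in a minimal subfamily contributes a fresh element. Hence $\bigcap_{l\in T}x_l=\bigcap_{l\in T'}x_l$ for some $T'\subseteq T$ with $|T'|\le k+1$ containing $l_0$. So $m_T=\min\{m_{T'}:T'\subseteq T,\ |T'|\le k+1\}$, and this minimum is visible in the $L_k$-type. Once this is established, every $m_T$, hence every $|A_S|$, is determined by the quantifier-free type, and the permutation argument above finishes the proof.
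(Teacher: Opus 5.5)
Your proof is correct. It shares the paper's framework: decompose $\bigcup_l x_l$ into the atoms of the Venn diagram (the paper's $\sim$-classes), compute atom sizes from the $L_k$-data by inclusion--exclusion, and build a bijection atom by atom that extends to a permutation of $\N$.

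The key combinatorial step is organised differently.
\begin{itemize}
\item The paper works locally. It writes each \emph{nonempty} atom $C$ as $(x_1\cap\cdots\cap x_p)\setminus(x_{p+1}\cup\cdots\cup x_q)$ with $q\le k$, using $|x_1\setminus C|\le k-1$. It maps $C$ onto the corresponding region on the other side. It then has to check separately, by inclusion--exclusion over at most $k+1$ sets, that $\sigma$ respects membership in every $x\in S$ and is injective.
\item You prove a global reduction, $m_T=\min\{m_{T'}:T'\subseteq T,\ |T'|\le k+1\}$. This shows the quantifier-free type determines every intersection size, hence every atom size, including the empty ones. Once all atom sizes match, $\pi(x_l)=y_l$ is automatic and no verification step is needed.
\end{itemize}

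Your route is somewhat cleaner and isolates a reusable fact: all intersection sizes are determined by the $(\le k+1)$-fold ones. The paper's version only ever handles nonempty atoms and inclusion--exclusion over boundedly many sets, at the price of the extra membership and injectivity check.

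One cosmetic point: the hypothesis $|I|=j>0$ in your reduction step is unnecessary. The case $|I|=0$ is covered by exactly the same argument, so drop the hypothesis rather than leave that case implicit.
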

\begin{proof}
For $S\subseteq \N^{(k)}$, an injection $\sigma: \bigcup S\to\N$, and a function $\alpha: S\to \N^{(k)}$, say that $\sigma$ \textit{induces} $\alpha$ if for all $x\in S$, $\sigma(x)=\alpha(x)$; equivalently, for all $x\in S$ and $n\in\bigcup S$, $n\in x$ if and only if $\sigma(n)\in \alpha(x)$. Note that such an $\alpha$ must be an $L_k$-isomorphism between $S$ and $\alpha(S)$.

Suppose $S,T\subseteq \N^{(k)}$ are finite and there is an $L_k$-isomorphism $\alpha: S\to T$. We show that there is an injection $\sigma:\bigcup S\to\N$ that induces $\alpha$. Note then that we would be done, as $\sigma$ extends to some $\tau\in \Sym(\N)$, and $\tau$ induces an $L_k$-automorphism of $\N^{(k)}$ that extends $\alpha$.

Define an equivalence relation $\sim$ on $\bigcup S$ by asserting that
\[m\sim n :\Leftrightarrow \text{ for all }x\in S, m\in x \text{ if and only if }n\in x.\]
Let $\mathcal{C}$ be the set of $\sim$-equivalence classes. We have the following.
\begin{enumerate}[(i)]
    \item For all $x\in S$ and $C\in\mathcal{C}$, either $C\subseteq x$ or $C\cap x=\emptyset$.
    \item For all $C\in\mathcal{C}$, there is some $x\in S$ such that $C\subseteq x$, so $1\leq |C|\leq k$.
\end{enumerate}
We will define $\sigma:\bigcup S\to\N$ by defining $\sigma|_C$ for each $C\in\mathcal{C}$. Let $C\in\mathcal{C}$. Then, there is $x_1\in S$ such that $C\subseteq x_1$. For all $n\in x_1\setminus C$, there is $x\in S$ such that either $n\not\in x$ and $C\subseteq x$, or $n\in x$ and $C\cap x=\emptyset$. Thus, as $|x_1\setminus C|\leq k-1$, there are $x_2, ..., x_q\in S$ such that $C=(x_1\cap \cdots\cap x_p)\setminus (x_{p+1}\cup \cdots \cup x_q)$, where $1\leq p\leq q\leq k$. By inclusion-exclusion, $|(x_1\cap \cdots\cap x_p)\setminus (x_{p+1}\cup \cdots \cup x_q)|$ can be expressed in terms of $\{|x_{i_1}\cap\cdots \cap x_{i_l}|: l\in [k], 1\leq i_1<\cdots<i_l\leq q\}$, so as $\alpha$ is an $L_k$-isomorphism, $D:=(\alpha(x_1)\cap \cdots\cap \alpha(x_p))\setminus (\alpha(x_{p+1})\cup \cdots \cup \alpha(x_q))$ has size $|C|$. We set $\sigma|_C$ to be any bijection from $C$ to $D$.

We claim that for all $x\in S$, if $C\subseteq x$ then $\sigma(C)\subseteq \alpha(x)$, and if $C\cap x=\emptyset$ then $\sigma(C)\cap \alpha(x)=\emptyset$. Suppose first that $C\subseteq x$. Then, $C=C\cap x=(x\cap x_1\cap \cdots\cap x_p)\setminus (x_{p+1}\cup \cdots \cup x_q)$. By inclusion-exclusion again, we have that
\[|\sigma(C)|=|C|=|(\alpha(x)\cap \alpha(x_1)\cap \cdots\cap \alpha(x_p))\setminus (\alpha(x_{p+1})\cup \cdots \cup \alpha(x_q))|=|\alpha(x)\cap \sigma(C)|,\]
and so $\sigma(C)\subseteq \alpha(x)$. Suppose instead that $\emptyset=C\cap x=(x\cap x_1\cap \cdots\cap x_p)\setminus (x_{p+1}\cup \cdots \cup x_q)$. By inclusion-exclusion again, we have that
\[0=|(\alpha(x)\cap \alpha(x_1)\cap \cdots\cap \alpha(x_p))\setminus (\alpha(x_{p+1})\cup \cdots \cup \alpha(x_q))|=|\alpha(x)\cap \sigma(C)|,\]
and so $\sigma(C)\cap \alpha(x)=\emptyset$.

We conclude now that $\sigma$ is an injection that induces $\alpha$. For injectivity: certainly $\sigma$ is injective on every $C\in\mathcal{C}$, and if $C,C'\in\mathcal{C}$ are distinct, then without loss of generality there is $x\in S$ such that $C\subseteq x$ and $C'\cap x=\emptyset$, so $\sigma(C)\subseteq \alpha(x)$ and $\sigma(C')\cap \sigma(C)\subseteq \sigma(C')\cap \alpha(x)=\emptyset$. The preceding paragraph now precisely says that $\sigma$ induces $\alpha$.
\end{proof}
We can now collect some properties of $T_k:=\Th(\mathfrak{J}(k))$.
\begin{cor}\label{Johnsonclassification}
    For all $k\in\N^+$, $\ar(T_k)\leq k+1$, and $T_k$ is $\omega$-categorical, $\omega$-stable, and totally trivial --- hence strictly (strongly) 2-distal.
\end{cor}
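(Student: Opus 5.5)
The plan is to read off each clause of Corollary \ref{Johnsonclassification} from the preceding proposition, from the interpretation of $\mathfrak{J}(k)$ in $(\N,=)$, and from the correspondences with higher-arity distality.

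\emph{Arity and $\omega$-categoricity.} The language $L_k$ is finite and relational, with every symbol of arity at most $k+1$. Since $\mathfrak{J}(k)$ is homogeneous in $L_k$, its theory in $L_k$ has quantifier elimination. Each $E_{ij}$ is definable in the graph language, as shown above, so $T_k$ is $(k+1)$-ary and $\ar(T_k)\leq k+1$. Homogeneity in a finite relational language also gives $\omega$-categoricity, since there are finitely many quantifier-free $n$-types for each $n$.

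\emph{$\omega$-stability.} $\mathfrak{J}(k)$ is interpretable in the pure set $(\N,=)$, whose theory is $\omega$-stable, and interpretations preserve $\omega$-stability. Alternatively, one can count types directly: over a countable model $M$, a type of a $k$-set $y$ is determined by $y\cap\bigcup M$ together with how many elements of $y$ are new. This gives countably many types.

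\emph{Total triviality.} This is the main step. I would work in the monster model of $\Th(\N,=)$, where $T_k$ is interpreted. There, forking independence is $A\indep_D B$ iff $\mathrm{acl}(AD)\cap\mathrm{acl}(BD)\subseteq\mathrm{acl}(D)$, and $\mathrm{acl}$ is just the union of the underlying sets. One must check that forking in $T_k$ agrees with forking computed via underlying sets. The cleanest way is to note that $\mathfrak{J}(k)$ and the structure of $k$-sets are bi-interpretable up to imaginaries, since each point $n$ of the underlying set is coded by the set of vertices containing it. Then $\mathrm{acl}^{eq}$ is computed by underlying points, and forking in $T_k$ is determined by $\mathrm{acl}^{eq}$ in this trivial-geometry setting.

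With this description, fix $D$ and suppose $a\indep_D b$ and $a\indep_D c$. Then the underlying points of $a$ outside $\mathrm{acl}(D)$ avoid those of $b$ and those of $c$, hence avoid those of $bc$, so $a\indep_D bc$. This is total triviality. Lemma \ref{lemmamodel} lets us take $D$ to be a model, which makes the acl-description cleaner.

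\emph{Distality consequences.} Total triviality implies $1$-totally trivial and $1$-trivial, so by Facts \ref{walkerequiv} and \ref{artemfrancis} the theory is $2$-distal and strongly $2$-distal. It remains to show $T_k$ is not distal. $T_k$ is stable with infinite models, and a stable distal theory would have all indiscernible sequences totally indiscernible and distal, forcing sequences to be constant. Concretely, take an indiscernible sequence of $k$-sets with pairwise disjoint underlying sets; it is an indiscernible set. Insert a vertex $a$ whose underlying set meets both $I_1$ and $J$ in the external characterisation, and the characterisation fails. Hence $T_k$ is strictly (strongly) $2$-distal.

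The main obstacle is the precise justification that forking in $T_k$ is governed by underlying-set algebraic closure, which requires handling imaginaries carefully.
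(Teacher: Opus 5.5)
Your proposal is correct. For the arity bound, $\omega$-categoricity, $\omega$-stability and the final appeal to Fact~\ref{artemfrancis} it agrees with the paper; the genuine difference is how you obtain total triviality.

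\textbf{The two routes.} The paper never analyses forking in $T_k$. It feeds $\ar(T_k)\leq k+1$ into Palac\'in's theorem (Proposition~\ref{palacin}(ii)) to get $k$-total triviality. It then uses the fact that $\omega$-categorical $\omega$-stable theories have finite $U$-rank to invoke Goode's collapse (Proposition~\ref{collapsefiniterank}). You instead identify forking outright: $a\indep_D b$ iff the underlying point sets of $a$ and $b$ meet only inside that of $D$. Total triviality is then a one-line check.

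\textbf{What each buys.} Your route is more elementary and more informative, and it does not use the arity bound at all. The same idea would also handle the Cherlin--Lachlan structure of Proposition~\ref{cherlinlachlan} without 1-basedness or Macpherson's theorem. The paper's route is shorter given the cited machinery and showcases the general results of Section~\ref{sec2}.

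\textbf{The step you flag.} This step is essential, not cosmetic. An interpretation of $\mathfrak{J}(k)$ in $(\N,=)$ alone does not control forking: $\N^{(2)}$ with bare equality is also interpretable there, yet in it $\{0,1\}\indep\{0,2\}$. What makes your argument work is that the points are imaginaries of $T_k$. For $k\geq 2$, the family $\{x : n\in x\}$ is uniformly definable as $\{x : |x\cap y\cap z|=1\}$ with $|y\cap z|=1$, using the definability of the $E_{ij}$ established in the paper. Then each vertex is interalgebraic with its set of points in $T_k^{\mathrm{eq}}$. The induced structure on the point sort is $\Sym(\N)$-invariant, hence that of the pure set. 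So forking in $T_k$ is forking in $(\N,=)$, as you claim.

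\textbf{A slip in the non-distality example.} Your concrete witness is wrong as stated. The members of $I$ and $J$ are pairwise disjoint, so if $a$ meets any of them then $I+a+J$ is not indiscernible. Take $a$ disjoint from everything and $B=\{a\}$ instead. In any case, your abstract argument already suffices: a non-constant indiscernible set contradicts distality, and stable theories with infinite models have one. The paper leaves this point implicit.
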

\begin{proof}
    Since $\mathfrak{J}(k)$ is homogeneous in the language $L_k$, it has quantifier elimination in $L_k$, so $\ar(T_k)\leq k+1$. Moreover, $\mathfrak{J}(k)$ is interpretable in $(\N,=)$, so $T_k$ is $\omega$-categorical and $\omega$-stable. By Proposition \ref{palacin}(ii), we have that $T_k$ is $k$-totally trivial (alternatively, by Corollary \ref{corkary}, we have that $T_k$ is trivial). Since $\omega$-categorical $\omega$-stable theories have finite $U$-ranks \cite[Theorem 5.1]{omegacatomegastable}, by Proposition \ref{collapsefiniterank}, we have that $T_k$ is totally trivial. By Fact \ref{artemfrancis}, we conclude that $T_k$ is strictly (strongly) 2-distal.
\end{proof}
Towards the goal of this subsection, we would like to know if $\ar(T_k)\geq 3$. Observe that $\ar(T_k)\geq l$ if and only if, given $x_1, ..., x_l\in \N^{(k)}$, the values of $|x_{i_1}\cap \cdots \cap x_{i_{l-1}}|$ for $1\leq i_1\leq\cdots\leq i_{l-1}\leq l$ do not determine $|x_1\cap \cdots \cap x_l|$.

Now, since $\mathfrak{J}(1)\cong (\N^{(1)},\neq)$, $\ar(T_1)=2$. However, $\ar(T_2)\geq 3$, witnessed by the triples $(\{0, 1\}, \{0, 2\}, \{1, 2\})$ and $(\{0, 1\}, \{0, 2\}, \{0, 3\})$: in either triple, the sets intersect pairwise in a singleton, but $\{0, 1\}\cap \{0, 2\}\cap\{1, 2\}=\emptyset$ while $\{0, 1\}\cap\{0, 2\}\cap\{0, 3\}=\{0\}$. Since $\ar(T_2)\leq 3$, we conclude that $\ar(T_2)=3$. Observe that $\ar(T_k)\leq \ar(T_{k+1})$ for all $k\in\N^+$, so we have that $\ar(T_k)\geq 3$ for all $k\geq 2$.

The reader may be tempted to conjecture that $\ar(T_k)=k+1$ for all $k\in\N^+$. This is false: indeed, the reader is invited to check that $\ar(T_3)=3$. Working out the precise value of $\ar(T_k)$ appears to be a difficult (or, at least, involved) combinatorial problem, and is outside the scope of this paper. However, it would be instructive to know at least that $\ar(T_k)\to\infty$, as the theories $T_k$ would then constitute examples of strictly (strongly) 2-distal theories that are arbitrarily far from being binary. This is what we show.
\begin{prop}
    For all $l\in\N^+$, there is $k\in\N^+$ such that $\ar(T_k)\geq l$.
\end{prop}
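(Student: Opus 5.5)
The plan is to use the combinatorial criterion stated just before the proposition. Since $\mathfrak{J}(k)$ is homogeneous in $L_k$, the type of a tuple $(x_1, ..., x_m)$ is determined by the numbers $|x_{i_1}\cap\cdots\cap x_{i_r}|$ over all nonempty subfamilies. Note that equalities $x_i=x_j$ are encoded by $|x_i\cap x_j|=k$. So it suffices, for each $l\geq 2$, to find $k$ and two $l$-tuples $\bar{x},\bar{x}'$ of $k$-subsets of $\N$ with two properties: every subfamily of size at most $l-1$ has the same intersection size in $\bar{x}$ as in $\bar{x}'$, but $|x_1\cap\cdots\cap x_l|\neq|x'_1\cap\cdots\cap x'_l|$. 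Then $\bar{x}_{\neq i}$ and $\bar{x}'_{\neq i}$ have the same type for every $i$, while $\bar{x},\bar{x}'$ do not. Hence no Boolean combination of formulas in fewer than $l$ variables distinguishes them, and $\ar(T_k)\geq l$. The case $l=1$ is trivial.

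I would encode an $l$-tuple of finite sets by its Venn counts. For nonempty $S\subseteq[l]$, let $n_S$ be the number of elements lying in exactly the sets $x_i$ with $i\in S$. Then for nonempty $F\subseteq[l]$,
\[\Big|\bigcap_{i\in F}x_i\Big|=\sum_{S\supseteq F}n_S.\]
Conversely, any choice of nonnegative integers $n_S$ is realised by taking pairwise disjoint blocks of fresh natural numbers of sizes $n_S$ and letting $x_i$ be the union of the blocks with $i\in S$.

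The key step is a Möbius-inversion choice. Set $n_S:=1$ if $l-|S|$ is even and $n'_S:=1$ if $l-|S|$ is odd, with all other values $0$. Then $n_S-n'_S=(-1)^{l-|S|}$. For nonempty $F$,
\[\sum_{S\supseteq F}(-1)^{l-|S|}=\sum_{U\subseteq [l]\setminus F}(-1)^{|[l]\setminus F|-|U|},\]
which is $0$ unless $F=[l]$, in which case it is $1$. Hence all intersections over proper nonempty subfamilies agree, while the full intersections have sizes $1$ and $0$. In particular (taking $|F|=1$, which needs $l\geq 2$) each $x_i$ and each $x'_i$ has the same size. This size is independent of $i$ because $n_S$ depends only on $|S|$. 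Explicitly it equals $k:=\sum_{j\text{ even}}\binom{l-1}{j}=2^{l-2}$ for $l\geq 2$, so all sets in both tuples are $k$-subsets.

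The step I expect to need the most care is checking that the criterion really applies: that equal intersection-size data for all subfamilies of size at most $l-1$ gives equal $L_k$-quantifier-free types, hence equal types. This should follow directly, since each $E_{ij}$ with $i\leq l-1$ (with repeated arguments allowed) is determined by these sizes, and quantifier elimination then does the rest. It is also worth noting that the $x_i$ need not be distinct; the intersection data handles equalities automatically. This gives $\ar(T_{2^{l-2}})\geq l$, proving the proposition, and also yields the explicit function $g$ mentioned in the introduction.
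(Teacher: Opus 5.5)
Your proof is correct. It yields the same kind of witness as the paper: two $l$-tuples of $k$-sets whose intersections over at most $l-1$ of the sets agree in size, with full intersections of sizes $1$ and $0$. The construction, however, is different.

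The paper builds its set systems greedily, from the top down. Starting from $x_i=\{0\}$ and $y_i=\emptyset$, it pads all $(l-1)$-fold intersections to a common size using fresh elements lying in exactly the relevant sets, then all $(l-2)$-fold intersections, and so on. In your language, stage $p$ only increases the Venn counts $n_S$ with $|S|=l-p$, so it never disturbs intersections of more sets. This needs no identity, but it gives no control on $k$, since the padding target $r_{l-p}$ is merely some bound on all current set sizes.

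You instead solve the whole linear system at once. Intersection sizes are up-set sums of Venn counts, so Möbius inversion forces $n_S-n'_S=c(-1)^{l-|S|}$. Taking $c=1$ and splitting into positive and negative parts gives the explicit value $k=2^{l-2}$.

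Your approach actually gives more than the proposition, because the same inversion shows $2^{l-2}$ is optimal. If two tuples of $k$-sets agree on all subfamilies of size less than $j$ but differ on the full $j$-fold intersection, then $n_S-n'_S=c(-1)^{j-|S|}$ with $c\neq 0$. Hence one of the tuples contains a set of size at least $2^{j-2}$. Types in $T_k$ are determined by intersection sizes, by homogeneity in $L_k$. So taking a minimal subfamily on which two tuples differ, and then applying compactness, shows that $T_k$ is $(l-1)$-ary whenever $k<2^{l-2}$. Conversely, padding your sets with private elements gives $\ar(T_k)\geq l$ whenever $k\geq 2^{l-2}$.

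Together these give $\ar(T_k)=\lfloor\log_2 k\rfloor+2$ exactly. This settles the computation the paper calls involved and leaves aside. It is consistent with the paper's values $\ar(T_1)=2$ and $\ar(T_2)=\ar(T_3)=3$.
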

\begin{proof}
    By induction on $p\in [l]$, we construct systems of finite sets $X^{(p)}=(x^{(p)}_1, ..., x^{(p)}_l), Y^{(p)}=(y^{(p)}_1, ..., y^{(p)}_l)$ such that:
    \begin{enumerate}[(i)]
        \item $|x^{(p)}_1\cap \cdots \cap x^{(p)}_l|=1, |y^{(p)}_1\cap \cdots \cap y^{(p)}_l|=0$;
        \item For all $j\in\N^+$ with $l-p<j<l$, there is $r_j\in\N$ such that the intersection of any $j$ distinct sets from $X^{(p)}$ (respectively, $Y^{(p)}$) has size $r_j$.
    \end{enumerate}
    We would then be done: the sets in $X^{(l)}, Y^{(l)}$ all have the same size, say $k$, and the set systems $X^{(l)}, Y^{(l)}$ witness that $\ar(T_k)\geq l$.

    We begin the induction by setting $x^{(1)}_1=\cdots=x^{(1)}_l=\{0\}$ and $y^{(1)}_1=\cdots=y^{(1)}_l=\emptyset$. Suppose for some $i\in [k]$ that we have constructed $X^{(p)}$ and $Y^{(p)}$. Let $r_{l-p}\in\N$ be such that every set in $X^{(p)}$ or $Y^{(p)}$ has size at most $r_{l-p}$. For each $I\in [l]^{(l-p)}$, for $d:=r_{l-p}-|\bigcap_{i\in I} x^{(p)}_i|$, add distinct new elements $n_1, ..., n_d$ to $\bigcap_{i\in I} x^{(p)}_i$ (that is, add $n_1, ..., n_d$ to each $x^{(p)}_i$ for $i\in I$); likewise, for $e:=r_{l-p}-|\bigcap_{i\in I} y^{(p)}_i|$, add $e$ distinct new elements to $\bigcap_{i\in I} y^{(p)}_i$. Let $X^{(p+1)}$ and $Y^{(p+1)}$ collect the resulting sets. Observe then that the intersection of any $l-p$ distinct sets from $X^{(p+1)}$ (respectively, $Y^{(p+1)}$) has size $r_{l-p}$, and we have not changed the sizes of the $j$-fold intersections for $j>l-p$.
\end{proof}

Thus, we obtain strictly (strongly) 2-distal theories with arbitrarily large finite arity. What about an example of infinite arity? Certainly, we can amalgamate the structures $\mathfrak{J}(k)$ into a multi-sorted structure with one sort for each $\mathfrak{J}(k)$, and the resulting structure would be strictly (strongly) 2-distal with infinite arity. However, this construction is rather contrived; we now encounter a much more natural one.
\subsubsection{A permutation structure of Cherlin and Lachlan}
The following structure appears in \cite[Section 1.6]{cherlinlachlan}, work of Cherlin and Lachlan. Let $M:=\{\{\{a, b\}, \{c, d\}\}: a,b,c,d\in\N\text{ distinct}\}$, and let $\Sym(\N)$ act on $M$ (and hence $M^k$ for each $k\in\N^+$) in the obvious way. For each $k\in\N^+$, there are finitely many orbits on $M^k$. There is a canonical structure on $M$ associated with this action, whose automorphism group is $\Sym(\N)$: for each $k\in\N^+$, we have a $k$-ary relation symbol for each orbit of $M^k$, interpreted as membership of the orbit.

The language $L$ is thus relational, with finitely many relation symbols of each arity. Let $T=\Th_L(M)$. As observed in \cite[Section 1.6]{cherlinlachlan}, $\ar(T)=\omega$. Indeed, fix $k\in\N^+$, and let $a_1, ..., a_{k+1}, b_1, ..., b_{k+1}\in\N$ be pairwise distinct. For each $i\in [k+1]$, let $m_i=\{\{a_i, a_{i+1}\}, \{b_i, b_{i+1}\}\}$, where $a_{k+2}:=a_1$ and $b_{k+2}:=b_1$; also, let $m'_{k+1}:=\{\{a_1, b_{k+1}\},\{b_1, a_{k+1}\}\}$. Then, the tuples $(m_1, ..., m_{k+1})$ and $(m_1, ..., m_k, m'_{k+1})$ witness that $\ar(T)>k$.
\begin{prop}\label{cherlinlachlan}
The theory $T$ is $\omega$-categorical, $\omega$-stable, and totally trivial --- hence strictly (strongly) 2-distal.    
\end{prop}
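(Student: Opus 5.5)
The plan follows the template of Corollary \ref{Johnsonclassification}, except that finite arity is no longer available, so triviality cannot be obtained from Proposition \ref{palacin} or Corollary \ref{corkary} and has to be checked directly.

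\emph{Step 1: $\omega$-categoricity and $\omega$-stability.} By construction $\mathrm{Aut}(M)=\Sym(\N)$, and $\Sym(\N)$ has finitely many orbits on each $M^k$. Ryll-Nardzewski then gives $\omega$-categoricity. Moreover, $M$ is interpretable in $(\N,=)$: it is a definable quotient of a definable subset of $\N^4$, and every orbit relation is $\emptyset$-definable there. Since $\omega$-stability is preserved under interpretation and $\Th(\N,=)$ is $\omega$-stable, $T$ is $\omega$-stable. By \cite[Theorem 5.1]{omegacatomegastable}, $T$ has finite $U$-ranks. Proposition \ref{collapsefiniterank} then reduces total triviality to plain triviality.

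\emph{Step 2: triviality, the main step.} Work in the monster model, which is again the structure built on a big pure set $\Omega$ with $m=\{\{a,b\},\{c,d\}\}$, interpreted in $(\Omega,=)$ with elimination of imaginaries up to finite sets. For each tuple $m$, let $\mathrm{supp}(m)\subseteq\Omega$ be the finite set of underlying points. In the pure set, $\mathrm{acl}^{eq}$ is governed by supports, and forking is given by
\[A\indep_D B \iff \mathrm{supp}(A)\cap \mathrm{supp}(B)\subseteq \mathrm{acl}(D),\]
where $\mathrm{acl}(D)$ is computed in $\Omega$ as $\mathrm{supp}(D)$. This holds in $T^{eq}$ for the pure set, and hence in $T$, because forking in an interpretable structure agrees with forking computed in the ambient stable structure. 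I would justify this via stationarity and the fact that $\mathrm{acl}^{eq}$ of a finite set in the pure set is determined by its support.

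Given this description, triviality is immediate. If $a\indep_D b$, $a\indep_D c$ and $b\indep_D c$, then
\[\mathrm{supp}(a)\cap(\mathrm{supp}(b)\cup\mathrm{supp}(c))\subseteq \mathrm{supp}(D),\]
so $a\indep_D bc$. The same computation shows directly that $T$ is totally trivial, so the appeal to Proposition \ref{collapsefiniterank} can even be avoided.

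I expect the obstacle to be making the forking characterisation rigorous. The model-theoretic algebraic closure in $M$ itself may differ from supports: a single element $m$ only determines its points up to a finite ambiguity, and the base $D$ consists of elements of $M$, not points. I would handle this in one of two ways. The first is to pass to $M^{eq}$, which shares forking with $M$ and contains the pure set $\Omega$ as an imaginary sort (the sort of $4$-point supports, and so on). The second is to use Lemma \ref{lemmamodel} to assume $D$ is a model, in which case $\mathrm{supp}(D)$ is an infinite subset closed in the obvious sense.

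\emph{Step 3: conclusion.} Total triviality is $1$-total triviality, so Fact \ref{artemfrancis} gives strong $2$-distality, hence $2$-distality. For strictness, $T$ is not distal, since no infinite stable theory is distal. It is therefore strictly $2$-distal and strictly strongly $2$-distal.
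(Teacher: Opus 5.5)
Your proposal is correct. Steps 1 and 3 match the paper; the difference is entirely in how triviality is obtained.

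\emph{The paper's route.} It argues by contradiction using heavy machinery:
\begin{enumerate}[(a)]
\item $\omega$-categorical $\omega$-stable theories are $1$-based;
\item a non-trivial $1$-based $\omega$-categorical theory interprets an infinite vector space (de Piro);
\item $M$ is a reduct of a structure homogeneous in a finite binary language (Thomas);
\item no infinite group is interpretable in such a structure (Macpherson).
\end{enumerate}
This yields only plain triviality, and total triviality then requires Goode's finite-rank collapse (Proposition \ref{collapsefiniterank}).

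\emph{Your route.} You compute forking explicitly via supports, using weak elimination of imaginaries in the pure set. This gives total triviality in one line and makes Proposition \ref{collapsefiniterank} unnecessary.

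\emph{One point to make explicit.} You should state why forking transfers from $(\Omega,=)^{\mathrm{eq}}$ to $T$. This requires $M$ to carry the \emph{full} induced structure, not merely some structure definable there; forking in a reduct can differ from forking in the expansion, as Evans's example in Proposition \ref{reducts} shows. Here it does hold: any $\emptyset$-definable subset of $M^n$ in $(\Omega,=)^{\mathrm{eq}}$ is $\Sym(\Omega)$-invariant, hence a finite union of orbits, i.e.\ of basic relations. Once this is noted, your worry about $\mathrm{acl}$ computed inside $M$ disappears, because the support characterisation lives entirely in the ambient structure. Neither of your two proposed workarounds is needed.

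\emph{Trade-offs.} Your argument is elementary and self-contained, but it is tied to the explicit interpretation in the pure set. It equally handles the Johnson graphs, whose automorphism group is also $\Sym(\N)$. The paper's argument applies verbatim to every $\omega$-categorical $\omega$-stable structure with an expansion homogeneous in a finite relational language, as remarked after its proof.
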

\begin{proof}
    Our argument is inspired by \cite[before Fact 2.6]{koponen}. Observe that $M$ is interpretable in $(\N,=)^{\text{eq}}$, so $T$ is $\omega$-categorical and $\omega$-stable. By \cite[Theorem 2.5.12]{anandbook}, such theories are 1-based. Suppose $T$ were not trivial. By \cite[Corollary 3.23]{depiro}, an infinite vector space is interpretable in every non-trivial, 1-based, $\omega$-categorical theory, and hence in $T$. Now, by \cite[after Theorem 2]{simonthomas}, $M$ is a reduct of a structure $M'$ that is homogeneous in a finite binary relational language, so $M'$ interprets an infinite vector space. This contradicts \cite[Theorem 1.1]{dugald}, which says that no infinite group is interpretable in a structure that is homogeneous in a finite relational language.

    Thus, $T$ is trivial. Since $\omega$-categorical $\omega$-stable theories have finite $U$-ranks \cite[Theorem 5.1]{omegacatomegastable}, by Proposition \ref{collapsefiniterank}, we have that $T$ is totally trivial. By Fact \ref{artemfrancis}, we conclude that $T_k$ is strictly (strongly) 2-distal.
\end{proof}
The proof above shows that every structure with $\omega$-categorical $\omega$-stable theory which has an expansion that is homogeneous for a finite relational language is totally trivial. This gives a different proof of Corollary \ref{Johnsonclassification}.
\subsection{Non-preservation under taking reducts}\label{subsecreducts}
A salient (and undesirable) feature of distality is that it is not preserved under reducts. Indeed, since the theory of the pure set is stable, it cannot be distal, and yet it is a reduct of every theory. It is, however, (strongly) 2-distal, so it does not give rise to an easy proof that (strong) $k$-distality is not preserved under taking reducts. Nonetheless, it is natural to expect this to be true, and we show that this is indeed the case.
\begin{prop}\label{reducts}
    For all $k\in\N^+$, $k$-distality and strong $k$-distality are not necessarily preserved under taking reducts.
\end{prop}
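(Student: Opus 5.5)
For $k=1$ no construction is needed. The theory of a dense linear order is distal. Its reduct to the empty language, the pure set, is stable, and as the paper notes it is therefore not distal. Distality and strong distality coincide for $k=1$, so this handles both notions.

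For $k\geq 2$ the plan is to find a $k$-ary theory, which is strongly $k$-distal and hence $k$-distal by Proposition \ref{k-aryisstronglyk-distal}, with a reduct that is not $k$-distal. Since strong $k$-distality implies $k$-distality, one such pair settles both notions at once. The kay-graph of Section \ref{subsubseckay} shows why the construction needs some care. Its parity relation $R$ is a reduct of the $k$-ary theory $\text{RG}_k$ and has arity $k+1$, yet $R$ is still $k$-distal. The reason is the linear identity
\[\sum_{i=1}^{k}\mathbbm{1}_R(x,y,b_{\neq i})\equiv \mathbbm{1}_R(x,b)+\mathbbm{1}_R(y,b) \mod 2,\]
used in the proof there: it lets $R$-information over $b_1\cdots b_k$ be recovered from $R$-information over the sets $b_{\neq i}$. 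The aim is therefore a $(k+1)$-ary relation, defined from $k$-ary data, that satisfies no such linear relation.

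Concretely, I would start from the $k$-ary theory of a set carrying two independent generic $k$-uniform hypergraphs $E$ and $F$. This is the Fra\"iss\'e limit of finite sets with two $k$-hypergraph relations, it has quantifier elimination in $\{E,F\}$, and so it is $k$-ary. Let $R_E$ and $R_F$ be the parity relations built from $E$ and $F$ as in Section \ref{subsubseckay}, and put $S:=R_E\wedge R_F$. The candidate reduct is $(V,S)$. One could replace $\wedge$ by another non-affine Boolean combination, or use more hypergraphs, if this makes the next step easier. The point of the conjunction is to destroy the parity cancellation above.

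To show $(V,S)$ is not $k$-distal, I would imitate the proof that $T_k$ is not $(k-1)$-distal, one arity higher. Take sequences $I_0,\dots,I_{k+1}$ and points $a_0,\dots,a_k$ whose union $J$ consists of distinct elements, and try to arrange that $S$ fails on the single $(k+1)$-set $\{a_0,\dots,a_k\}$ and holds on every other $(k+1)$-subset of $J$. Every $(k+1)$-subset of $J\setminus\{a_m\}$ differs from $\{a_0,\dots,a_k\}$, so for each $m$ the sequence $J\setminus\{a_m\}$ would look homogeneous. The whole of $J$ would not be indiscernible.

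Since $E$ and $F$ are generic, any finite assignment of $E$- and $F$-values on $k$-subsets is realised, so the question is combinatorial. I need assignments of $E$ and $F$ on the $k$-subsets of $J$ whose parity functions $\delta E$ and $\delta F$ on $(k+1)$-sets have conjunction equal to the complement of $\mathbbm{1}_{\{a_0,\dots,a_k\}}$. The difficulty is that $\delta E$ and $\delta F$ are coboundaries and cannot be chosen freely. I would first try to take $\delta F$ identically $1$ away from the special set and equal to $0$ on it, and $\delta E$ identically $1$, checking which of these patterns are coboundaries. If a parity obstruction appears, I would switch from $\wedge$ to another Boolean combination, or use three hypergraphs, so that the target pattern becomes attainable.

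The main obstacle is the next step. Controlling $S$ on $(k+1)$-sets is not enough; I need genuine indiscernibility of each $J\setminus\{a_m\}$ in the reduct. The reduct has no obvious quantifier elimination, since formulas may quantify over the hidden $E$ and $F$. I would handle this by making $J\setminus\{a_m\}$ indiscernible in the full structure $(V,E,F)$ with respect to all configurations that $S$ can detect. One route is to choose $E$ and $F$ via an automorphism-invariant, order-homogeneous assignment on $J\setminus\{a_m\}$, and only then modify the data on $k$-subsets inside $\{a_0,\dots,a_k\}$, whose $E$- and $F$-values are invisible to any $(k+1)$-set that omits some $a_m$. Alternatively, one could prove a partial quantifier elimination for $(V,S)$ using the Fra\"iss\'e-type genericity of $E$ and $F$.
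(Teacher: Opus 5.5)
\textbf{There is a genuine gap for $k\geq 2$.} Your $k=1$ case is fine, and it is what the paper means when it says it suffices to take $k\geq 2$. For $k\geq 2$, though, the one claim that matters, that $(V,S)$ is not $k$-distal, is never established, and the steps you sketch towards it fail as stated.

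\emph{The target pattern is impossible.} Since $\mathbbm{1}_{R_E}(U)\equiv\sum_{u\in U}\mathbbm{1}_E(U\setminus\{u\})$, every $(k+2)$-set $W$ satisfies $\sum_{w\in W}\mathbbm{1}_{R_E}(W\setminus\{w\})\equiv 0 \pmod 2$. Suppose $R_E$ holds on every $(k+1)$-subset of $J$ other than $A:=\{a_0,\ldots,a_k\}$. Applying the identity to $W\subseteq I_0$ forces $k$ to be even. Applying it to $W=A\cup\{x\}$ then forces $R_E(A)$. The same holds for $R_F$, so $S(A)$ holds after all. This is the same cancellation that makes the kay-graph $k$-distal, and taking a conjunction does not get around it.

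\emph{Your route to indiscernibility fails.} The structure $(V,E,F)$ is $k$-ary, hence $k$-distal by Proposition \ref{k-aryisstronglyk-distal}. So if every $J\setminus\{a_m\}$ were indiscernible in $(V,E,F)$, then $J$ would be indiscernible there too, and hence in the reduct. Nor are the $E$- and $F$-values on a $k$-set $A\setminus\{a_j\}$ invisible. That set is a face of $(A\setminus\{a_j\})\cup\{x\}$ for every $x\in J\setminus A$, and each such set is a $(k+1)$-subset of $J\setminus\{a_j\}$. Any witness therefore needs some $J\setminus\{a_m\}$ to be indiscernible in $(V,S)$ but not in $(V,E,F)$. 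That requires knowing what the reduct defines, which is exactly the step you leave open.

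\emph{Quantified $S$-formulas do see the hidden data.} Take $k=2$ and put $E=\{a_0a_1\}$, $F=\{a_1a_2\}$ on $J$. Then $A$ is the only $S$-triple in $J$, so the $S$-pattern on triples is the one you want. Yet the formula
\[\exists w\,\bigl(S(p,s,w)\wedge S(s,r,w)\wedge S(r,q,w)\wedge\neg S(p,q,w)\wedge\neg S(p,r,w)\wedge\neg S(q,s,w)\bigr)\]
holds of $(a_0,a_1,x,y)$ for $x,y\in I_2$: take $w$ joined in both $E$ and $F$ to exactly $a_0$ and $x$. It fails on every increasing $4$-tuple from $I_0$, because there it would need two cuts of $K_4$ whose intersection is a path of length three. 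So $J\setminus\{a_2\}$ is not indiscernible.

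\emph{The missing idea.} What you are trying to build is a $k$-ary theory with a non-$k$-distal reduct. The paper says at the end of its subsection on reducts that it has no such example, so completing your plan would need a genuinely new idea. The paper sidesteps this by staying among stable theories. There, Facts \ref{walkerequiv} and \ref{artemfrancis} convert distality into forking triviality, so no indiscernible sequence in a reduct has to be analysed by hand. Its argument runs as follows:
\begin{enumerate}[(i)]
    \item Evans's stable, 1-based, trivial digraph theory $T_1'$ is totally trivial by Goode.
    \item Hence $T_1'$ is strongly $2$-distal, and so strongly $k$-distal for all $k\geq 2$.
    \item Its reduct $T_1$, obtained by forgetting edge orientations, is stable and not $k$-trivial for any $k$.
    \item By Fact \ref{walkerequiv}, $T_1$ is therefore not $k$-distal for any $k\geq 2$, which handles both notions at once.
\end{enumerate}
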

\begin{proof}
    It suffices to assume $k\geq 2$. In \cite{ampledividing}, Evans gives a theory $T_1'$ that is stable, 1-based, and trivial, and a reduct $T_1$ of $T'_1$ that is not trivial (in fact, he argues additionally that $T_1$ is not $k$-trivial for any $k\in\N^+$). Roughly speaking, $T_1'$ is the theory of a certain generic digraph, and $T_1$ is obtained by forgetting the orientation of the edges; moreover, $T_1$ is isomorphic to an \textit{ab initio} Hrushovski construction (see \cite{evans}).

    By \cite[Proposition 8]{goode}, stable, 1-based, and trivial theories are totally trivial, so in fact $T_1'$ is totally trivial. By Fact \ref{artemfrancis}, $T_1'$ is strongly 2-distal, and hence strongly $k$-distal for all $k\geq 2$. Now, $T_1$ is not $k$-trivial for any $k\in\N^+$, and hence not $k$-distal for any $k\geq 2$ by Fact \ref{walkerequiv}.
\end{proof}

On the topic of reducts, we observe a curious phenomenon concerning (strong) $k$-distality and reducts of $k$-ary theories. Recall Proposition \ref{k-aryisstronglyk-distal}, which says that every $k$-ary theory $T$ is strongly $k$-distal. The proof no longer goes through if $T$ is merely a \textit{reduct} of a $k$-ary theory. However, many examples of (strongly) $k$-distal theories are reducts of $k$-ary ones, and vice versa. The universal homogeneous kay-graph, which is (strongly) $k$-distal, was defined as a reduct of a $k$-ary theory; the Cherlin--Lachlan permutation structure, which is (strongly) 2-distal, is a reduct of a binary theory (see the proof of Proposition \ref{cherlinlachlan}); and even the Johnson graphs, which are (strongly) 2-distal, are reducts of binary theories (see \cite[Section 5.1]{bodirskybodor}). We do not imagine that every (strongly) $k$-distal theory is the reduct of a $k$-ary one, or vice versa, but we do not have counterexamples for either direction.
\subsection{Alternative proof of Fact \ref{artemfrancis}}\label{subsecalternativeproof}
We finish with our proof of Fact \ref{artemfrancis}, which says that $k$-total triviality and strong $(k+1)$-distality are equivalent among stable theories. The proof will go via a third condition that is, \textit{a priori}, a strengthening of strong $(k+1)$-distality. We restate Fact \ref{artemfrancis} to include this third condition.
\begin{prop}\label{equiv}
    Suppose $T$ is stable, and let $k\in\N^+$. The following are equivalent.
    \begin{enumerate}[(i)]
        \item The theory $T$ is $k$-totally trivial.
        \item The theory $T$ is strongly $(k+1)$-distal.
        \item Let $I$ be an infinite sequence of tuples from $\monster$, and let $b_1, ..., b_{k+1}$ be tuples from $\monster$. If $I$ is indiscernible over $b_{\neq m}$ for all $m\in [k+1]$, then $I$ is indiscernible over $b_1\cdots b_{k+1}$.
    \end{enumerate}
\end{prop}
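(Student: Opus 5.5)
I would prove the cycle (iii)$\Rightarrow$(ii)$\Rightarrow$(i)$\Rightarrow$(iii).

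\emph{(iii)$\Rightarrow$(ii).} This is immediate. If $I+a+J$ is indiscernible over $b_{\neq m}$ for all $m\in[k+1]$, then applying (iii) to the sequence $I+a+J$ shows that it is indiscernible over $b_1\cdots b_{k+1}$. The extra hypothesis in Definition \ref{defnstronglykdistal} that $I+J$ is $b_1\cdots b_{k+1}$-indiscernible is not even needed.

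\emph{(ii)$\Rightarrow$(i).} Suppose (i) fails. By Lemma \ref{lemmamodel}, we get a small model $M$ and tuples $a,b_1,\dots,b_{k+1}$ with $a\indep_M b_{\neq j}$ for all $j$ but $a\not\indep_M b_1\cdots b_{k+1}$. Let $p=\tp(a/M)$; it is stationary by Lemma \ref{typesovermodels}. Build a Morley sequence $(a_i:i\in\mathbb{Q})$ in $\globalext{p}$ over $M b_1\cdots b_{k+1}$, so each $a_i$ realises $\globalext{p}$ restricted to $M b_{1..k+1}(a_j)_{j<i}$.
\begin{itemize}
\item[] By Lemma \ref{stationaryinvariant} and stationarity, $I:=(a_i)$ is indiscernible over $M b_1\cdots b_{k+1}$.
\item[] Since $a\indep_M b_{\neq m}$ and $p$ is stationary, $\tp(a/Mb_{\neq m})=\globalext{p}\upharpoonright Mb_{\neq m}$. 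Moreover, by transitivity, $a\indep_M b_{\neq m}I$ holds exactly when $a$ realises the nonforking extension over $Mb_{\neq m}I$.
\end{itemize}
Here is the main obstacle. Naively inserting $a$ into $I$ will not keep indiscernibility over $b_{\neq m}$, since $a$ may fork with $I$ over $Mb_{\neq m}$. The fix is to choose $I$ independent from $a$ as well. Take $I$ Morley in $\globalext{p}$ over $Mab_1\cdots b_{k+1}$, cut it as $I=I_0+I_1$, and consider $I_0+a+I_1$. We want this to be indiscernible over $Mb_{\neq m}$. Symmetrically, reverse the roles: $a\indep_M b_{\neq m}$ and $I_1\indep_{M} a b$ give $a\indep_M b_{\neq m}I_1$. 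So $a$ together with $I_1$ forms a Morley sequence in $\globalext{p}$ over $Mb_{\neq m}$, in the order where $a$ comes after $I_1$.

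To match the orientation, I would index so that the later elements are generated first: take $I_1$ realised over $Mab$ and $I_0$ realised over $MabI_1$. Then, using stationarity (types over models in stable theories, where Morley sequences in a stationary type are totally indiscernible), $I_0+a+I_1$ is indiscernible over $Mb_{\neq m}$ for each $m$, and $I_0+I_1$ is indiscernible over $Mb_1\cdots b_{k+1}$. Strong $(k+1)$-distality then gives that $I_0+a+I_1$ is indiscernible over $Mb_1\cdots b_{k+1}$. Hence $a\equiv_{Mb}a_i$ for $a_i\in I$, and $a_i\indep_M b$, contradicting $a\not\indep_M b$. I expect this orientation and total-indiscernibility bookkeeping to be the delicate step.

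\emph{(i)$\Rightarrow$(iii).} Let $I$ be indiscernible over each $b_{\neq m}$; by compactness take $I$ of length $\omega+\omega$ (or large). Extract a small model $M$ such that $I$ is Morley over $M$ and indiscernible over $Mb_{\neq m}$. A standard approach is to let $M$ be the model generated from an initial segment $I_0$ of $I$ (after elongating $I$ at the start), with the rest $I'$ a Morley sequence over $M$ in the stationary limit type. Then $I'$ is indiscernible over $Mb_{\neq m}$, which in a stable theory means that each $c\in I'$ satisfies $c\indep_M b_{\neq m}$ relative to the limit type, after passing to the tail where the finitely many forking parameters have been absorbed. More precisely, I would use that for $e$ far enough along $I'$, $e\indep_{M(\text{earlier})}b_{\neq m}$, and rebase to $M'=M\cup\{\text{earlier}\}$. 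Then (i) gives $e\indep_{M'}b_1\cdots b_{k+1}$, so $\tp(e/M'b)$ is the nonforking extension of the limit type, independent of $e$. Hence the tail is indiscernible over $b$, and by indiscernibility over $b_{\neq m}$ and compactness the whole $I$ is too.
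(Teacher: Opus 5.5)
Your (iii)$\Rightarrow$(ii) and (ii)$\Rightarrow$(i) are correct and essentially the paper's argument. The paper simply takes $I\models q^{(\omega)}\downharpoonright_{abM}$ and notes that $a+I$ is a Morley sequence in $q$ over $b_{\neq j}M$, hence totally indiscernible, so your orientation bookkeeping is not needed.

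The gap is in (i)$\Rightarrow$(iii), at the step ``extract a small model $M$ such that $I$ is Morley over $M$ and indiscernible over $Mb_{\neq m}$''. In a general stable theory there is no ``model generated from $I_0$''. For an arbitrary model $M\supseteq I_0$, the tail $I'$ need not even be indiscernible over $M$: for instance, $M$ may contain, or be interalgebraic with, elements of $I'$. The natural repair is to choose $M\supseteq I_0$ with $M\indep_{I_0} I'b$. But then, for increasing tuples $\bar c,\bar c'$ from $I'$, you only know that they have the same type over $I_0b_{\neq m}$ and are independent from $M$ over it. Since $I_0b_{\neq m}$ is not a model, that type need not be stationary, so you cannot conclude $\bar c\equiv_{Mb_{\neq m}}\bar c'$. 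What is missing is exactly the strong-type argument: $\bar c\equiv^{\mathrm{stp}}_{I_0b_{\neq m}}\bar c'$ by indiscernibility (Lemma \ref{indiscstp}), followed by Fact \ref{stpstationary}. Your proposal supplies neither this nor any other substitute for stationarity over a non-model base, and that is the real content of this direction.

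Once that tool is available, the model is superfluous, and this is how the paper argues. It reduces to $a,a'\in I_1$ with $a\not\equiv_b a'$, where $I=I_0+I_1$. Both $a$ and $a'$ realise $\mathrm{Av}(I)\downharpoonright_{b_{\neq i}I_0}$, so $a,a'\indep_{I_0}b_{\neq i}$ for every $i$. Applying (i) over the base $I_0$ gives $a,a'\indep_{I_0}b$. Finally, $a\equiv^{\mathrm{stp}}_{I_0}a'$ together with Fact \ref{stpstationary} yields $a\equiv_{bI_0}a'$, a contradiction.

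Given a suitable $M$, the rest of your argument would go through. Rebasing to $Me_{<n}$ works because $\tp(e_n/Me_{<n})$ is stationary, being a non-forking extension of a type over a model. Your remarks about absorbing finitely many forking parameters and taking $e$ far along $I'$ are unnecessary: indiscernibility of $I'$ over $Mb_{\neq m}$ already gives $e_n\models\mathrm{Av}(I)\downharpoonright_{Mb_{\neq m}e_{<n}}$.

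Your final sentence is also not a valid inference as stated. A $b$-indiscernible tail does not make the whole sequence $b$-indiscernible, even if the whole sequence is indiscernible over each $b_{\neq m}$. Instead, place the cut before a finite witness of non-indiscernibility, or elongate at the start so that the original $I$ is itself the tail.
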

The inclusion of condition (iii) is not necessary for our proof of Fact \ref{artemfrancis} (that (i) and (ii) are equivalent): indeed, we shall prove that (i) implies (iii) --- which immediately implies (ii) --- and (ii) implies (i). Rather, we isolate this condition to highlight that in the stable context, the hypothesis that $I+J$ is indiscernible over $b_1\cdots b_{k+1}$ can be removed from Definition \ref{defnstronglykdistal}. 

Coincidentally, Chernikov and Westhead also state and prove the equivalence between these three conditions in \cite[Proposition 4.16]{artemfrancis}; in their proof, they show that conditions (i) and (ii) are respectively equivalent to condition (iii). In their paper, they study condition (iii) as a property in its own right, calling it \textit{indiscernible $k$-triviality}. In particular, they prove that (ii) implies (iii) in \textit{extensible resilient} theories, a generalisation of stable theories \cite[Lemma 4.14]{artemfrancis}.

We collect some ingredients for our proof of Proposition \ref{equiv}, beginning with average types.
\begin{prop}\label{Av(I)}
    Suppose $T$ is stable. Let $I$ be an infinite indiscernible sequence. Define the global partial type
    \[\text{Av}(I):=\{\phi(x;b): b\in\monster, \models \phi(a;b)\text{ for cofinitely many }a\in I\}.\]
    Then $\text{Av}(I)$ is a complete type, and for all infinite subsets $I_0$ of $I$, $\text{Av}(I)$ is definable over $I_0$.
\end{prop}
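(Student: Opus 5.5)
The plan is to use the fact that a stable theory is NIP, and to extract everything from that.

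First I will show that $\text{Av}(I)$ is a complete global type. Fix a formula $\phi(x;b)$. Because $T$ is NIP, there is some $n=n_\phi$ such that the truth value of $\phi(a;b)$, as $a$ ranges along any indiscernible sequence, changes at most $n$ times. Consequently either $\phi(a;b)$ holds for cofinitely many $a\in I$, or $\neg\phi(a;b)$ does. In fact, for an arbitrary infinite indiscernible sequence, the set of $a\in I$ where $\phi(a;b)$ fails is finite, of size less than $n$, in the stable case; I will get this from the order property. If infinitely many $a$ satisfied $\phi(a;b)$ and infinitely many satisfied $\neg\phi(a;b)$, then for a suitable linear ordering one could alternate truth values arbitrarily often, contradicting NIP. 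The cleaner route is to say that indiscernible sequences in stable theories are totally indiscernible, and then apply NIP to an arbitrary reordering. Once we know this, the set $\{a\in I:\models\phi(a;b)\}$ is finite or cofinite. Hence $\text{Av}(I)$ contains exactly one of $\phi(x;b)$ and $\neg\phi(x;b)$. It is consistent because every finite subset is realised by some element $a\in I$ far enough along, since a finite union of finite exceptional sets is finite.

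Second, I will prove definability over an infinite $I_0\subseteq I$. By total indiscernibility and the bound above, there is $N=N_\phi$ with the following property: for every $b$, the number of $a\in I$ on which $\phi(a;b)$ disagrees with the majority value is less than $N$. Choose distinct $a_1,\dots,a_{2N-1}\in I_0$. Then $\phi(x;b)\in\text{Av}(I)$ if and only if $\models\phi(a_i;b)$ for at least $N$ values of $i$. This majority condition is a formula $d\phi(y)$ with parameters $a_1,\dots,a_{2N-1}\subseteq I_0$, namely a disjunction over $N$-subsets of conjunctions. It defines $\text{Av}(I)$ restricted to $\phi$.

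The main obstacle is obtaining a uniform bound $N_\phi$ that holds for all $b$. I will argue by compactness. Suppose that for every $N$ there were some $b$ such that both $\phi(a;b)$ and $\neg\phi(a;b)$ hold for at least $N$ elements $a$ of $I$. Then, using total indiscernibility, we can produce, for each $N$, a $b$ that realises any prescribed pattern of truth values on $N$ elements. This gives the independence property for $\phi$, contradicting stability.

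To get the prescribed patterns, reorder the elements of $I$ so that the chosen $\phi$-true and $\phi$-false elements occupy the required positions, and then use indiscernibility of $I$ to transport $b$ accordingly. Total indiscernibility, which follows from stability, is exactly what allows this reordering.
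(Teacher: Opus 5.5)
Your proof is correct. It is the standard argument behind the result that the paper does not prove itself but cites from Tent--Ziegler: stability gives total indiscernibility of $I$, and total indiscernibility together with NIP yields a uniform bound $N_\phi$ on the smaller of $\{a\in I:\models\phi(a;b)\}$ and its complement. Your automorphism/shattering argument also gives this bound directly and needs no compactness. The majority vote over $2N_\phi-1$ elements of $I_0$ then defines $\text{Av}(I)$ restricted to $\phi$.

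The one slip is your early ``Consequently'', drawn from bounded alternation alone. Bounded alternation only gives eventual constancy, as in DLO, not a finite/cofinite split. You correctly repair this by passing to total indiscernibility, and that is exactly where stability, rather than NIP, is used.
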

\begin{proof}
    See \cite[Theorem 9.1.2]{tentziegler}.
\end{proof}
To simplify our proof, we will appeal to facts about strong types in stable theories; we thank Amador Martin-Pizarro for pointing these out to us. The first fact concerns the stationarity of strong types (Fact \ref{stpstationary}).
\begin{defn}
    For tuples $a,b\in\monster$ of length $l$ and a small set $D\subseteq \monster$, say that $a,b$ \textit{have the same strong type over $D$}, written $a\equiv^{\mathrm{stp}}_D b$, if for every finite $D$-definable equivalence relation $E$ between tuples of length $l$, $\models E(a,b)$.
\end{defn}
\begin{fact}\label{stpstationary}
    Suppose $T$ is stable. Let $a,b,c\in\monster$ be tuples and $D\subseteq \monster$ be a small set. If $a\indep_D c$, $b\indep_D c$, and $a\equiv^{\mathrm{stp}}_D b$, then $a\equiv_{Dc} b$.
\end{fact}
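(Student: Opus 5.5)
The plan is to reduce the statement to the stationarity of types over algebraically closed sets in $T^{\mathrm{eq}}$. Throughout, work in $\monster^{\mathrm{eq}}$ and write $E:=\mathrm{acl}^{\mathrm{eq}}(D)$.

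First, I will translate equality of strong types into equality of types over $E$. Every finite $D$-definable equivalence relation has its classes among the elements of $E$. Conversely, every element of $E$ lies in a finite $D$-invariant set, and so it determines a finite $D$-definable equivalence relation, namely whether two tuples are sent into the same conjugate. From this I will deduce the standard equivalence that $a\equiv^{\mathrm{stp}}_D b$ if and only if $a\equiv_E b$.

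Second, I will move the independence hypotheses from $D$ to $E$. Since $E$ is algebraic over $D$, we have $E\indep_D X$ for every $X$. Applying \textsc{Transitivity} gives $a\indep_D c$ if and only if $a\indep_E c$, and similarly for $b$. So $\tp(a/Ec)$ and $\tp(b/Ec)$ are both non-forking extensions of the single type $p:=\tp(a/E)=\tp(b/E)$.

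The main step, and the expected obstacle, is showing that $p$ is stationary. I would first try to quote this from \cite[Chapter 8]{tentziegler}; if it has to be derived, the argument goes as follows.
\begin{itemize}
\item By stability, every global non-forking extension $q$ of $p$ is definable. Each definition $d_q\phi(y)$ is definable almost over $E$, because $q$ has only boundedly many conjugates over $E$. Hence the canonical parameters of these definitions lie in $\mathrm{acl}^{\mathrm{eq}}(E)=E$.
\item Consequently every automorphism fixing $E$ fixes each such $q$.
\item Any two global non-forking extensions of $p$ are conjugate over $E$. To see this, first conjugate so that they agree on a model containing $E$ over which both do not fork, and then use that a non-forking extension over a model is unique (Lemma~\ref{typesovermodels}).
\item Combining the last two points, all global non-forking extensions of $p$ coincide, so $p$ is stationary.
\end{itemize}

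Finally, stationarity of $p$ says that $p$ has a unique non-forking extension over $Ec$. Both $\tp(a/Ec)$ and $\tp(b/Ec)$ are such extensions, so they are equal. Restricting to $Dc$ gives $a\equiv_{Dc} b$, which is the claim.
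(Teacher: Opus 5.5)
Your reduction is correct. It is the standard proof of the fact the paper simply cites from \cite[Proposition 9.19]{casanovas}: equality of strong types over $D$ is equality of types over $E=\mathrm{acl}^{\mathrm{eq}}(D)$, independence over $D$ agrees with independence over $E$, and types over $E$ are stationary, so quoting that last fact from Tent--Ziegler matches the paper. If you write out your fallback, the step that needs an actual idea is making the two extensions agree on a model in the third bullet: conjugate so that a single $c\models p$ satisfies $c\models q_1\downharpoonright_{N_1}$ and $c\models q_2\downharpoonright_{N_2}$ for models $N_1,N_2\supseteq D$, then move $N_2$ over $Ec$ so that $N_2\indep_{Ec}N_1$, whence $c\indep_E N_1N_2$ and any global non-forking extension of $\tp(c/N_1N_2)$ equals both $q_1$ and $q_2$ by Lemma~\ref{typesovermodels} (the same lemma, applied to one fixed model, also gives the bounded number of conjugates your first bullet relies on).
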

\begin{proof}
    See \cite[Proposition 9.19]{casanovas}.
\end{proof}
We will also need the following lemma.
\begin{lemma}\label{indiscstp}
    Let $D$ be a small set and $I$ be a $D$-indiscernible sequence. Then, for all $a,b\in I$, $a\equiv^{\mathrm{stp}}_D b$.
\end{lemma}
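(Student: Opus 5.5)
Fix a finite $D$-definable equivalence relation $E$, i.e.\ one with finitely many classes, on tuples of the appropriate length. We must show $\models E(a,b)$ for all $a,b\in I$. We argue by contradiction using indiscernibility and the pigeonhole principle; no stability assumption is needed.

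Suppose $\models\neg E(a,b)$ for some $a,b\in I$. Replacing the pair by $(b,a)$ if necessary and using symmetry of $E$, we may assume $a<b$ in the order of $I$. Since $I$ is $D$-indiscernible and $E$ is an $L(D)$-formula, every pair $a'<b'$ from $I$ has the same type over $D$ as $(a,b)$. Hence $\models\neg E(a',b')$ for all distinct $a',b'\in I$. In other words, the elements of $I$ lie in pairwise distinct $E$-classes.

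Now $I$ is infinite, and in any case has more than $n$ elements, where $n$ is the (finite) number of $E$-classes. This contradicts the pigeonhole principle. Hence $\models E(a,b)$ for all $a,b\in I$, and as $E$ was arbitrary, $a\equiv^{\mathrm{stp}}_D b$.

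The only point needing care is the reduction to the ordered case. Indiscernibility only transfers the type of increasing tuples, so we use the symmetry of $E$ to order the pair. Should $I$ be allowed to be finite, it would have to have at least $n+1$ elements for the argument to apply. In this paper, however, indiscernible sequences are infinite, so the argument is complete.
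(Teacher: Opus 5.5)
Your proof is correct and is essentially the paper's argument: pigeonhole on the finitely many $E$-classes forces some pair of distinct elements of $I$ to be $E$-related, and $D$-indiscernibility then transfers this to the pair $(a,b)$. You phrase it by contradiction and make explicit the use of the symmetry of $E$ to order the pair, which the paper leaves implicit.
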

\begin{proof}
    Fix $a,b\in I$; without loss of generality, suppose $a\neq b$. Let $E$ be a finite $D$-definable equivalence relation between tuples of length $|a|$. Since $E$ has finitely many equivalence classes, there are distinct $a',b'\in I$ such that $\models E(a',b')$. But now, by $D$-indiscernibility, $\models E(a,b)$.
\end{proof}
For a global type $q$, a sequence $I=(a_n: n<\omega)$ of tuples from $\monster$, and $D\subseteq\monster$, write $I\models q^{(\omega)}\downharpoonright_D$ if $a_n\models q\downharpoonright_{Da_{<n}}$ for all $n<\omega$. Observe that if $q$ is $D$-invariant, then such $I$ is $D$-indiscernible.

We are now ready for our proof.
\begin{proof}[Proof of Proposition \ref{equiv}]
    We will prove the implications (iii) $\Rightarrow$ (ii) $\Rightarrow$ (i) $\Rightarrow$ (iii). Note that (iii) $\Rightarrow$ (ii) is immediate.

    Let us prove (ii) $\Rightarrow$ (i). Suppose $T$ is not $k$-totally trivial. By Lemma \ref{lemmamodel}, there are tuples $a, b_1, ..., b_{k+1}$ and a small model $M\models T$, such that $a\indep_M b_{\neq j}$ for all $j\in [k+1]$ but $a\not\indep_M b_1\cdots b_{k+1}$; write $b:=(b_1, ..., b_{k+1})$. By Lemma \ref{typesovermodels}, $\tp(a/M)$ is stationary; let $q$ be its unique global non-forking extension, which is $M$-invariant by Lemma \ref{stationaryinvariant}. By stationarity, we have $a\models q\downharpoonright_{b_{\neq j}M}$ for all $j\in [k+1]$. Take $I\models q^{(\omega)}\downharpoonright_{abM}$; then, $a+I$ is $b_{\neq j}M$-indiscernible for all $j\in [k+1]$. Fix any partition of $I$ into infinite subsequences $I_0$ and $I_1$. By total indiscernibility, $I_0+a+I_1$ is $b_{\neq j}M$-indiscernible for all $j\in [k+1]$, and so is $bM$-indiscernible as $T$ is strongly $(k+1)$-distal. But then $a\models q\downharpoonright_{bM}$, contradicting the fact that $a\not\indep_M b$.
    
    Finally, let us prove (i) $\Rightarrow$ (iii). Suppose (iii) fails. Then there is an infinite sequence $I$ without endpoints and tuples $b_1, ..., b_{k+1}$ such that, writing $b:=(b_1, ..., b_{k+1})$, $I$ is $b_{\neq i}$-indiscernible for all $i\in [k+1]$ but not $b$-indiscernible. Replacing $I$ with an infinite sequence of tuples from $I$ if necessary, we may assume that there are elements $a,a'\in I$ such that $a\not\equiv_b a'$.
    
    Write $I=I_0+I_1$ for infinite subsequences $I_0$ and $I_1$ such that $a,a'\in I_1$; note that $I_1$ is $I_0$-indiscernible, so $a\equiv^{\mathrm{stp}}_{I_0} a'$ by Lemma \ref{indiscstp}. By Proposition \ref{Av(I)}, $p:=\text{Av}(I)$ is definable over $I_0$, and hence $p$ does not fork over $I_0$ (by an easy exercise, or by \cite[Lemma 8.3.5]{tentziegler}). For all $i\in [k+1]$, since $I_1$ is $b_{\neq i}I_0$-indiscernible, we have that $a,a'\models p\downharpoonright_{b_{\neq i}I_0}$, so $a\indep_{I_0} b_{\neq i}$ and $a'\indep_{I_0} b_{\neq i}$. Since $T$ is $k$-totally trivial, $a\indep_{I_0} b$ and $a'\indep_{I_0} b$. Since $a\equiv^{\mathrm{stp}}_{I_0} a'$, we have by Fact \ref{stpstationary} that $a\equiv_{bI_0} a'$, which is a contradiction.
\end{proof}
\bibliography{bib}
\bibliographystyle{plainurl}
\end{document}